\patchcmd{\subsubsection}{\itshape}{\bfseries}{}{}
\newcommand{\diff}{\mathop{}\mathopen{}\mathrm{d}}
\newcommand{\prob}{\mathrm{Prob}}
\newcommand{\Q}{\mathbb{Q}}
\newcommand{\R}{\mathbb{R}}
\newcommand{\N}{\mathbb{N}}
\newcommand{\indic}{\mathbf{1}}
\newcommand{\deb}{\rightharpoonup}
\newcommand{\mres}{\mathop{\hbox{\vrule height 6pt width .5pt depth 0pt \vrule height .5pt width 4pt depth 0pt}}\nolimits}
\newcommand{\eps}{\varepsilon}
\DeclareMathOperator*{\supp}{supp}
\DeclareMathOperator*{\diam}{diam}
\DeclareMathOperator{\hdm}{\mathcal{H}}
\DeclareMathOperator{\lbm}{\mathcal{L}}
\DeclareMathOperator{\prox}{prox}
\DeclareMathOperator*{\argmin}{arg min}
\DeclarePairedDelimiter{\abs}{\lvert}{\rvert}
\DeclarePairedDelimiter{\norm}{\lVert}{\rVert}
\numberwithin{equation}{section}
\theoremstyle{plain}
\newtheorem{thm}{Theorem}[section]
\newtheorem{lem}[thm]{Lemma}
\newtheorem{prop}[thm]{Proposition}
\newtheorem{conj}[thm]{Conjecture}
\theoremstyle{definition}
\newtheorem{defn}{Definition}[section]
\newtheorem*{pjgm}{The projected gradient method}
\newtheorem*{pxgm}{The proximal gradient method}
\newtheorem*{tpgm}{A \enquote{twisted} proximal gradient method}
\newtheorem*{plbfgs}{The projected L-BFGS method}
\theoremstyle{remark}
\newtheorem{rem}{Remark}[section]
\begin{document}

\title{A fractal shape optimization problem in branched transport}
\author{Paul Pegon$^\dagger$}
\email[bli]{paul.pegon@math.u-psud.fr}
\author{Filippo Santambrogio$^\dagger$}
\email{filippo.santambrogio@math.u-psud.fr}
\address[$\dagger$]{Laboratoire de Math\'ematiques d'Orsay, Univ. Paris-Sud, CNRS, Universit\'e Paris-Saclay, 91405 Orsay cedex, France}
\author{Qinglan Xia$^\ddagger$}
\email{qlxia@math.ucdavis.edu}
\address[$\ddagger$]{Department of Mathematics, UC Davis, One Shields Ave, Davis, CA 95616, United States}

\begin{abstract}
We investigate the following question: what is the set of unit volume which can be best irrigated starting from a single source at the origin, in the sense of branched transport? We may formulate this question as a shape optimization problem and prove existence of solutions, which can be considered as a sort of \enquote{unit ball} for branched transport. We establish some elementary properties of optimizers and describe these optimal sets $A$ as sublevel sets of a so-called landscape function which is now classical in branched transport. We prove $\beta$-H\"older regularity of the landscape function, allowing us to get an upper bound on the Minkowski dimension of the boundary: $\overline{\dim}_M \partial A \leq d-\beta$ (where $\beta \coloneqq d(\alpha-(1-1/d))\in (0,1)$ is a relevant exponent in branched transport, associated with the exponent $\alpha>1-1/d$ appearing in the cost). We are not able to prove the lower bound, but we conjecture that $\partial A$ is of non-integer dimension $d-\beta$. Finally, we make an attempt to compute numerically an optimal shape, using an adaptation of the phase-field approximation of branched transport introduced some years ago by Oudet and the second author.
\end{abstract}

\keywords{branched transport ; landscape function ; fractal dimension ; Morrey-Campanato spaces ; phase-field approximation, non-smooth optimization}
\subjclass[2010]{49Q10 ; 49N60 ; 65K10 ; 28A80}

\maketitle

\tableofcontents

\section*{Introduction}

Given two probability measures $\mu,\nu$ on $\R^d$, a classical optimization problem amounts to finding a connection between the two measures which has minimal cost. In branched transport, such a connection will be performed along a $1$-dimensional structure such that the cost for moving a mass $m$ at distance $\ell$ is proportional to $m^\alpha \times \ell$ where $\alpha$ is some concave exponent $\alpha \in [0,1]$. The map $t \mapsto t^\alpha$ being subadditive (even strictly subadditive for $\alpha < 1$), that is to say $(a+b)^\alpha \leq a^\alpha + b^\alpha$, it is cheaper for masses to travel together as much as possible. Consequently, the optimal connections exhibit branching structures: for instance, if one wishes to transport one Dirac mass to two Dirac masses of mass $1/2$, the optimal graph will be $Y$-shaped.

A early model has been proposed by Gilbert in \cite{Gil} as an extension of Steiner problem (see \cite{GilPol}) in a discrete setting, where the connection between two atomic measures is made through weighted oriented graphs. There are two main extensions of this model to a continuous setting, i.e. with arbitrary probability measures. The first one was introduced in 2003 by the third author in \cite{Xia03} and can be viewed as a Eulerian model. It is based on vector measures and roughly reads as:
\[\min \quad \left\{ \int \abs*{\frac{dv}{\diff\!\hdm^1}(x)}^\alpha \diff\hdm^1(x) : \nabla \cdot v  = \mu - \nu \right\},\]
minimizing among vector measures which have an $\hdm^1$-density. A Lagrangian model was introduced essentially at the same time by and Maddalena, Solimini, Morel \cite{MadSolMor}, and then intensively studied by Bernot, Caselles, Morel \cite{BerCasMor05} . It is based on measures on a set of curves, but the description of this model, which is a little more involved, is given in Section \ref{sec:prelim}. An almost up-to-date reference on branched transport resides in the book by the same authors \cite{BerCasMor}.

Looking at the optimal branching structures computed numerically in \cite{OudSan} (in some non-atomic cases), or at natural drainage networks and their irrigations basins, one is tempted to describe them as fractal (see \cite{RodRin}). Actually, even though the underlying network has infinitely many branching points, it is stil a $1$-rectifiable set, hence it is not clear in what sense fractality appears. Fractality is a notion which usually relates either to self-similarity properties of non-smooth objects, or to non-integer dimension of sets. A first rigorous result which would fall in the first category is proven by Brancolini and Solimini in \cite{BraSol14}: for sufficiently diffuse measures (for example the Lebesgue measure restricted to a Lipschitz open set), the number of branches of length $\sim \eps$ stemming from a branch of length $\ell$ is of order $\ell/\eps$. This may read as a self-similarity property since in a way the total length is preserved when looking at subbranches at all scales.

The present paper leans towards the other notion of fractality, that is towards \enquote{fractal} dimension. Some sets in branched transport have already been proposed as candidates to exhibit non-integer dimension, for instance the boundary of adjacent irrigation basins (an open conjecture by J.-M. Morel). Here we are interested in another candidate which is related to branched transport: the boundary of what we call unit balls for branched transport. With the results of the present paper, we can only prove an upper bound on the dimension, which is non-integer, and conjecture that this upper bound is actually sharp.

The article is divided into five parts. In a preliminary section we define properly the Lagrangian framework of branched transport and its basic features, and we formulate our question as a shape optimization problem involving the irrigation distance. Section \ref{sec:existence} is devoted to the proof of existence of minimizers and to elementary properties of minimizers. In Section \ref{sec:hölder} we prove the $\beta$-Hölder regularity of the landscape function, which appears in the description of optimizers, and use it to derive an upper bound on the Minkowski dimension of the boundary of the optimizers in Section \ref{sec:4}. The final section is an attempt at computing optimizers numerically, which is particularly useful due to the fact that we are not fully able to answer theoretically the question of the fractal behavior of the boundary. This is done by adapting the Modica-Mortola approach introduced by \cite{OudSan}, and allows to provide some convincing computer visualizations.

\section{Preliminaries}\label{sec:prelim}

As preliminaries, we quickly set the Lagrangian framework of branched transport and its main features. For more details, we refer to the book \cite{BerCasMor} or to \cite[Sections 1--2]{Peg} for a simpler exposition.

\subsection{The irrigation problem}

We denote by $\Gamma(\R^d)$ the set of $1$-Lipschitz curves in $\R^d$ parameterized on $[0,\infty]$, endowed with the topology of uniform convergence on compact sets.

\subsubsection*{Irrigation plans}  We call \emph{irrigation plan} any probability measure $\eta \in \prob({\Gamma})$ satisfying the following finite-length condition
\begin{equation}\label{fin_len}
\mathbf{L}(\eta) \coloneqq \int_\Gamma L(\gamma) \:\diff \eta(\gamma) < +\infty,
\end{equation}
where $L(\gamma) = \int_0^\infty \abs{\dot{\gamma}(t)} \diff t$. Notice that any irrigation plan is concentrated on $\Gamma^1(\R^d) \coloneqq \{ \gamma : L(\gamma) < \infty\}$. We denote by $\mathrm{IP}(\R^d)$ the set of all irrigation plans $\eta\in \prob({\Gamma})$. If $\mu$ and $\nu$ are two probability measures on $\R^d$, one says that $\eta \in \mathrm{IP}(\R^d)$ irrigates $\nu$ from $\mu$ if one recovers the measures $\mu$ and $\nu$ by sending the mass of each curve respectively to its initial point and to its final point, which means that
\[
(\pi_0)_\#\eta = \mu \text{ and }
(\pi_\infty)_\#\eta = \nu,
\]
where $\pi_0(\gamma) = \gamma(0)$, $\pi_\infty(\gamma) = \gamma(\infty) \coloneqq \lim_{t \to +\infty} \gamma(t)$ and $f_\#\eta$ denotes the push-forward of $\eta$ by $f$ whenever $f$ is a Borel map\footnote{Notice that $\lim_{t\to\infty} \gamma(t)$ exists if $\gamma \in \Gamma^1(K)$, and this is all we need since any irrigation plan is concentrated on $\Gamma^1(K)$.}. We denote by $\mathrm{IP}(\mu,\nu)$ the set of irrigation plans irrigating $\nu$ from $\mu$:
\[\mathrm{IP}(\mu,\nu) = \{ \eta \in \mathrm{IP}(\R^d) : (\pi_0)_\#\eta = \mu, (\pi_\infty)_\#\eta = \nu\}.\]
If $\eta$ is a given irrigation plan, we define the multiplicity at $x$, that is the total mass passing by $x$, as
\[\theta_\eta(x) = \eta(\{\gamma \in \Gamma : x \in \gamma\}),\]
where $x\in \gamma$ means that $x$ belongs to the image of the curve $\gamma$. Finally, for any nonnegative function $f$, we denote by $\int_\gamma f(x) \diff x$ the line integral of $f$ along $\gamma \in \Gamma$:
\[\int_\gamma f(x) \diff x \coloneqq \int_0^{+\infty} f(\gamma(t)) \abs{\dot{\gamma}(t)} \diff t.\]

\subsubsection*{Irrigation costs}
For $\alpha \in [0,1]$ we consider the \emph{irrigation cost} $\mathbf{I}_\alpha : \mathrm{IP}(\R^d) \to [0,\infty]$ defined by
\[\mathbf{I}_\alpha(\eta) \coloneqq \int_\Gamma \int_\gamma \theta_\eta(x)^{\alpha -1} \diff x\diff\eta(\gamma),\]
with the conventions $0^{\alpha -1} = \infty$ if $\alpha <1$, $0^{\alpha -1} = 1$ otherwise, and $\infty \times 0 = 0$. If $\mu,\nu$ are two probability measures on $\R^d$, the irrigation (or branched transport) problem consists in minimizing the cost $\mathbf{I}_\alpha$ on the set of irrigation plans which send $\mu$ to $\nu$, which reads
\begin{equation}\label{lag_pb}\tag{$\text{LI}_\alpha$}
\min_{\eta \in \mathrm{IP}(\mu,\nu)}\quad \int_\Gamma \int_\gamma \theta_\eta(x)^{\alpha -1} \diff x \diff\eta(\gamma).
\end{equation}
We set $Z_\eta(\gamma) = \int_\gamma \theta_\eta(x)^{\alpha-1} \diff x$ so that the cost may expressed as
\[\mathbf{I}_\alpha(\eta) = \int_\Gamma Z_\eta(\gamma) \diff\eta(\gamma).\]

The following results are extracted from \cite{BerCasMor05,Peg,Xia03}.

\begin{prop}[First variation inequality for $\mathbf{I}_\alpha$]
If $\eta$ is an irrigation plan with $\mathbf{I}_\alpha(\eta)$ finite, then for all irrigation plan $\tilde{\eta}$ the following holds:
\begin{equation}
\mathbf{I}_\alpha(\tilde{\eta}) \leq \mathbf{I}_\alpha(\eta) + \alpha \int Z_\eta(\gamma) \diff(\tilde{\eta}-\eta).
\end{equation}
Notice that the integral $\int Z_\eta \diff(\tilde{\eta}-\eta)$ is well-defined since $\int Z_\eta \diff\eta < \infty$ and $Z_\eta$ is nonnegative, though it may be infinite.
\end{prop}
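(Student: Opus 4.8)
The plan is to view this as a \emph{concavity-type} first variation inequality, whose only nontrivial ingredient is the concavity of $t\mapsto t^\alpha$ on $[0,\infty)$. Concretely, I would start from the tangent-line bound for this concave function, which (with the conventions in force) reads
\[
b^\alpha\le(1-\alpha)\,a^\alpha+\alpha\,a^{\alpha-1}b\qquad\text{for all }a,b\ge0;
\]
here, when $a=0$ and $\alpha<1$, the right-hand side equals $+\infty$ unless also $b=0$, in which case both sides vanish. The point is to apply this pointwise in $x$ with $a=\theta_\eta(x)$ and $b=\theta_{\tilde\eta}(x)$, and then integrate against the one-dimensional measure naturally carried by $\eta$.

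First I would discard the trivial case $\int Z_\eta\diff\tilde\eta=+\infty$: since $\mathbf{I}_\alpha(\eta)=\int Z_\eta\diff\eta<+\infty$ by hypothesis, the right-hand side of the asserted inequality is then $+\infty$ and there is nothing to prove; so assume $\int Z_\eta\diff\tilde\eta<+\infty$. Next I would recall the standard ``Eulerian'' description of the irrigation cost (classical in this setting, see \cite{BerCasMor05,Peg,Xia03}): the set $S_\eta\coloneqq\{\theta_\eta>0\}$ is $1$-rectifiable and one has
\[
\mathbf{I}_\alpha(\eta)=\int_{S_\eta}\theta_\eta^\alpha\diff\hdm^1,\qquad
\int Z_\eta\diff\tilde\eta=\int_{\R^d}\theta_\eta^{\alpha-1}\diff\mu_{\tilde\eta}=\int_{S_{\tilde\eta}}\theta_\eta^{\alpha-1}\,\theta_{\tilde\eta}\diff\hdm^1,
\]
where $\mu_{\tilde\eta}$ is the one-dimensional mass distribution attached to $\tilde\eta$ and the last two equalities come from Fubini. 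Since this integral is finite and $\alpha-1<0$, necessarily $\theta_\eta>0$ $\hdm^1$-a.e.\ on $S_{\tilde\eta}$, i.e.\ $\hdm^1(S_{\tilde\eta}\setminus S_\eta)=0$, so all the integrals above may be taken over $S_\eta$ (extending $\theta_{\tilde\eta}$ by $0$ there).

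It then remains to integrate the pointwise inequality $\theta_{\tilde\eta}^\alpha\le(1-\alpha)\theta_\eta^\alpha+\alpha\,\theta_\eta^{\alpha-1}\theta_{\tilde\eta}$ over $S_\eta$ with respect to $\hdm^1$ — on $S_\eta$ we have $\theta_\eta>0$, so every term is an honest nonnegative number. In view of the representations above this yields $\mathbf{I}_\alpha(\tilde\eta)\le(1-\alpha)\,\mathbf{I}_\alpha(\eta)+\alpha\int Z_\eta\diff\tilde\eta$, and substituting $(1-\alpha)\mathbf{I}_\alpha(\eta)=\mathbf{I}_\alpha(\eta)-\alpha\int Z_\eta\diff\eta$ gives precisely $\mathbf{I}_\alpha(\tilde\eta)\le\mathbf{I}_\alpha(\eta)+\alpha\int Z_\eta\diff(\tilde\eta-\eta)$.

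The step I expect to be the genuine obstacle — in fact the only place where anything beyond bookkeeping happens — is justifying the ``Eulerian'' identity $\mathbf{I}_\alpha(\eta)=\int_{S_\eta}\theta_\eta^\alpha\diff\hdm^1$ and its Fubini companion. This requires knowing that an irrigation plan of finite cost is concentrated on a $1$-rectifiable set, that one may work with a parametrization for which the line integrals $\int_\gamma(\cdot)\diff x$ agree with $\hdm^1$-integration on $S_\eta$ weighted by the multiplicity $\theta$, and a bit of care on the $\hdm^1$-null sets where $\theta_\eta$ or $\theta_{\tilde\eta}$ vanishes (the negative exponent $\alpha-1$ makes this delicate). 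All of this is classical in branched transport — it is exactly what is being invoked by citing \cite{BerCasMor05,Peg,Xia03} — and once it is in hand the remainder is simply the concavity inequality plus Fubini.
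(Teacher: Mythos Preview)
The paper does not give its own proof of this proposition: it is stated among the results ``extracted from \cite{BerCasMor05,Peg,Xia03}'' and is used as a black box (notably in the proof of Proposition~\ref{prop:first_var}). Your outline --- the pointwise tangent-line inequality $b^\alpha\le(1-\alpha)a^\alpha+\alpha a^{\alpha-1}b$ coming from the concavity of $t\mapsto t^\alpha$, applied with $a=\theta_\eta(x)$ and $b=\theta_{\tilde\eta}(x)$ and then integrated via the Fubini/Eulerian representation $\mathbf{I}_\alpha(\eta)=\int_{S_\eta}\theta_\eta^\alpha\diff\hdm^1$ --- is exactly the argument one finds in those references. You have also correctly isolated the only substantive step: the rectifiability of $S_\eta$ and the passage from the Lagrangian definition of $\mathbf{I}_\alpha$ to the $\hdm^1$-integral of $\theta^\alpha$, which is indeed the technical input one borrows from \cite{BerCasMor05,Peg,Xia03}; once that is granted, the rest is, as you say, bookkeeping.
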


\begin{thm}[Existence of minimizers,]
For any pair of probability measures $\mu,\nu \in \prob(\R^d)$ which have compact support, the problem \eqref{lag_pb} admits a minimizer.
\end{thm}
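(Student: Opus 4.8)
The plan is to prove existence by the direct method of the calculus of variations, exploiting the topological structure on $\Gamma(\R^d)$ already set up in the preliminaries. First I would fix a minimizing sequence $(\eta_n) \subset \mathrm{IP}(\mu,\nu)$, so that $\mathbf{I}_\alpha(\eta_n) \to \inf$; since $\mathrm{IP}(\mu,\nu)$ is nonempty (one can, e.g., take an optimal transport plan between $\mu$ and $\nu$ and join points by segments, which has finite cost for $\alpha$ close enough to $1$, or more safely invoke that the infimum is finite because the problem admits at least one competitor of finite cost when $\mu,\nu$ have compact support), we may assume $\sup_n \mathbf{I}_\alpha(\eta_n) < \infty$. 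The key compactness input is that the maps $\eta \mapsto (\pi_0)_\#\eta$, $(\pi_\infty)_\#\eta$ and $\eta \mapsto \mathbf{L}(\eta)$ control tightness: since $\mu,\nu$ are compactly supported, say in $B_R$, and since $\mathbf{I}_\alpha(\eta_n)$ bounded gives a uniform bound on $\mathbf{L}(\eta_n)$ (using $\theta_{\eta_n} \leq 1$, so $Z_{\eta_n}(\gamma) \geq L(\gamma)$ when $\alpha \leq 1$, hence $\mathbf{L}(\eta_n) \leq \mathbf{I}_\alpha(\eta_n)$), the curves are uniformly ``short'' in an averaged sense and start in $B_R$, so they stay, on average, in a bounded region. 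A Markov/Chebyshev argument then shows that for every $\delta > 0$ there is a compact set of curves $K_\delta \subset \Gamma^1(\R^d)$ (curves of length $\leq M/\delta$ starting in $B_R$, which is compact in the topology of uniform convergence on compacts by Arzelà–Ascoli, the $1$-Lipschitz bound being automatic) with $\eta_n(K_\delta) \geq 1 - \delta$ for all $n$. Hence $(\eta_n)$ is tight, and by Prokhorov we extract a subsequence $\eta_n \rightharpoonup \eta$ narrowly.

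Next I would check that the limit $\eta$ is admissible: $\eta \in \mathrm{IP}(\mu,\nu)$. Narrow convergence plus continuity of $\pi_0$ and of $\pi_\infty$ on the relevant set of curves gives $(\pi_0)_\#\eta_n \rightharpoonup (\pi_0)_\#\eta$ and likewise for $\pi_\infty$; since $(\pi_0)_\#\eta_n = \mu$ and $(\pi_\infty)_\#\eta_n = \nu$ for all $n$, the limits are $\mu$ and $\nu$. A mild subtlety is that $\pi_\infty$ is only defined/continuous on $\Gamma^1$ (finite-length curves), where the limit $\gamma(\infty)$ exists; one handles this by working on the compact sets $K_\delta$ above, on which $L$ is bounded and $\pi_\infty$ is continuous, and letting $\delta \to 0$, or by noting that the length functional's lower semicontinuity forces $\eta$ to be concentrated on $\Gamma^1$ with $\mathbf{L}(\eta) < \infty$. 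Finiteness of $\mathbf{L}(\eta)$ also follows from lower semicontinuity of $\gamma \mapsto L(\gamma)$ for uniform-on-compacts convergence together with the uniform bound $\sup_n \mathbf{L}(\eta_n) < \infty$ via Fatou.

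The heart of the proof is lower semicontinuity of $\mathbf{I}_\alpha$ along the sequence, i.e. $\mathbf{I}_\alpha(\eta) \leq \liminf_n \mathbf{I}_\alpha(\eta_n)$. Here I expect the main obstacle to lie, because $\theta_\eta(x)$ depends on $\eta$ in a way that is not continuous under narrow convergence: multiplicities can only jump down in the limit (mass that travelled together may spread out, but mass cannot spontaneously concentrate), so one needs the correct direction of semicontinuity for the integrand $\theta^{\alpha-1}$, which is decreasing in $\theta$. The standard route (as in \cite{MadSolMor,BerCasMor05}) is to first establish an upper semicontinuity statement for the multiplicity, namely that along a narrowly convergent sequence, for $\eta$-a.e. curve $\gamma$ and a.e. point $x$ on it, $\theta_\eta(x) \geq \limsup$ of an appropriate lower-approximation of $\theta_{\eta_n}$, or more precisely to use the representation of $\mathbf{I}_\alpha$ via a supremum of continuous functionals, or to invoke the known lower semicontinuity of $\mathbf{I}_\alpha$ with respect to narrow convergence of irrigation plans having equibounded length (Proposition/Theorem in \cite{BerCasMor05} or \cite{Peg}). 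Concretely I would: (i) reduce to curves parametrized by arc length on a common compact interval $[0,M/\delta]$ on $K_\delta$; (ii) use that for fixed $t$, the evaluation $\gamma \mapsto \gamma(t)$ is continuous, so $(e_t)_\#\eta_n \rightharpoonup (e_t)_\#\eta$, controlling how mass distributes in space; (iii) combine with Fatou in the $(t,\eta)$ integration and the superadditivity/concavity of $\theta \mapsto \theta^\alpha$ to pass to the liminf. Since the statement is explicitly attributed to \cite{BerCasMor05,Peg,Xia03}, in the write-up I would cite the semicontinuity of $\mathbf{I}_\alpha$ from there rather than reprove it, and the novel content reduces to the tightness/compactness and admissibility-of-the-limit steps sketched above; combining lower semicontinuity with $\eta \in \mathrm{IP}(\mu,\nu)$ yields $\mathbf{I}_\alpha(\eta) \leq \inf$, so $\eta$ is a minimizer.
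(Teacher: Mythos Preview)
The paper does not actually prove this theorem: it is stated in the preliminary section as one of several results ``extracted from \cite{BerCasMor05,Peg,Xia03}'', with no argument given. Your proposal is essentially the standard direct-method proof found in those references (tightness from the length bound $\mathbf{L}(\eta_n)\leq \mathbf{I}_\alpha(\eta_n)$ since $\theta_{\eta_n}\leq 1$, Prokhorov compactness, admissibility of the limit, and lower semicontinuity of $\mathbf{I}_\alpha$), and you correctly recognize that the semicontinuity step is the nontrivial ingredient best cited from the literature. One small point: your opening assumes $\sup_n \mathbf{I}_\alpha(\eta_n)<\infty$, which presupposes the infimum is finite; to cover the trivial case where the infimum is $+\infty$ you should separately observe that $\mathrm{IP}(\mu,\nu)$ is nonempty (e.g.\ segments through a fixed point give a plan with $\mathbf{L}<\infty$ since $\mu,\nu$ are compactly supported), so that any such plan is then a minimizer.
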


\begin{thm}[Irrigability]
If $1- \frac{1}{d} < \alpha < 1$, for any $\mu,\nu \in \prob(\R^d)$ with compact support there exists some $\eta \in \mathrm{IP}(\mu,\nu)$ such that $\mathbf{I}_\alpha(\eta)$ is finite.
\end{thm}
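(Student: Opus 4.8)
The plan is to reduce the problem to the case of a single source and then to build an explicit dyadic irrigation plan whose cost is a geometric series which converges precisely when $\alpha > 1 - 1/d$. \emph{Reduction to a single source.} Fix any $p \in \R^d$; it suffices to show that for every $\rho \in \prob(\R^d)$ with compact support there is $\eta_\rho \in \mathrm{IP}(\delta_p, \rho)$ with $\mathbf{I}_\alpha(\eta_\rho) < \infty$. Granting this, apply it to $\rho = \mu$ and to $\rho = \nu$; reversing the (arclength-parameterized, eventually constant) curves of $\eta_\mu$ produces $\hat\eta_\mu \in \mathrm{IP}(\mu, \delta_p)$ with the same cost, since $\mathbf{I}_\alpha$ is insensitive to orientation; finally glue, letting $\eta$ be the push-forward of $\hat\eta_\mu \otimes \eta_\nu$ by the concatenation map $(\gamma,\gamma') \mapsto \gamma\cdot\gamma'$ (run $\gamma$, then $\gamma'$, reparameterized with speed $\le 1$), which is licit since $\hat\eta_\mu$-a.e.\ $\gamma$ ends at $p$ and $\eta_\nu$-a.e.\ $\gamma'$ starts at $p$. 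Then $(\pi_0)_\#\eta = \mu$ and $(\pi_\infty)_\#\eta = \nu$, so $\eta \in \mathrm{IP}(\mu,\nu)$; and a point on the $\gamma$-part of $\gamma\cdot\gamma'$ is met by at least all the $\hat\eta_\mu$-curves through it (likewise on the $\gamma'$-part), so multiplicities only grow, $\theta^{\alpha-1}$ only shrinks, $Z_\eta(\gamma\cdot\gamma') \le Z_{\hat\eta_\mu}(\gamma) + Z_{\eta_\nu}(\gamma')$, and $\mathbf{I}_\alpha(\eta) \le \mathbf{I}_\alpha(\hat\eta_\mu) + \mathbf{I}_\alpha(\eta_\nu) < \infty$.

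\emph{The dyadic plan.} Enclose $\supp\rho$ in a half-open cube $Q_0$ of side $R$, and for $k \ge 0$ let $\mathcal{D}_k$ denote its $2^{dk}$ half-open dyadic subcubes of side $R\,2^{-k}$, so that each $y \in Q_0$ lies in a unique decreasing chain $Q_0 \supseteq Q^1(y) \supseteq Q^2(y) \supseteq \cdots$ with $y \in Q^k(y) \in \mathcal{D}_k$, whose centers $c_k(y)$ converge to $y$. Let $\gamma_y$ be the curve joining $p$, then the center $c_0$ of $Q_0$, then $c_1(y), c_2(y), \dots$ along straight segments, parameterized by arclength and then held constant: its length is at most $|p - c_0| + \sum_{k \ge 1}\tfrac{\sqrt d}{2}R\,2^{-k} < \infty$, so $\gamma_y \in \Gamma^1(\R^d)$, $\gamma_y(\infty) = y$, and $y \mapsto \gamma_y$ is Borel. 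Setting $\eta_\rho \coloneqq \Phi_\#\rho$ with $\Phi(y) = \gamma_y$, the lengths are uniformly bounded so $\mathbf{L}(\eta_\rho) < \infty$, while $(\pi_0)_\#\eta_\rho = \delta_p$ and $(\pi_\infty)_\#\eta_\rho = (\mathrm{id})_\#\rho = \rho$; hence $\eta_\rho \in \mathrm{IP}(\delta_p, \rho)$.

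\emph{Cost estimate and the role of the exponent.} If $x$ lies on the $k$-th segment of $\gamma_y$ (from $c_{k-1}(y)$ to $c_k(y)$), then every $\gamma_z$ with $z \in Q^k(y)$ also runs along it, so $\theta_{\eta_\rho}(x) \ge \rho(Q^k(y))$, and since $\alpha - 1 < 0$ we get $\theta_{\eta_\rho}(x)^{\alpha-1} \le \rho(Q^k(y))^{\alpha-1}$ (for $\rho$-a.e.\ $y$ these masses are all positive, the exceptional $y$ forming a countable union of $\rho$-null dyadic cubes). With segment lengths $\le \tfrac{\sqrt d}{2}R\,2^{-k}$ and $\int \rho(Q^k(y))^{\alpha-1}\diff\rho(y) = \sum_{Q \in \mathcal{D}_k}\rho(Q)^\alpha \le (\#\mathcal{D}_k)^{1-\alpha} = 2^{dk(1-\alpha)}$ by concavity of $t \mapsto t^\alpha$ (the masses summing to $1$), we obtain
\[
\mathbf{I}_\alpha(\eta_\rho) = \int Z_{\eta_\rho}(\gamma_y)\diff\rho(y) \le |p - c_0| + \frac{\sqrt d}{2}R\sum_{k \ge 1} 2^{-k}\,2^{dk(1-\alpha)} = |p - c_0| + \frac{\sqrt d}{2}R\sum_{k \ge 1} 2^{k(d(1-\alpha)-1)},
\]
and this geometric series converges if and only if $d(1-\alpha) - 1 < 0$, i.e.\ if and only if $\alpha > 1 - 1/d$ --- exactly our hypothesis. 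Together with the reduction step this proves the theorem. The only genuinely quantitative point is the last display; the points that require care are the measurable selection of dyadic chains (trivialized by using half-open cubes, which tile perfectly) and the marginal- and cost-bookkeeping for the reversed and glued plans, for which the saving grace is that extra overlapping of curves only raises multiplicities and thus lowers the integrand $\theta^{\alpha-1}$. I do not expect a real obstacle: the construction is robust, and the hypothesis $\alpha > 1 - 1/d$ enters solely as the convergence threshold of the scale-by-scale cost $\sim 2^{k(d(1-\alpha)-1)}$.
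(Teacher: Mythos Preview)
Your proof is correct. The paper does not actually prove this theorem: it is stated among the ``results extracted from \cite{BerCasMor05,Peg,Xia03}'' and used as a black box. Your argument is essentially the classical dyadic construction going back to \cite{Xia03} (there phrased in the Eulerian language of transport paths, here recast Lagrangian-style), together with a standard triangle-inequality/gluing reduction to the single-source case. The key estimate $\sum_{Q\in\mathcal D_k}\rho(Q)^\alpha\le 2^{dk(1-\alpha)}$ and the resulting geometric series with ratio $2^{d(1-\alpha)-1}$ are exactly the computation that identifies $\alpha>1-1/d$ as the irrigability threshold in those references, so there is nothing to compare beyond saying that you have reproduced the standard proof the paper chose to cite rather than include.
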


From now on we assume that $\alpha \in \left]1-\frac{1}{d}, 1 \right[$.

\subsubsection*{Irrigation distance}
Let us set
\[d_\alpha(\mu,\nu) = \min \{\mathbf{I}_\alpha(\eta) \; : \; \eta \in \mathrm{IP}(\mu,\nu)\}\]
for any pair $\mu,\nu$ of probability measures on $\R^d$. For any compact $K \subseteq \R^d$, it induces a distance on $\prob(K)$ which metrizes the weak-$\star$ convergence of measures in the duality with $\mathcal{C}(K)$. On non-compact subsets of $\R^d$, the distance $d_\alpha$ is lower semicontinuous w.r.t. the  weak-$\star$ convergence of measures in the duality with bounded and continuous functions (narrow convergence)\footnote{Proving this is just an adaptation of the proof on compact sets. If $\mu$ is fixed (for example) and $\nu_n \to \nu$ with $\eta_n \in \mathrm{IP}(\mu,\nu_n)$ optimal and parameterized by arc length, assuming that the cost is bounded, the irrigation plans $\eta_n$ are tight and one may extract a subsequence converging to some $\eta$ which irrigates $\nu$ and whose cost is less than $\liminf d_\alpha(\mu,\nu_n)$ by lower semicontinuity of $\mathbf{I}_\alpha$.}.

\begin{prop}[Scaling law]\label{prp:bt_law}
For any compactly supported measures $\mu,\nu$ with equal mass, there is an upper bound on the irrigation distance depending on the mass and the diameter. We set $\mu' = \mu - \mu\wedge \nu, \nu' = \nu - \mu \wedge \nu$ the disjoint parts of the measures and $m = \abs{\mu'} = \abs{\nu'}$ their common mass. Then:
\[d_\alpha(\mu,\nu) \leq C m^\alpha \diam(\supp \mu' \cup \supp \nu').\]
\end{prop}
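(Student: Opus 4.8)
\emph{Proof proposal.} If $m=0$ then $\mu=\nu$ and there is nothing to prove, so assume $m>0$ and write $D \coloneqq \diam(\supp\mu'\cup\supp\nu')$, $\bar\mu \coloneqq \mu'/m$, $\bar\nu \coloneqq \nu'/m$, which are probability measures supported in a common set of diameter $D$. I would split the statement into two independent facts:
\begin{enumerate}
\item[(a)] a \emph{mass-rescaling with free common part} inequality $d_\alpha(\mu,\nu) \leq m^\alpha\, d_\alpha(\bar\mu,\bar\nu)$;
\item[(b)] a \emph{quantitative irrigability} bound $d_\alpha(P,Q) \leq C(d,\alpha)\,D$ valid for any two probability measures $P,Q$ supported in a common set of diameter $D$.
\end{enumerate}
Combining (a) and (b) with $(\bar\mu,\bar\nu)$ in the role of $(P,Q)$ gives exactly $d_\alpha(\mu,\nu)\leq C m^\alpha D$.

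For (a), let $\bar\eta\in\mathrm{IP}(\bar\mu,\bar\nu)$ be optimal, let $\rho\coloneqq\mu\wedge\nu$, and let $\iota\colon\R^d\to\Gamma$ send a point $x$ to the constant curve at $x$ (a continuous, hence Borel, map). Set $\eta\coloneqq m\bar\eta + \iota_\#\rho$. This is a probability measure (total mass $m+(1-m)=1$), and since $\pi_0\circ\iota=\pi_\infty\circ\iota=\mathrm{id}$ one checks $(\pi_0)_\#\eta = m\bar\mu+\rho=\mu$ and likewise $(\pi_\infty)_\#\eta=\nu$, so $\eta\in\mathrm{IP}(\mu,\nu)$. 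The curves coming from $\iota_\#\rho$ have zero length, hence contribute $0$ to $\mathbf{I}_\alpha$ (this is precisely where the convention $\infty\times 0 = 0$ is used, at atoms of $\rho$ where $\theta_\eta$ could be $0$). On the curves of $\bar\eta$ one has $\theta_\eta \geq m\,\theta_{\bar\eta}$ pointwise, and since $t\mapsto t^{\alpha-1}$ is nonincreasing ($\alpha<1$) we get $\theta_\eta(x)^{\alpha-1}\leq m^{\alpha-1}\theta_{\bar\eta}(x)^{\alpha-1}$ for all $x$ (the inequality is trivial where $\theta_{\bar\eta}(x)=0$, and such points are $\bar\eta$-negligible along $\bar\eta$-a.e. curve because $\mathbf{I}_\alpha(\bar\eta)<\infty$). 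Splitting the outer integral defining $\mathbf{I}_\alpha(\eta)$ and discarding the null contribution of $\iota_\#\rho$,
\[
d_\alpha(\mu,\nu)\leq \mathbf{I}_\alpha(\eta) = m\int_\Gamma\int_\gamma \theta_\eta(x)^{\alpha-1}\diff x\diff\bar\eta(\gamma) \leq m\cdot m^{\alpha-1}\,\mathbf{I}_\alpha(\bar\eta) = m^\alpha\, d_\alpha(\bar\mu,\bar\nu).
\]

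For (b), I would use that $d_\alpha$ is a distance (triangle inequality) to reduce to the one-point problem: fixing a point $x_0$ in the common support, $d_\alpha(P,Q)\leq d_\alpha(P,\delta_{x_0})+d_\alpha(\delta_{x_0},Q)$, so it suffices to show $d_\alpha(P,\delta_{x_0})\leq C(d,\alpha)D$. This is the classical dyadic multiscale construction: enclose $\supp P$ in a cube $Q_0$ of side $\ell_0 \lesssim D$, and at each scale route the mass of every dyadic subcube to the center of its parent. There are at most $2^{kd}$ cubes of generation $k$ and their $P$-masses sum to $1$, so by the power-mean (Hölder) inequality $\sum_i P(Q_{k,i})^\alpha \leq 2^{kd(1-\alpha)}$; since each transfer at generation $k$ has length $\lesssim 2^{-k}\ell_0$, the cost at that generation is $\lesssim 2^{-k}\ell_0\, 2^{kd(1-\alpha)}$, and summing over $k\geq 0$ yields a geometric series of ratio $2^{-(1-d(1-\alpha))}$ that converges \emph{exactly because} $\alpha>1-\tfrac1d$, with total $\leq C(d,\alpha)\ell_0\leq C(d,\alpha)D$. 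Organizing these generation-by-generation transfers into a genuine irrigation plan whose cost is bounded by the above sum (the multiplicity along a segment being at least the total mass one routes through it) is routine.

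The only genuinely non-formal ingredient is step (b), the dyadic estimate — it is the sole place where the hypothesis $\alpha>1-\tfrac1d$ is used and the only multiscale argument; alternatively it can simply be quoted from the branched-transport literature, being the quantitative refinement of the Irrigability theorem stated above. The points requiring a little care in (a) are purely bookkeeping: the conventions $0^{\alpha-1}=\infty$, $\infty\times 0 = 0$, and the fact that $\theta_{\bar\eta}>0$ holds $\bar\eta$-almost everywhere along curves, which is a consequence of $\mathbf{I}_\alpha(\bar\eta)<\infty$.
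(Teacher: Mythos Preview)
The paper does not give its own proof of this proposition: it is listed among the results ``extracted from \cite{BerCasMor05,Peg,Xia03}'' and stated without argument. Your proposal is correct and is essentially the standard proof from that literature. Step (b) is exactly Xia's dyadic construction from \cite{Xia03} (and is indeed the quantitative form of the Irrigability theorem, as you say), while step (a) is the routine reduction that lets one discard the common part $\mu\wedge\nu$ via constant curves and then rescale mass, using monotonicity of $t\mapsto t^{\alpha-1}$. The bookkeeping points you flag (the conventions $0^{\alpha-1}=\infty$, $\infty\cdot 0=0$, and the inequality $\theta_\eta\geq m\,\theta_{\bar\eta}$) are handled correctly. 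There is nothing in the paper to compare against beyond noting that your argument matches the cited sources.
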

\subsubsection*{Landscape function}
The landscape function was introduced by the second author in \cite{San}, in the single-source case. It has been then studied by Brancolini, Solimini in \cite{BraSol11} and by the third author in \cite{Xia14}. It will be a central tool in the study of the shape optimization problem we are going to introduce. We recall here the basic definitions and properties. Given an optimal irrigation plan $\eta \in \mathrm{IP}(\delta_0,\nu)$, we say that a curve $\gamma$ is $\eta$-good if
\begin{itemize}
\item the quantity $Z_\eta(\gamma) = \int_\gamma \theta_\eta(x)^{\alpha-1}  \diff x$ is finite,
\item for all $t < T(\gamma)$,
\[\theta_\eta(\gamma(t)) = \eta(\{\tilde{\gamma}\in \Gamma(\mathbb{R}^d) : \gamma = \tilde{\gamma} \text{ on } [0,t]\}),\]
where $T(\gamma)=\inf \{t\in [0,\infty]: \gamma(s)=\gamma(\infty) \text{ for all } s\in[ t, \infty]\}$ is the stopping time of $\gamma$.
\end{itemize}

One may prove by optimality that $\eta$ is concentrated on the set of $\eta$-good curves. Moreover it is proven in \cite{San} that for all $\eta$-good curves $\gamma$, the quantity $Z_\eta(\gamma)$ depends only on the final point $\gamma(\infty)$ of the curve, thus we may define the landscape function $z_\eta$ as follows:
\[z_\eta(x) = \begin{dcases*}
Z_\eta(\gamma)&
if $\gamma$ is an $\eta$-good curve s.t. $x = \gamma(\infty)$,\\
+\infty&
otherwise.
\end{dcases*}\]
Notice that for an optimal $\eta$ the cost may be expressed in terms of $z_\eta$:
\[\mathbf{I}_\alpha(\eta) = \int_\Gamma Z_\eta(\gamma) \diff\eta(\gamma) = \int_{\R^d} z_\eta(x) \diff\nu(x).\]
Finally, one may show that $z_\eta$ is lower semicontinuous and that the inequality $z_\eta(x)\geq |x|$ holds.

\subsection{The shape optimization problem}

We ask ourselves the following question: what is the set of unit volume which is closest to the origin in the sense of irrigation? To give this a precise meaning, we embed everything in the space of probability measures; hence we want to minimize the $d_\alpha$ distance between the unit Dirac mass at $0\in \mathbb{R}^d$ and sets $E$ of unit volume, seen as the uniform measure on $E$. This problem reads
\begin{equation}\tag{$\text{S}_\alpha$}\label{pb:S}
\min\quad \{d_\alpha(\delta_0, \indic_E \lbm) : \abs{E} = 1\},
\end{equation}
where $\lbm$ denotes the Lebesgue measure on $\mathbb{R}^d$. We relax this problem by minimizing on a larger set, which is the set of probability measures with Lebesgue density bounded by $1$, thus getting:
\begin{equation}\tag{$\text{R}_\alpha$}\label{pb:R}
\min\quad \{\mathbf{X}_\alpha(\nu) : \nu \leq 1, \nu \in \prob(\R^d)\},
\end{equation}
where $\mathbf{X}_\alpha(\nu) = d_\alpha(\delta_0, \nu)$.

In the following, we will sometimes encounter positive measures which do not have unit mass, thus we extend the functional by setting $\mathbf{X}_\alpha(\nu) \coloneqq d_\alpha(\abs{\nu}\delta_0, \nu)$ for any finite measure $\nu$.

A key tool in the analysis of this problem lies in the following proposition, proved in \cite{San} under slightly more restrictive hypotheses.
\begin{prop}[First variation inequality for $\mathbf{X}_\alpha$]\label{prop:first_var}
Suppose that $\nu\in \mathrm{Prob}(\mathbb{R}^d)$ with $\mathbf{X}_\alpha(\nu)< \infty$. 
Suppose also that $\eta$ is an optimal irrigation plan between $\abs{\nu}\delta_0$ and $\nu$, with landscape function $z_\eta$. The following holds:
\[\mathbf{X}_\alpha(\tilde{\nu}) \leq \mathbf{X}_\alpha(\nu) + \alpha \int z_\eta \diff(\tilde{\nu}-\nu)\]
for any $\tilde{\nu}\in \mathrm{Prob}(\mathbb{R}^d)$.
\end{prop}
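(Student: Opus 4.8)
The plan is to derive the inequality from the first variation inequality for $\mathbf{I}_\alpha$ (the proposition already stated) by exploiting the identity $\mathbf{I}_\alpha(\eta) = \int z_\eta \diff\nu$ valid for optimal plans. First I would fix an optimal irrigation plan $\eta \in \mathrm{IP}(\abs{\nu}\delta_0, \nu)$ with finite cost and, for a competitor $\tilde\nu \in \prob(\R^d)$, take any irrigation plan $\tilde\eta \in \mathrm{IP}(\abs{\tilde\nu}\delta_0, \tilde\nu)$. Since $\abs{\nu} = \abs{\tilde\nu} = 1$, both plans have the same source $\delta_0$, so $\tilde\eta$ is an admissible competitor for $\eta$ in the first variation inequality for $\mathbf{I}_\alpha$. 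Applying it gives
\[
\mathbf{I}_\alpha(\tilde\eta) \leq \mathbf{I}_\alpha(\eta) + \alpha \int Z_\eta(\gamma) \diff(\tilde\eta - \eta)(\gamma).
\]
Minimizing the left-hand side over all such $\tilde\eta$ turns $\mathbf{I}_\alpha(\tilde\eta)$ into $d_\alpha(\delta_0,\tilde\nu) = \mathbf{X}_\alpha(\tilde\nu)$, but the right-hand side still depends on $\tilde\eta$, so I cannot just pass to the infimum naively; instead I would argue that it suffices to produce \emph{one} $\tilde\eta$ for which the right-hand side is controlled by $\mathbf{X}_\alpha(\nu) + \alpha\int z_\eta \diff(\tilde\nu - \nu)$, and then the inequality $\mathbf{X}_\alpha(\tilde\nu) \le \mathbf{I}_\alpha(\tilde\eta)$ closes the argument.

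The second step is to rewrite the two terms $\int Z_\eta \diff\eta$ and $\int Z_\eta \diff\tilde\eta$ appearing on the right. For the first, $\mathbf{I}_\alpha(\eta) + \alpha\int Z_\eta \diff(\tilde\eta - \eta) = (1-\alpha)\mathbf{I}_\alpha(\eta) + \alpha\int Z_\eta \diff\tilde\eta$; since $\eta$ is optimal, $\mathbf{I}_\alpha(\eta) = \mathbf{X}_\alpha(\nu)$ and also $\mathbf{I}_\alpha(\eta) = \int_\Gamma Z_\eta \diff\eta = \int_{\R^d} z_\eta \diff\nu$, using that $\eta$ is concentrated on $\eta$-good curves and that $Z_\eta(\gamma) = z_\eta(\gamma(\infty))$ on such curves. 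So the bound reads $\mathbf{X}_\alpha(\tilde\nu) \le \mathbf{I}_\alpha(\tilde\eta) \le \mathbf{X}_\alpha(\nu) + \alpha\left(\int Z_\eta \diff\tilde\eta - \int z_\eta \diff\nu\right)$. It remains to bound $\int_\Gamma Z_\eta(\gamma)\diff\tilde\eta(\gamma)$ by $\int_{\R^d} z_\eta \diff\tilde\nu$. Here the key observation is that $Z_\eta(\gamma) \ge z_\eta(\gamma(\infty))$ for \emph{every} curve $\gamma$ — indeed if $\gamma(\infty) = x$ then either $z_\eta(x) = +\infty$ (if no $\eta$-good curve ends at $x$, but then... ) or $z_\eta(x) = Z_\eta(\gamma')$ for an $\eta$-good curve $\gamma'$ ending at $x$, and one shows $Z_\eta(\gamma)\ge Z_\eta(\gamma')$ because an $\eta$-good curve realizes the minimal value of $Z_\eta$ among curves with the same endpoint (this is essentially how $z_\eta$ is characterized, and follows from the structure of good curves / cyclical-type optimality). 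Granting this pointwise inequality, $\int_\Gamma Z_\eta \diff\tilde\eta \ge \int_\Gamma z_\eta(\gamma(\infty))\diff\tilde\eta(\gamma) = \int_{\R^d} z_\eta \diff (\pi_\infty)_\#\tilde\eta = \int_{\R^d} z_\eta \diff\tilde\nu$ — but wait, this goes the wrong way. So instead I must choose $\tilde\eta$ cleverly: pick $\tilde\eta$ so that $\tilde\eta$-almost every curve is a concatenation reaching its endpoint along an $\eta$-good curve of $\eta$, i.e. build $\tilde\eta$ by composing with the "fibered" structure of $\eta$, so that $Z_\eta(\gamma) = z_\eta(\gamma(\infty))$ holds $\tilde\eta$-a.e. and thus $\int Z_\eta \diff\tilde\eta = \int z_\eta \diff\tilde\nu$ exactly.

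The main obstacle, then, is the construction of a suitable competitor $\tilde\eta \in \mathrm{IP}(\delta_0,\tilde\nu)$ whose curves follow $\eta$-good paths: one disintegrates $\eta$ with respect to $(\pi_\infty)_\#\eta = \nu$ to get, for $\nu$-a.e.\ $x$, a measure on $\eta$-good curves ending at $x$, and then one needs a transport-type construction glueing these fibers to a plan moving $\nu$ toward $\tilde\nu$; the measurable-selection and measurability issues in this glueing, together with the possibility that $\tilde\nu$ is not absolutely continuous with respect to $\nu$ (so that some mass of $\tilde\nu$ sits where $z_\eta = +\infty$, in which case the inequality is trivially true), are the delicate points. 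I would handle the degenerate case separately — if $\int z_\eta\diff\tilde\nu = +\infty$ the claimed inequality is vacuous — and otherwise carry out the glueing, invoking standard disintegration and gluing lemmas for measures on curve spaces as in \cite{BerCasMor05, San}. This is precisely the step where \cite{San} needed extra hypotheses and where the present proof must be more careful.
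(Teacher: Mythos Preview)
Your overall strategy coincides with the paper's: handle the trivial case $\int z_\eta\diff\tilde\nu=+\infty$ separately, otherwise build a competitor $\tilde\eta\in\mathrm{IP}(\delta_0,\tilde\nu)$ whose curves are $\eta$-good so that $\int_\Gamma Z_\eta\diff\tilde\eta=\int_{\R^d} z_\eta\diff\tilde\nu$, and then apply the first variation inequality for $\mathbf{I}_\alpha$.

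The one place where your outline is more complicated than necessary---and where the description becomes slightly hazardous---is the construction of $\tilde\eta$. You propose to disintegrate $\eta$ over $\nu$ and then glue the resulting fibers to a transport plan from $\nu$ to $\tilde\nu$. But disintegration over $\nu$ only gives you good curves for $\nu$-a.e.\ endpoint, whereas $\tilde\nu$ may charge points outside $\supp\nu$ where $z_\eta$ is still finite; a gluing via a transport plan does not obviously produce $\eta$-good curves to such points, and concatenating with extra segments would destroy the equality $Z_\eta(\gamma)=z_\eta(\gamma(\infty))$. The paper bypasses this entirely: once $\int z_\eta\diff\tilde\nu<\infty$, for $\tilde\nu$-a.e.\ $x$ one has $z_\eta(x)<\infty$, hence by definition an $\eta$-good curve ending at $x$ exists. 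The multifunction $x\mapsto\{\eta\text{-good curves ending at }x\}$ is measurable (countable union of closed-graph multifunctions, using the lower semicontinuity of a variant of $Z_\eta$), so a measurable selection $g:\R^d\to\Gamma$ exists, and one simply sets $\tilde\eta=g_\#\tilde\nu$. No disintegration, no transport plan, no gluing. With this in hand the rest of your argument goes through verbatim.
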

Notice also that the integral $\int z_\eta \diff(\tilde{\nu}-\nu)$ is well-defined since $\int z_\eta \diff\nu=\mathbf{I}_{\alpha}(\eta)=\mathbf{X}_{\alpha}(\nu) < \infty$ and $z_\eta$ is non-negative, though it may be infinite.
\begin{proof}
If $\int z_\eta \diff \tilde{\nu} = \infty$ then there is nothing to prove. Otherwise for $\nu$-a.e. $x$, $z_\eta(x)$ is finite hence there are $\eta$-good curves reaching $x$ and one can find a measurable\footnote{One can characterize $\eta$-good curves as those $\gamma$ such that $\tilde{Z}_\eta(\gamma) < \infty$ where $\tilde{Z}_\eta(\gamma) \coloneqq \int_0^\infty \abs{\gamma}_{t,\eta} \,\diff t$ is a slight variation of $Z_\eta$ defined in \cite{San} which is also lower semicontinuous. Hence the multifunction associating to every $x$ the set of $\eta$-good curves reaching $x$ can be written as $\bigcup_{\ell \in \Q}\{\gamma\in\Gamma\,:\,\tilde{Z}_\eta(\gamma)\leq \ell, \gamma(\infty)=x\}$,  i.e. as a countable union of multifunctions with closed graph. This means that this multifunction is measurable and admits a measurable selection (see e.g. \cite{CasVal}).} map $g:\mathbb{R}^d\rightarrow \Gamma$ which associates with every $x$ an $\eta$-good curve reaching $x$. Let us build an irrigation plan $\tilde{\eta} \in \mathrm{IP}(\abs{\tilde{\nu}}\delta_0, \tilde{\nu})$ which is concentrated on $\eta$-good curves, by setting $\tilde{\eta} = g_\# \nu$, so that
\[\int_\Gamma Z_\eta \diff\tilde{\eta} = \int_\Gamma z_\eta(\gamma(\infty)) \diff\tilde{\eta}(\gamma) = \int_{\R^d} z_\eta(x) \diff\tilde{\nu}.\]
Then, by the first variation inequality for $\mathbf{I}_\alpha$, we get:
\[\mathbf{X}_\alpha(\tilde{\nu}) \doteq d_\alpha(\abs{\tilde{\nu}}\delta_0,\tilde{\nu}) \leq \mathbf{I}_\alpha(\tilde{\eta}) \leq \mathbf{I}_\alpha(\eta) + \alpha \int_\Gamma Z_\eta \diff(\tilde{\eta}-\eta) = \mathbf{X}_\alpha(\nu) + \alpha \int_{\R^d} z_\eta \diff (\tilde{\nu}-\nu).\qedhere\] 
\end{proof}

\section{Existence and first properties}\label{sec:existence}

We will often denote by $C = C(\alpha,d)$ or $c = c(d)$ different positive constants which depend only on $\alpha,d$ or $d$ respectively.

\begin{thm}
The relaxed shape optimization problem \eqref{pb:R} admits at least a minimizer.
\end{thm}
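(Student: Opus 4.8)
The plan is to run the direct method of the calculus of variations.

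\textbf{Step 1: finiteness of the infimum.} Taking $E\subseteq\R^d$ to be a ball with $\abs{E}=1$, the measure $\nu=\indic_E\lbm$ is admissible for \eqref{pb:R} (it is a probability measure with Lebesgue density bounded by $1$), and by the scaling law (Proposition~\ref{prp:bt_law}) applied to $\mu=\delta_0$ and $\nu=\indic_E\lbm$, whose disjoint parts carry common mass $m=1$, one has $\mathbf{X}_\alpha(\nu)=d_\alpha(\delta_0,\indic_E\lbm)\leq C\diam(\{0\}\cup E)<\infty$; alternatively the irrigability theorem, valid since $\alpha>1-1/d$, gives the same conclusion. Hence we may fix a minimizing sequence $(\nu_n)$ of admissible measures with $\sup_n\mathbf{X}_\alpha(\nu_n)\eqqcolon M<\infty$.

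\textbf{Step 2: compactness.} The density bound $\nu_n\leq 1$ alone does not prevent mass from escaping to infinity, so the crucial point is to exploit the energy bound. For each $n$ choose an optimal irrigation plan $\eta_n\in\mathrm{IP}(\delta_0,\nu_n)$ with landscape function $z_{\eta_n}$; combining the pointwise bound $z_{\eta_n}(x)\geq\abs{x}$ with the identity $\mathbf{X}_\alpha(\nu_n)=\int z_{\eta_n}\diff\nu_n$ yields
\[\int_{\R^d}\abs{x}\diff\nu_n(x)\leq\int_{\R^d}z_{\eta_n}(x)\diff\nu_n(x)=\mathbf{X}_\alpha(\nu_n)\leq M.\]
By Markov's inequality, $\nu_n(\{\abs{x}>R\})\leq M/R$ for every $R>0$, so the sequence $(\nu_n)$ is tight; since each $\nu_n$ is a probability measure, Prokhorov's theorem provides a subsequence (not relabelled) converging narrowly, i.e. weakly-$\star$ in duality with $\mathcal{C}_b(\R^d)$, to some $\nu\in\prob(\R^d)$.

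\textbf{Step 3: passing to the limit.} The constraint survives: for every open set $U\subseteq\R^d$ the portmanteau theorem gives $\nu(U)\leq\liminf_n\nu_n(U)\leq\lbm(U)$, and by outer regularity of $\lbm$ this propagates to $\nu(A)\leq\lbm(A)$ for every Borel set $A$, i.e. $\nu\leq 1$. Finally, by lower semicontinuity of $d_\alpha(\delta_0,\cdot)$ with respect to narrow convergence (the footnote following the definition of the irrigation distance, with the first marginal held fixed at $\delta_0$),
\[\mathbf{X}_\alpha(\nu)=d_\alpha(\delta_0,\nu)\leq\liminf_n d_\alpha(\delta_0,\nu_n),\]
and the right-hand side equals the infimum in \eqref{pb:R} since $(\nu_n)$ is a minimizing sequence; as $\nu$ is admissible, it is a minimizer.

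The only step that is not pure routine is the tightness estimate in Step 2: the insight is that the inequality $z_\eta\geq\abs{\cdot}$ converts a bound on the irrigation energy into a bound on the first moment of $\nu_n$, which is exactly what precludes loss of mass at infinity — and this is also the mechanism that makes the quoted lower semicontinuity statement valid on the noncompact space $\R^d$, not just on compact sets.
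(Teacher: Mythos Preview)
Your proof is correct and follows essentially the same route as the paper's: tightness of a minimizing sequence via the first-moment bound $\int\abs{x}\diff\nu_n\leq\int z_{\eta_n}\diff\nu_n=\mathbf{X}_\alpha(\nu_n)$, then Prokhorov, then lower semicontinuity of $d_\alpha$ and stability of the constraint $\nu\leq 1$ under narrow convergence. You have simply supplied the details (finiteness of the infimum, Markov's inequality, the portmanteau argument for preserving the density bound) that the paper's three-line proof leaves implicit.
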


\begin{proof}
The existence of a minimizer follows from the lower semicontinuity and tightness. Indeed, any minimizing sequence $\nu_n$ must have bounded first moment since 
$$\int |x|\diff \nu(x)\leq \int z_\eta(x)\diff \nu(x)=d_\alpha(\delta_0,\nu).$$
A bound on the first moment implies tightness of the sequence and, up to extracting a subsequence, one has $\nu_n\deb\nu$. The condition $\nu_n\leq 1$ implies $\nu\leq 1$ and the lower semicontinuity of $d_\alpha$ provides the optimality of $\nu$.
\end{proof}
For $1>\alpha>1-\frac{1}{d}$, we will denote the optimal value for the relaxed shape optimization problem $\eqref{pb:R}$ by:
\[e_{\alpha}: = \min\{d_{\alpha}(\delta_0,\nu): \nu \le 1 \text{ and } \nu \in \prob(\R^d) \}.\]

\begin{lem}[Scaling lemma]\label{lem:scaling}
For any finite measure $\nu$ we have
\[\mathbf{X}_{\alpha}(\nu) \geq e_{\alpha} \abs{\nu}^{\alpha+\frac{1}{d}}.\]
\end{lem}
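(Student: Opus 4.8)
The plan is to reduce to the definition of $e_\alpha$ by a rescaling argument. Write $m = |\nu|$; we may assume $0 < m < \infty$ and $\mathbf{X}_\alpha(\nu) < \infty$, otherwise there is nothing to prove. The idea is to turn $\nu$ into a competitor for the relaxed problem \eqref{pb:R}, which requires producing a probability measure with Lebesgue density bounded by $1$; this is exactly what a suitable dilation achieves. Precisely, for $\lambda > 0$ let $h_\lambda(x) = \lambda x$ and set $\nu_\lambda = \tfrac{1}{m}(h_\lambda)_\#\nu$. Then $\nu_\lambda$ is a probability measure, and since $(h_\lambda)_\#$ multiplies densities by $\lambda^{-d}$, choosing $\lambda = m^{1/d}$ gives $\nu_\lambda \leq 1$, so $\nu_\lambda$ is admissible in \eqref{pb:R} and $\mathbf{X}_\alpha(\nu_\lambda) \geq e_\alpha$.

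The second ingredient is the behaviour of $\mathbf{X}_\alpha$ (equivalently of $d_\alpha(|\cdot|\delta_0, \cdot)$) under dilation and under multiplication of the mass by a scalar. If $\eta$ is an irrigation plan realizing $\mathbf{X}_\alpha(\nu)$, pushing it forward by the curve-dilation $\gamma \mapsto h_\lambda \circ \gamma$ and simultaneously rescaling the total mass by $1/m$ produces an irrigation plan between $\tfrac{1}{m}|\nu_\lambda\text{-type}|\delta_0$... more cleanly: a direct computation on the cost $\mathbf{I}_\alpha$ shows that a spatial dilation by $\lambda$ multiplies lengths by $\lambda$ while leaving multiplicities $\theta_\eta(x)$ unchanged, hence multiplies $\mathbf{I}_\alpha$ by $\lambda$; and replacing $\eta$ by $c\,\eta$ multiplies every multiplicity by $c$ and the outer integral by $c$, hence multiplies $\mathbf{I}_\alpha$ by $c^{\alpha-1}\cdot c = c^\alpha$. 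Combining the two, with $c = 1/m$ and $\lambda = m^{1/d}$, one gets
\[
\mathbf{X}_\alpha(\nu_\lambda) = \Bigl(\tfrac{1}{m}\Bigr)^{\alpha} m^{1/d}\, \mathbf{X}_\alpha(\nu) = m^{-\alpha - 1/d + 1}\,\mathbf{X}_\alpha(\nu)\cdot m^{\,?}
\]
— so I should be careful with the bookkeeping: the net exponent must come out so that $\mathbf{X}_\alpha(\nu) = m^{\alpha + 1/d}\,\mathbf{X}_\alpha(\nu_\lambda)$. Checking: dilating by $\lambda = m^{1/d}$ contributes a factor $m^{1/d}$, and the mass rescaling by $1/m$ contributes $m^{-\alpha}$, giving $\mathbf{X}_\alpha(\nu_\lambda) = m^{1/d - \alpha}\,\mathbf{X}_\alpha(\nu)$... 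This is the reverse of what I want, which signals that the clean move is instead to start from an \emph{optimal} competitor $\nu^\ast$ for $e_\alpha$, dilate and mass-scale \emph{it} to match $|\nu| = m$ while keeping density $\leq 1$, and then use optimality of $\nu$ only as one inequality; but in fact the lemma is a \emph{lower} bound on $\mathbf{X}_\alpha(\nu)$ for \emph{every} $\nu$, so the correct logic is: from $\nu$ build the admissible $\nu_\lambda$, invoke $\mathbf{X}_\alpha(\nu_\lambda) \geq e_\alpha$, and then express $\mathbf{X}_\alpha(\nu)$ in terms of $\mathbf{X}_\alpha(\nu_\lambda)$ via the scaling identities, which yields $\mathbf{X}_\alpha(\nu) = m^{\alpha + 1/d}\,\mathbf{X}_\alpha(\nu_\lambda) \geq e_\alpha\, m^{\alpha+1/d}$, as claimed.

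The main obstacle — really the only non-bookkeeping point — is to verify rigorously the two scaling identities for $\mathbf{X}_\alpha$, i.e. that $\mathbf{X}_\alpha\bigl((h_\lambda)_\#\mu\bigr) = \lambda\,\mathbf{X}_\alpha(\mu)$ and $\mathbf{X}_\alpha(c\mu) = c^\alpha\,\mathbf{X}_\alpha(\mu)$ for $\lambda, c > 0$. Each follows from the analogous identity for $\mathbf{I}_\alpha$ at the level of irrigation plans together with the fact that the maps $\eta \mapsto (h_\lambda)_\ast\eta$ (dilation of curves) and $\eta \mapsto c\eta$ set up bijections $\mathrm{IP}(|\mu|\delta_0,\mu) \to \mathrm{IP}(|\mu'|\delta_0,\mu')$ for the corresponding $\mu'$; the multiplicity transforms as $\theta_{(h_\lambda)_\ast\eta}(\lambda x) = \theta_\eta(x)$ and $\theta_{c\eta} = c\,\theta_\eta$, and the line-integral transforms with the obvious Jacobian $\lambda$, so the cost identities drop out. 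Note the scaling identities are already implicit in Proposition \ref{prp:bt_law}; once they are in hand, the proof of the lemma is a two-line computation.
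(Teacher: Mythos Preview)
Your approach is exactly the paper's: rescale $\nu$ into an admissible competitor for \eqref{pb:R} and combine the definition of $e_\alpha$ with the scaling identity $d_\alpha\bigl(c\,(h_\lambda)_\#\mu,\, c\,(h_\lambda)_\#\nu\bigr) = c^\alpha \lambda\, d_\alpha(\mu,\nu)$. Your discussion of why this identity holds at the level of $\mathbf{I}_\alpha$ is correct.

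The problem is a sign error in the dilation exponent. With $\nu_\lambda = \tfrac{1}{m}(h_\lambda)_\#\nu$ and $\nu$ of density $f$, the density of $\nu_\lambda$ is $\tfrac{1}{m}\lambda^{-d} f(\cdot/\lambda)$; to get $\nu_\lambda \leq 1$ from $f\leq 1$ you need $\tfrac{1}{m}\lambda^{-d}= 1$, i.e.\ $\lambda = m^{-1/d}$ (the paper's choice), not your $\lambda = m^{1/d}$. Your own cost computation already exposes the inconsistency: with $\lambda = m^{1/d}$ you correctly obtain $\mathbf{X}_\alpha(\nu_\lambda) = m^{1/d-\alpha}\,\mathbf{X}_\alpha(\nu)$, hence $\mathbf{X}_\alpha(\nu) = m^{\alpha-1/d}\,\mathbf{X}_\alpha(\nu_\lambda)$, the wrong exponent; and for $m<1$ your $\nu_\lambda$ has density bounded only by $m^{-2}>1$, so it is not admissible in \eqref{pb:R} and the inequality $\mathbf{X}_\alpha(\nu_\lambda)\geq e_\alpha$ is unavailable. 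Your concluding sentence then simply asserts the desired factor $m^{\alpha+1/d}$ without resolving either issue. Replacing $\lambda = m^{1/d}$ by $\lambda = m^{-1/d}$ fixes everything at once: $\nu_\lambda \leq 1$ holds, the scaling gives $\mathbf{X}_\alpha(\nu_\lambda) = m^{-1/d-\alpha}\,\mathbf{X}_\alpha(\nu)$, and therefore $\mathbf{X}_\alpha(\nu) = m^{\alpha+1/d}\,\mathbf{X}_\alpha(\nu_\lambda)\geq e_\alpha\, m^{\alpha+1/d}$.
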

\begin{proof}
For $\lambda=\abs{\nu}^{-1/d}$, let $\tilde{\nu}=\lambda^d \varphi_{\#}(\nu)$ be a scaling of $\nu$ under the map $\varphi(x)=\lambda x$ in $\R^d$. Then, 
$\int_{\R^d}\diff\tilde{\nu}=\lambda^d \int_{\R^d} \diff\nu=\lambda^d \abs{\nu}=1$ and $\nu \leq 1$. Thus,

\[e_{\alpha}\leq d_{\alpha}(\tilde{\nu},  \delta_0)=\lambda^{\alpha d+1} d_{\alpha}(\nu, \abs{\nu} \delta_0)=\abs{\nu}^{-\left(\alpha+\frac{1}{d}\right)} \mathbf{X}_\alpha(\nu).\]
\end{proof}

For any $\nu$, we say that $z$ is a landscape function of $\nu$ if it is the landscape function $z_\eta$ associated with some optimal irrigation plan $\eta \in \mathrm{IP}(\delta_0,\nu)$.

\begin{thm}\label{thm:exist}
Let $\nu$ be a minimizer of \eqref{pb:R} and $z$ a landscape function of $\nu$. Then $\nu$ is the indicator of a set $A$ which is a sublevel set of $z$:
\begin{equation}
\label{eq: z*_value}
A = \{x : z(x) \leq z^\star \}, \text{ with }z^\star = \frac{e_{\alpha}}{\alpha}\left(\alpha+\frac{1}{d}\right).
\end{equation}
In particular, $A$ is a solution to problem \eqref{pb:S} and it is a compact and path-connected set.
\end{thm}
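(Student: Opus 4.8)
The plan is to linearise \eqref{pb:R} through the first variation inequality, recognise the minimiser by a bathtub principle, and pin down the threshold by a scaling argument.

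\emph{Linearisation and bathtub principle.} Let $\eta$ be an optimal irrigation plan for $\nu$ and $z=z_\eta$ its landscape function. For any competitor $\tilde\nu\in\prob(\R^d)$ with $\tilde\nu\le\lbm$, Proposition~\ref{prop:first_var} gives $\mathbf{X}_\alpha(\tilde\nu)\le\mathbf{X}_\alpha(\nu)+\alpha\int z\,\diff(\tilde\nu-\nu)$, while optimality of $\nu$ gives $\mathbf{X}_\alpha(\tilde\nu)\ge\mathbf{X}_\alpha(\nu)$; subtracting, $\int z\,\diff\tilde\nu\ge\int z\,\diff\nu$. Hence $\nu$ minimises the \emph{linear} functional $\tilde\nu\mapsto\int z\,\diff\tilde\nu$ over $\{\tilde\nu\in\prob(\R^d):\tilde\nu\le\lbm\}$. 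Since $z$ is lower semicontinuous with $z\ge\abs{x}$, its sublevel sets are compact of finite volume and $s\mapsto\abs{\{z\le s\}}$ is nondecreasing and right-continuous; the bathtub principle then forces $\nu=\indic_A\lbm$ with $\{z<z^\star\}\subseteq A\subseteq\{z\le z^\star\}$ and $\abs{A}=1$, where $z^\star\coloneqq\inf\{s:\abs{\{z\le s\}}\ge 1\}$. (That $A$ coincides with $\{z\le z^\star\}$ exactly requires $\abs{\{z=z^\star\}}=0$, which I would establish by a short separate argument.)

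\emph{The threshold $z^\star$.} This is the crux. Test the first variation inequality against the dilations $\nu_\lambda\coloneqq(\varphi_\lambda)_\#\nu$, $\varphi_\lambda(x)=\lambda x$, which are probability measures (not admissible for $\lambda\ne 1$, but the inequality holds for all probability competitors) with $\mathbf{X}_\alpha(\nu_\lambda)=\lambda e_\alpha$ by the scaling of the cost. Writing $H(\lambda)\coloneqq\lambda^{d}e_{\alpha}\big(1+\tfrac{\lambda-1}{\alpha}\big)$ and $G(\lambda)\coloneqq\int_{\lambda A}z\,\diff\lbm$, the inequality becomes $G(\lambda)\ge H(\lambda)$ for every $\lambda>0$, with equality at $\lambda=1$ because $G(1)=\int_{A}z=\mathbf{X}_{\alpha}(\nu)=e_{\alpha}$. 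Thus $1$ minimises $G-H\ge 0$, so $G'(1)=H'(1)$. A Pohozaev-type computation, using $z\equiv z^\star$ on $\partial A$ and $\abs{A}=1$, gives $G'(1)=d\int_A z\,\diff\lbm+\int_A x\cdot\nabla z\,\diff\lbm=de_\alpha+(dz^\star-de_\alpha)=dz^\star$, while $H'(1)=de_\alpha+e_\alpha/\alpha$; hence $dz^\star=de_\alpha+e_\alpha/\alpha$, i.e. $z^\star=\frac{e_\alpha}{\alpha}(\alpha+\frac1d)$. The difficulty is that at this stage $z$ is only lower semicontinuous — Hölder regularity comes later, in Section~\ref{sec:hölder} — so the differentiation is not literally licit: cheaply one only obtains the one-sided bound $G(\lambda)\ge e_\alpha+z^\star(\lambda^d-1)$ (from $z\ge z^\star$ off $A$ and $z\le z^\star$ on $A$), hence a single inequality for $z^\star$. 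Making the identity rigorous means controlling $z$ on the thin shells $\lambda A\triangle A$ as $\lambda\to 1$ (even the finiteness of $z$ on the dilated sets is at stake), or replacing dilations of $A$ by rescalings of the sublevel sets $\{z\le s\}$, for which the scaling lemma is sharp, and combining the resulting inequalities with right-continuity of $s\mapsto\abs{\{z\le s\}}$. I expect this is where the real work lies.

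\emph{Elementary properties.} The set $A=\{z\le z^\star\}$ is closed by lower semicontinuity of $z$ and bounded since $z\ge\abs{x}$ forces $A\subseteq\overline{B}(0,z^\star)$, hence compact. For path-connectedness: each $x\in A$ has $z(x)<\infty$, so some $\eta$-good curve $\gamma$ reaches it; every prefix of an $\eta$-good curve is $\eta$-good and $t\mapsto Z_\eta(\gamma|_{[0,t]})$ is nondecreasing, so $z(\gamma(t))=Z_\eta(\gamma|_{[0,t]})\le Z_\eta(\gamma)=z(x)\le z^\star$, i.e.\ $\gamma$ stays in $A$; as $z(0)=0$, $A$ is a union of continuous curves through the origin, hence path-connected. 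Finally $\nu=\indic_A\lbm$ is admissible for \eqref{pb:S} with $d_\alpha(\delta_0,\indic_A\lbm)=e_\alpha$, and $e_\alpha$ is a lower bound for the value of \eqref{pb:S} since its competitors form a subclass of those of \eqref{pb:R}; so $A$ solves \eqref{pb:S}.
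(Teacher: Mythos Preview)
Your linearisation and bathtub step match the paper, as do the compactness and path-connectedness arguments. The genuine gap is in the determination of $z^\star$ and in the related claim that $\nu$ equals the full indicator $\indic_{\{z\le z^\star\}}$; you are candid that your dilation argument is not rigorous, and indeed it cannot be salvaged at this stage: for $\lambda>1$ the integral $G(\lambda)=\int_{\lambda A}z$ may well be $+\infty$ (nothing prevents $z\equiv+\infty$ on $\lambda A\setminus A$), and even the one-sided inequality $G(\lambda)\ge e_\alpha+z^\star(\lambda^d-1)$ is a \emph{lower} bound on $G$, just like $G\ge H$, so comparing the two yields nothing. The Pohozaev identity $\int_A x\cdot\nabla z=dz^\star-de_\alpha$ presupposes $z\in W^{1,1}(A)$ with trace $z^\star$ on $\partial A$, and both facts are corollaries of the later H\"older regularity, so the argument is circular.

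The paper avoids regularity entirely by choosing competitors for which the first variation \emph{and} the scaling lemma can be evaluated explicitly. For the upper bound it takes $\tilde\nu=\indic_{\{z\le k\}}$ with $k<z^\star$: the first variation gives $\int_{\{z>k\}}z\,\diff\nu\ge k\,m$ (where $m=\nu(\{z>k\})$), while the scaling lemma gives $\mathbf{X}_\alpha(\tilde\nu)\ge e_\alpha(1-m)^{\alpha+1/d}$; convexity of $t\mapsto t^{\alpha+1/d}$ then forces $\alpha k\le e_\alpha(\alpha+1/d)$. For the lower bound it works at a point $x_0\in A$ of maximal norm, adjoins a small exterior half-ball $H_r(x_0)\subset A^c$ of mass $m=cr^d$, and bounds the cost of irrigating $\indic_{A\cup H_r(x_0)}$ from above by routing the extra mass through $x_0$ (cost $\le \alpha m\,z(x_0)+Cm^\alpha r$); comparing with the scaling lower bound $e_\alpha(1+m)^{\alpha+1/d}$ and sending $r\to 0$ yields $z(x_0)\ge\frac{e_\alpha}{\alpha}(\alpha+1/d)$. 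The same pair of tools, applied with $\tilde\nu=\indic_{\{z\le z^\star\}}$ and \emph{strict} convexity of $t\mapsto t^{\alpha+1/d}$, forces $\abs{\{z\le z^\star\}}=1$ directly, so your deferred ``$\abs{\{z=z^\star\}}=0$'' argument is not needed. Your suggestion to replace dilations of $A$ by sublevel sets of $z$ is exactly the right instinct: that is what the paper does.
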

\begin{proof}
We show that $\nu$ also minimizes the first variation of $\mathbf{X}_\alpha$, that is $\mu \mapsto \int z \diff \mu$. Take $\tilde{\nu}$ a competitor for \eqref{pb:R}. By Proposition \ref{prop:first_var}, one has:
\[\mathbf{X}_\alpha(\tilde{\nu}) \leq \mathbf{X}_\alpha(\nu) + \alpha \int z \diff(\tilde{\nu}-\nu),\]
but $\mathbf{X}_\alpha(\nu) \leq \mathbf{X}_\alpha(\tilde{\nu})$, thus
\[\int z \diff\nu \leq \int z \diff\tilde{\nu}\]
for any $\tilde{\nu}$. So as to minimize this quantity, $\nu$ must concentrate its mass on the points where $z$ takes its lowest values. More precisely, there is a value $z^\star \in [0,\infty]$ such that
\[\nu(x) \begin{dcases*} = 1& if $z(x) < z^\star$,\\
\in [0,1]& if $z(x) = z^\star$,\\
= 0& if $z(x) > z^\star$.
\end{dcases*}\]
Indeed, we just take $z^\star = \sup \{t \in \R : \abs{\{z(x) \leq t\}} < 1\}$. Since $\int z \diff \nu = e_\alpha > 0$, necessarily $z^\star > 0$.
This kind of arguments is typical in optimization problems under an upper density constraint, as it was for instance done for crowd motion applications in \cite{MauRouSan}.

\paragraph{\textbf{Step 1:} $z^\star \leq \frac{e_\alpha}{\alpha} \left(\alpha + \frac{1}{d}\right)$}~

For $0 \leq k < z^\star$, we consider the competitor $\tilde{\nu} = \indic_{\{z \leq k\}}$ and set $\abs{\tilde{\nu}} = 1 - m$, noting that $m > 0$ by definition of $z^\star$. Using Lemma \ref{lem:scaling} and Proposition \ref{prop:first_var}, one gets
\begin{align*}
e_\alpha (1-m)^{\alpha+\frac{1}{d}} \leq \mathbf{X}_\alpha(\tilde{\nu}) \leq  \mathbf{X}_\alpha(\nu)+\alpha \int z \diff(\tilde{\nu}-\nu) = e_\alpha-\alpha \int_{\{z>k\}}z \diff\nu.
\end{align*}
Since $\nu(\{z > k\}) = 1 - \abs{\{z \leq k\}} = m$, it follows that
\begin{equation}
\label{eq: step1eqn}
e_\alpha (1-m)^{\alpha+\frac{1}{d}} \leq e_\alpha - \alpha k m.
\end{equation}
As $\alpha + \frac{1}{d} > 1$, the map $t \mapsto t^{\alpha + \frac{1}{d}}$ is (strictly) convex, thus
\[e_\alpha \left( 1 - \left(\alpha + \frac{1}{d}\right) m \right) \leq e_\alpha (1-m)^{\alpha+\frac{1}{d}} \leq e_\alpha - \alpha k m,\]
hence forgetting the middle term, substracting $e_\alpha$ and dividing by $m$:
\[
\alpha k \leq e_\alpha \left(\alpha + \frac{1}{d}\right).
\]
Taking the limit $k \to z^\star$ yields:
\begin{equation}\label{eq:zstar_leq}
z^\star \leq \frac{e_\alpha}{\alpha} \left(\alpha + \frac{1}{d}\right).
\end{equation}

\paragraph{\textbf{Step 2:} $\nu = \indic_A$ where $A = \{z\leq z^\star\}$}~

Take the competitor $\tilde{\nu} = \indic_{\{z \leq z^\star\}}$ and set $\abs{\tilde{\nu}} = 1 + m$, $m \geq 0$. Using again the scaling lemma and the first variation of $\mathbf{X}_\alpha$ one gets:
\[e_\alpha (1+m)^{\alpha + \frac{1}{d}} \leq e_\alpha + \alpha\int_{z = z^\star} z \diff (\tilde{\nu}-\nu) = e_\alpha + \alpha z^\star m.\]
Now by strict convexity of $t \mapsto t^{\alpha + \frac{1}{d}}$, if $m > 0$ then one has
\begin{gather*}
 e_\alpha (1+m)^{\alpha + \frac{1}{d}} > e_\alpha \left(1 + \left(\alpha+\frac{1}{d}\right)m\right),
 \shortintertext{thus}
e_\alpha \left(\alpha+ \frac{1}{d}\right)m < \alpha z^\star m,
\end{gather*}
which contradicts \eqref{eq:zstar_leq}. Consequently $m = 0$, hence $\nu = \tilde{\nu} = \indic_{\{z \leq z^\star\}}$.

\paragraph{\textbf{Step 3:} Compactness and connectedness}~

$A$ is closed since $z$ is lower semicontinuous and bounded since $z(x) \geq \abs{x}$ for all $x\in \R^d$. It is path-connected since any point $x$ with $z(x) \leq z^\star$ is the endpoint of an $\eta$-good curve $\gamma$ starting from $0$ and $\gamma \subseteq A$ because $z$ is increasing along this curve.

\paragraph{\textbf{Step 4:} $z^\star \geq \frac{e_\alpha}{\alpha}\left(\alpha +\frac{1}{d}\right)$}~

Take $x_0 \in A$ with maximal Euclidean norm. Then the half ball $H_r(x_0) \coloneqq B_r(x_0) \cap \{ x : \langle x - x_0 , x_0 \rangle > 0\}$ is included in $A^c$. We consider the competitor $\tilde{\nu} = \indic_{A \sqcup H_r(x_0)}$, with mass $\abs{\tilde{\nu}} = 1 + m$, where $m = \abs{H_r(x_0)} = c r^d$ for some constant $c = c(d)$. To irrigate $\tilde{\nu}$, we pay at most the cost of irrigation of $\nu$, plus the price for moving an extra mass $m$ from $0$ to $x_0$ along the irrigation plan, plus the cost for moving this mass to $B_r(x_0) \setminus A$, which we can bound by $C m^\alpha r$ thanks to Proposition \ref{prp:bt_law}, as follows:
\begin{align*}
\mathbf{X}_\alpha(\tilde{\nu}) = d_\alpha((1+m)\delta_0, \tilde{\nu}) &\leq d_\alpha((1+m)\delta_0, \nu+ m\delta_{x_0}) + d_\alpha(\nu+ m\delta_{x_0}, \nu + \indic_{H_r(x_0)})\\
&= \mathbf{X}_\alpha(\nu + m\delta_{x_0}) + d_\alpha(m\delta_{x_0},\indic_{H_r(x_0)})\\
&\leq e_\alpha + \alpha m z(x_0) +  C r m^\alpha,
\end{align*}
where $C = C(\alpha,d)$ is some positive constant. Combining this inequality with the following convexity inequality
\[\mathbf{X}_\alpha(\tilde{\nu})\ge e_\alpha (1+m)^{\alpha + \frac{1}{d}} \geq e_\alpha \left( 1 + \left(\alpha +\frac{1}{d}\right)m\right),\] and dividing by $m >0$, one gets:
\[e_\alpha\left(\alpha + \frac{1}{d}\right) \leq \alpha z(x_0) + C r^{1+d\alpha -d}.\]
Passing to the limit $r\to 0$, we obtain
\[z^\star \geq z(x_0) \geq \frac{e_\alpha}{\alpha}\left(\alpha +\frac{1}{d}\right).\qedhere\]
\end{proof}

\section{Hölder continuity of the landscape function}\label{sec:hölder}

The Hölder regularity of the landscape function has been proved in \cite{San} under some regularity assumptions on $\nu$ using Campanato spaces (these spaces were introduced in \cite{Cam}, see \cite[Section 2.3]{Giu} for a modern exposition). Namely, if $\nu$ is of the form $\nu = f \lbm_{\mres E}$ where the density $f(x)$ and the fraction of mass $\Theta_E(x,r) \coloneqq \frac{\abs*{E \cap B_r(x)}}{\abs*{B_r(x)}}$ lying in $E$ are bounded from below by some constant $c > 0$ for all $x\in E$ and all $r \leq \diam E$, then $z$ is $\beta$-Hölder continuous where
\[\beta \coloneqq d \left(\alpha - \left(1- \frac{1}{d}\right)\right) = 1 + d \alpha - d,\]
is a number which is strictly between $0$ and $1$ as $1 > \alpha > 1 - \frac{1}{d}$. Another proof for more general regularity assumptions on $\nu$ has been given in \cite{BraSol11}. In our case, we do not know a priori that $A$ is regular (on the contrary we suspect it has a fractal boundary), hence the Hölder regularity of $z$ does not follow from previous works. Exploiting the fact that $A$ is optimal, we are going to show that $z$ is $\beta$-Hölder continuous adapting classical computations to pass from Campanato to Hölder spaces. More precisely, setting $A_r(x) \coloneqq A \cap B_r(x)$ and $z_r(x)$ the mean of $z$ on $A_r(x)$, we are going to prove the following sequence of inequalities, for arbitrary $r \leq \diam A$:
\begin{align}
\fint_{A_r(x)} \abs{z-z_r(x)} &\leq C r^\beta,\\
\abs{z_r(x)-z_{r/2}(x)} &\leq C r^\beta,\\
z_r(x) - z(x) &\leq C r^\beta,\\
\abs{z(x) - z_r(x)} &\leq C r^\beta,\\
\abs{z_{\abs{y-x}}(x) - z_{\abs{y-x}}(y)} &\leq C \abs{y-x}^\beta.
\end{align}
Notice that the two last inequalities imply that $z$ is indeed $\beta$-Hölder continuous:
\begin{align*}
\abs{z(y)-z(x)} &\leq \abs{z(y) - z_{\abs{y-x}}(y)} + \abs{z_{\abs{y-x}}(x) - z_{\abs{y-x}}(y)} + \abs{z(x) - z_{\abs{y-x}}(x)}\\
&\leq C \abs{y-x}^\beta.
\end{align*}
The main difficulty we will encounter is that we will quite easily obtain estimates of the form
\[ \cdots \leq C \frac{r^\beta}{\Theta_A(x,r)^{1-\alpha}},\]
and will need to get rid of the term $\Theta_A(x,r)^{1-\alpha}$, i.e. treat the case when it becomes small.

\subsection{Main lemmas}

The following lemma will be key to prove the regularity of the landscape function.

\begin{lem}[Maximum deviation]\label{lem:dev_max}
There is a constant $C = C(d,\alpha) > 0$ such that the following holds:
\begin{equation}\label{eq:dev_max}
\forall y\in A_r(x), \qquad z^\star - z(y) \leq C \frac{r^\beta}{\Theta_{A^c}(x,r)^{1-\alpha}}.
\end{equation}
\end{lem}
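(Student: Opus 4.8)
I want to bound $z^\star - z(y)$ for $y \in A_r(x)$ by building a cheap competitor to the optimal $\nu = \indic_A$ that relies on the fact that $y$ is already well-irrigated (it lies in $A$, so $z(y) \le z^\star$ is small) while $A^c \cap B_r(x)$ occupies a large fraction $\Theta_{A^c}(x,r)$ of the ball. The idea is to move a bit of mass of $A$ into $A^c \cap B_r(x)$: concretely, pick some measurable subset $S \subseteq A^c \cap B_r(x)$ of mass $m$, remove a mass $m$ from $A$ where $z$ is large (i.e. near the level $z^\star$, say from $A \setminus B_{z^\star}$-sublevels — in practice from the part of $A$ outside some sublevel set), and relocate it: first carry the mass $m$ along the optimal irrigation plan to the point $y$ (paying at most $\alpha m z(y)$ via the first variation inequality of Proposition \ref{prop:first_var}, since $y$ is the endpoint of an $\eta$-good curve), then spread it from $y$ onto $S$, which costs at most $C m^\alpha r$ by the scaling law (Proposition \ref{prp:bt_law}) because everything happens inside $B_r(x)$ and $y \in B_r(x)$. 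Comparing with $e_\alpha = \mathbf{X}_\alpha(\nu)$ via the optimality of $\nu$, the mass we removed had $z$-values at least $z^\star - \delta$ for $\delta$ small (by the structure $A = \{z \le z^\star\}$), so the gain from removing it is roughly $\alpha m (z^\star - z(y))$, and we get an inequality of the shape
\[
\alpha m (z^\star - z(y)) \;\le\; C m^\alpha r + (\text{convexity correction}),
\]
plus we must also use Lemma \ref{lem:scaling} to control the change in total mass, as in Step 4 of Theorem \ref{thm:exist}.

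**Key steps, in order.** First I would fix $y \in A_r(x)$ and a small mass $m \le \min(\abs{A^c \cap B_r(x)}, \abs{A})$, choosing $S \subseteq A^c \cap B_r(x)$ with $\abs{S} = m$; since $\abs{A^c \cap B_r(x)} = \Theta_{A^c}(x,r)\, c_d r^d$, this forces $m \le \Theta_{A^c}(x,r)\, c_d r^d$, which is where the $\Theta_{A^c}$ denominator enters. Second, define the competitor $\tilde\nu = \indic_{A'} + \indic_S$ where $A' \subseteq A$ is $A$ minus a mass-$m$ slice taken from the highest sublevels, so $\abs{\tilde\nu} = 1$ and $\tilde\nu \le 1$ — keeping unit mass avoids the convexity correction entirely, so actually I would do this rather than perturbing the total mass. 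Third, estimate $\mathbf{X}_\alpha(\tilde\nu)$: using the first variation inequality of $\mathbf{X}_\alpha$ at $\nu$ against the intermediate measure $\nu - (\text{slice}) + m\delta_y$, then the scaling law to go from $m\delta_y$ to $\indic_S$, obtaining $\mathbf{X}_\alpha(\tilde\nu) \le e_\alpha - \alpha \int_{\text{slice}} z \diff\nu + \alpha m z(y) + C m^\alpha r$. Fourth, invoke optimality $\mathbf{X}_\alpha(\tilde\nu) \ge e_\alpha$ and use that the removed slice sits at $z$-levels $\ge z^\star$ (approximately; more carefully one integrates over $\{z \ge k\}$ for $k < z^\star$ and lets $k \to z^\star$, exactly as in Step 1 of Theorem \ref{thm:exist}), giving $\alpha m (z^\star - z(y)) \le C m^\alpha r$, i.e. $z^\star - z(y) \le C m^{\alpha - 1} r$. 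Fifth, optimize over $m$: since $m^{\alpha-1}$ is decreasing in $m$ (as $\alpha < 1$), take $m$ as large as allowed, $m \sim \Theta_{A^c}(x,r)\, r^d$, which yields $z^\star - z(y) \le C\, \Theta_{A^c}(x,r)^{\alpha-1}\, r^{d(\alpha-1)}\, r = C\, r^{1 + d\alpha - d} / \Theta_{A^c}(x,r)^{1-\alpha} = C r^\beta / \Theta_{A^c}(x,r)^{1-\alpha}$, which is exactly \eqref{eq:dev_max}.

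**Main obstacle.** The delicate point is the last-mile estimate $d_\alpha(m\delta_y, \indic_S) \le C m^\alpha r$: the scaling law Proposition \ref{prp:bt_law} is stated as an upper bound in terms of mass and diameter of the supports, and here $\supp(m\delta_y) \cup \supp(\indic_S) \subseteq B_r(x) \cup \{y\}$ has diameter $\le 2r + \abs{y - x} \le 3r$, so this is fine — but one should double-check that the proposition applies with $\mu = m\delta_y$, $\nu = \indic_S$ of equal mass $m$ and $\mu \wedge \nu = 0$ (disjoint supports, since $y \in A$ and $S \subseteq A^c$, unless $y$ happens to lie on $\partial A$, in which case a negligible adjustment handles it). A secondary subtlety, also present in Theorem \ref{thm:exist}, is the "slice from the highest sublevels" construction: one cannot literally remove mass from $\{z = z^\star\}$ if that set is small, so the honest argument replaces $z^\star$ by $k < z^\star$, removes mass from $\{z > k\}$ (which has $\nu$-mass $\ge $ something positive by definition of $z^\star$, so for $m$ small enough this is possible), derives $z^\star - z(y) \le k \cdot(\text{stuff})^{-1}\cdots$ — more precisely one gets $\alpha m(k - z(y)) \le C m^\alpha r$ and then sends $k \to z^\star$. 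Everything else is routine bookkeeping of constants depending only on $d$ and $\alpha$.
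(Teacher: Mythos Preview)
Your mass-preserving competitor $\tilde\nu = \indic_{A'} + \indic_S$ with $\abs{\tilde\nu} = 1$ runs into a real obstacle at the step where you ``let $k \to z^\star$''. The removed slice lies in $\{k < z \le z^\star\}$, and for the argument to yield $z^\star - z(y) \le C m^{\alpha-1} r$ you need both (i) the slice to have mass $m = \abs{A^c \cap B_r(x)} \sim \Theta_{A^c}(x,r)\, r^d$, and (ii) $k$ arbitrarily close to $z^\star$. These two requirements are coupled: fixing $m$ forces $k = k(m)$ with $\abs{\{k(m) < z \le z^\star\}} \ge m$, and there is no a priori upper bound on $z^\star - k(m)$ in terms of $m$. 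If, say, the distribution function $s \mapsto \abs{\{z > z^\star - s\}}$ vanished faster than any polynomial as $s \to 0^+$, then $z^\star - k(m)$ would decay more slowly than any power of $r$ and your bound would be useless. This is \emph{not} ``exactly as in Step~1 of Theorem~\ref{thm:exist}'': there the final inequality $\alpha k \le e_\alpha(\alpha + 1/d)$ was independent of $m$, so sending $k \to z^\star$ cost nothing; here your inequality $k - z(y) \le C m^{\alpha-1} r$ depends on $m$, and you cannot decouple $k$ from $m$.

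The paper avoids this by doing precisely what you discarded in your second paragraph: it takes $\tilde\nu = \indic_{A \cup B_r(x)}$ of mass $1 + m$ with $m = \abs{B_r(x)\setminus A}$, bounds $\mathbf{X}_\alpha(\tilde\nu)$ from above via first variation (extra mass to $y$, then spread over $B_r(x)\setminus A$) and from below via the scaling lemma $\mathbf{X}_\alpha(\tilde\nu) \ge e_\alpha(1+m)^{\alpha+1/d}$, then uses the convexity bound $(1+m)^{\alpha+1/d} \ge 1 + (\alpha+1/d)m$. The ``convexity correction'' you tried to sidestep is exactly what makes the argument work: combined with the identity $\alpha z^\star = e_\alpha(\alpha+1/d)$ from Theorem~\ref{thm:exist}, it gives $\alpha m z^\star \le \alpha m z(y) + C m^\alpha r$ directly --- no slice, no limiting in $k$, no information on the level-set structure of $z$. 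Your instinct in the first paragraph, pointing to Step~4 of Theorem~\ref{thm:exist} and Lemma~\ref{lem:scaling}, was the correct one.
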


\begin{proof}
We consider the competitor $\tilde{\nu} = \indic_{A \cup B_r(x)}$, with mass $\abs{\tilde{\nu}} = 1 + m$ where $m = \abs{B_r(x) \setminus A}$. For any $y\in A_r(x)$, let us irrigate $\tilde{\nu}$ from $0$ by irrigating $\nu$ from $0$, moving an extra mass $m$ from $0$ to $y$ along the irrigation plan, then irrigating $\indic_{B_r(x)\setminus A}$ from this mass at $y$. Using Lemma  \ref{lem:scaling} and Proposition \ref{prp:bt_law}, we have
\[e_\alpha(1+m)^{\alpha + \frac{1}{d}} \leq \mathbf{X}_\alpha(\tilde{\nu}) \leq \mathbf{X}_\alpha(\nu) + \alpha \int z \diff (m\delta_y) + C r m^\alpha.\]
By convexity,
\[e_\alpha\left(1 + \left(\alpha+\frac{1}{d}\right)m\right) \leq (1+m)^{\alpha + \frac{1}{d}} e_\alpha \leq e_\alpha + \alpha m z(y) + C r m^\alpha,\]
thus, knowing that $e_\alpha \left(\alpha + \frac{1}{d}\right) = \alpha z^\star$ by \eqref{eq: z*_value}:
\[\alpha m z^\star \leq \alpha m z(y) + C r m^\alpha.\]
By definition, $m = \omega_d r^d \Theta_{A^c}(x,r)$ where $\omega_d$ is the volume on the unit $d$-dimensional ball, hence
\[ z^\star - z(y) \leq C r (r^d \Theta_{A^c}(x,r))^{\alpha -1} = C \frac{r^\beta}{\Theta_{A^c}(x,r)^{1-\alpha}}.\qedhere\]
\end{proof}

\begin{rem}
One can see that if $\Theta_A(x,r)$ becomes small, then $\Theta_{A^c}(x,r)$ is large (close to $1$), and actually all values of $z$ in $A_r(x)$ become close to the same value $z^\star$ up to $C r^\beta$.
\end{rem}

\begin{lem}[Mean deviation]\label{lem:mean_dev}
There is some constant $C = C(d,\alpha) > 0$ such that
\[ \fint_{A_r(x)} \abs{z(y) - z_r(x)} \diff y \leq C r^\beta\]
for all $r >0$ and all $x \in A$.
\end{lem}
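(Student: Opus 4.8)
The plan is to split into two regimes according to the size of $\Theta_A(x,r)$, handling the ``fat'' case by a standard Campanato-type estimate and the ``thin'' case by appealing to the Maximum deviation lemma.

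First I would fix a threshold, say $\Theta_A(x,r) \geq \frac12$ versus $\Theta_A(x,r) < \frac12$. In the thin case $\Theta_A(x,r) < \frac12$, we have $\Theta_{A^c}(x,r) > \frac12$, so Lemma \ref{lem:dev_max} gives $z^\star - z(y) \leq C r^\beta$ uniformly for all $y \in A_r(x)$; since also $z(y) \leq z^\star$ on $A$, all values of $z$ on $A_r(x)$ lie in an interval of length $C r^\beta$, hence so does their mean $z_r(x)$, and therefore $\fint_{A_r(x)} |z(y) - z_r(x)| \leq C r^\beta$ trivially. This disposes of the degenerate regime.

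The real content is the fat case $\Theta_A(x,r) \geq \frac12$, where $|A_r(x)| \geq \frac12 \omega_d r^d$ is comparable to $|B_r(x)|$. Here I would estimate $\fint_{A_r(x)} |z - z_r(x)|$ by controlling the oscillation of $z$ on $A_r(x)$ directly. The natural approach: for $y, y' \in A_r(x)$, bound $|z(y) - z(y')|$ — or rather, bound $\fint\fint |z(y)-z(y')| \,dy\,dy'$, which dominates $\fint |z - z_r(x)|$ up to a factor $2$. To get such a bound one uses a competitor argument: modify $\nu$ by shifting a small mass $m = \omega_d \rho^d$ concentrated near one point of $A_r(x)$ and compare irrigation costs, invoking Proposition \ref{prop:first_var} (first variation of $\mathbf{X}_\alpha$) together with the scaling Lemma \ref{lem:scaling} and the scaling law Proposition \ref{prp:bt_law}; this produces estimates of the form $|z(y) - z(y')| \lesssim r^\beta / \Theta_A(x,r)^{1-\alpha}$, and since $\Theta_A(x,r) \geq \frac12$ in this regime, the denominator is harmless and we get $\lesssim r^\beta$. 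An alternative and perhaps cleaner route, closer to classical Campanato theory, is to run the telescoping $z_r(x) - z_{r/2}(x)$, $z_r(x) - z(x)$ machinery announced just before the lemma; but since those inequalities are stated as consequences to be proved \emph{after} this lemma, I would instead argue self-containedly via the competitor/first-variation estimate.

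I expect the main obstacle to be precisely the fat case: producing a clean pointwise (or averaged) oscillation bound $|z(y)-z(y')| \leq C r^\beta \Theta_A(x,r)^{-(1-\alpha)}$ requires carefully choosing the perturbed measure $\tilde\nu$ (adding a ball of radius $\sim \rho$ somewhere in $A_r(x)$, or redistributing mass between two sub-balls of $A_r(x)$) and correctly accounting, via Proposition \ref{prp:bt_law}, for the cost of transporting the displaced mass $m$ across a distance $\lesssim r$ — the exponent bookkeeping $r \cdot m^\alpha = r \cdot (\rho^d)^\alpha$ against $m \cdot r^\beta$-type terms, optimized over $\rho$, is what yields the power $r^\beta = r^{1+d\alpha-d}$. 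The appearance and necessity of the factor $\Theta_A(x,r)^{-(1-\alpha)}$, and the observation that it is controlled only because we have restricted to $\Theta_A(x,r) \geq \frac12$, is the conceptual heart; the thin case is then exactly the complementary situation rescued by Lemma \ref{lem:dev_max}.
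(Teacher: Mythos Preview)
Your overall architecture is right and matches the paper: split according to whether $\Theta_A(x,r) \geq \tfrac12$, handle the thin case via Lemma~\ref{lem:dev_max} exactly as you describe, and in the fat case aim for a Campanato-type estimate $\fint_{A_r(x)} |z - z_r(x)| \leq C r^\beta \Theta_A(x,r)^{-(1-\alpha)}$ via a competitor.

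The gap is in your fat-case competitor. The specific perturbations you name --- ``adding a ball of radius $\rho$'' or ``redistributing mass between two sub-balls of $A_r(x)$'', then optimizing over $\rho$ --- do not produce the needed estimate. Adding a ball outside $A$ controls $z^\star - z(y)$ (this is Lemma~\ref{lem:dev_max} again) and is useless precisely when $\Theta_{A^c}$ is small. Redistributing mass $m \sim \rho^d$ between two small geometric balls inside $A_r(x)$ runs into the constraint $\tilde\nu \leq 1$ (you cannot double the density on a ball already contained in $A$); and even setting that aside, balancing $m^\alpha r$ against $m\,|z(y)-z(y')|$ gives $|z(y)-z(y')| \leq C r \rho^{-d(1-\alpha)}$, which blows up as $\rho \to 0$ and has no nontrivial optimum. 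The pointwise two-sided bound $|z(y)-z(y')| \leq C r^\beta \Theta_A(x,r)^{-(1-\alpha)}$ you are aiming for is stronger than what is actually available at this stage.

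What the paper does is take the \emph{median} $\bar z_r(x)$ of $z$ on $A_r(x)$, split $A_r(x) = A^- \sqcup A^+$ into sub- and super-median halves of equal measure, and use the mass-preserving competitor $\tilde\nu = \indic_A - \indic_{A^+} + \indic_{A^-}$. The median is chosen so that $\int z \,\diff(\nu-\tilde\nu) = \int_{A_r(x)} |z - \bar z_r(x)|$ \emph{exactly}. Since $|\tilde\nu|=1$, one invokes not the scaling lemma but the triangle inequality for $d_\alpha$ combined with the first variation: $\alpha \int z\,\diff(\nu-\tilde\nu) \leq d_\alpha(\delta_0,\nu) - d_\alpha(\delta_0,\tilde\nu) \leq d_\alpha(\nu,\tilde\nu) \leq C\,|A_r(x)|^\alpha r$, yielding the integrated bound directly (then one passes from median to mean). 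No auxiliary scale $\rho$ enters; the competitor moves the full mass $|A_r(x)|$ across distance $\leq 2r$, which is exactly what makes the exponent come out as $r^\beta$.
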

\begin{proof}
We will first show that
\[ \fint_{A_r(x)} \abs*{z(y) - z_r(x)} \diff y \leq C \frac{r^\beta}{\Theta_A(x,r)^{1-\alpha}}.\]
Denoting by $\bar{z}_r(x)$ the central median of $z$ on the set $A_r(x)$, there is a disjoint union $A_r(x) = A^- \sqcup A^+$ such that $\abs{A^-} = \abs{A^+} = \frac{\abs{A_r(x)}}{2}$ and $z \leq \bar{z}_r(x)$ on $A^-$, $z \geq \bar{z}_r(x)$ on $A^+$.
Let us consider the competitor $\tilde{\nu} = \indic_A - \indic_{A^+} + \indic_{A^-}$. By the first variation lemma:
\[\mathbf{X}_\alpha(\tilde{\nu}) \leq \mathbf{X}_\alpha(\nu) + \alpha \int z \diff(\tilde{\nu}-\nu).\]
Recall that $\mathbf{X}_\alpha(\rho) = d_\alpha(\delta_0,\rho)$ when $\rho$ is a probability measure, which is the case for $\nu$ and $\tilde{\nu}$, and that $d_\alpha$ is a distance. Thus by the triangle inequality:
\[\alpha \int z \diff (\nu - \tilde{\nu}) \leq d_\alpha(\delta_0,\nu) - d_\alpha(\delta_0,\tilde{\nu}) \leq d_\alpha(\nu,\tilde{\nu}).\]
We know that $d_\alpha(\nu,\tilde{\nu}) \leq C \abs{\tilde{\nu}-\nu}^\alpha \diam(\supp(\tilde{\nu}-\nu)) \leq C \abs{A_r(x)}^\alpha r$ for some $C = C(\alpha,d) > 0$. Moreover notice that
\begin{align*}
\int z \diff(\nu-\tilde{\nu}) &= \int_{A^+} z(y) \diff y - \int_{A^-} z(y) \diff y\\
&= \int_{A^+} (z(y)-\bar{z}_r(x)) \diff y + \int_{A^-} (\bar{z}_r(x)-z(y)) \diff y\\
 &= \int_{A_r(x)} \abs{z(y) - \bar{z}_r(x)} \diff y.
 \end{align*}
Consequently:
\[\fint_{A_r(x)} \abs{z(y) - \bar{z}_r(x)} \diff y \leq C \abs{A_r(x)}^{\alpha -1} r \leq C \frac{\abs{A_r(x)}^{\alpha -1}}{\abs{B_r(x)}^{\alpha -1}} r^{1+d(\alpha -1)} = C \frac{r^\beta}{\Theta_A(x,r)^{1-\alpha}}.\]
Moreover, one has
\[\abs{z_r(x) - \bar{z}_r(x)} = \abs*{\fint_{A_r(x)} z(y) - \bar{z}_r(x)\diff y}  \leq \fint_{A_r(x)} \abs{z(y) - \bar{z}_r(x)}\diff y \leq C \frac{r^\beta}{\Theta_A(x,r)^{1-\alpha}}\]
which leads to
\[ \fint_{A_r(x)} \abs{z(y) - z_r(x)} \diff y = \fint_{A_r(x)} \abs{z(y) - \bar{z}_r(x)}\diff y + \abs{z_r(x) - \bar{z}_r(x)} \leq C \frac{r^\beta}{\Theta_A(x,r)^{1-\alpha}}.\]
Now we get rid of $\Theta_A(x,r)^{1-\alpha}$. If $\Theta_A(x,r) \geq 1/2$, we get the desired inequality. On the other hand, if $\Theta_{A^c}(x,r) \geq 1/2$, by Lemma \ref{lem:dev_max}, we have
\[0 \leq z^\star - z(y) \leq C r^\beta, \qquad \forall y \in A_r(x),\]
which also implies that
\[0 \leq z^\star - z_r(x) \leq C r^\beta.\]
By these two inequalities, we have
\[\abs{z(y)-z_r(x)} \leq C r^\beta, \qquad \forall y \in A_r(x).\]
Now, taking the mean over $A_r(x) \ni y$ leads to the wanted inequality as well:
\[ \fint_{A_r(x)} \abs{z(y) - z_r(x)} \diff y \leq C r^\beta.\qedhere\]
\end{proof}
\begin{rem}
Notice that the estimate
\[ \fint_{A_r(x)} \abs{z(y) - z_r(x)} \diff y \leq C \frac{r^\beta}{\Theta_A(x,r)^{1-\alpha}}\]
is valid in general: we only use the fact that $\nu$ is an indicator function (a density bounded from below would suffice). The optimality of $\nu$ comes into play to to get rid of $\Theta_A(x,r)$.
\end{rem}

\subsection{Hölder regularity}

\begin{prop}[Small-scale difference]\label{prop: small_scale}
For all $x \in A$ and all $r > 0$  one has
\[\abs{z_r(x)-z_{r/2}(x)} \leq C r^\beta.\]
\end{prop}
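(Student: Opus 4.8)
The plan is to run a standard Campanato-type comparison between the two averages, feeding in Lemma~\ref{lem:mean_dev} as the main estimate. Since $z_r(x)$ is a constant and $A_{r/2}(x)\subseteq A_r(x)$, I would write
\[
\abs{z_r(x)-z_{r/2}(x)} = \abs*{\fint_{A_{r/2}(x)}\bigl(z(y)-z_r(x)\bigr)\diff y} \leq \fint_{A_{r/2}(x)}\abs{z(y)-z_r(x)}\diff y \leq \frac{\abs{A_r(x)}}{\abs{A_{r/2}(x)}}\fint_{A_r(x)}\abs{z(y)-z_r(x)}\diff y,
\]
and then bound the last average by $Cr^\beta$ using Lemma~\ref{lem:mean_dev}. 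Everything thus reduces to controlling the volume ratio $\abs{A_r(x)}/\abs{A_{r/2}(x)} = 2^d\,\Theta_A(x,r)/\Theta_A(x,r/2)$. The main obstacle is precisely the recurring issue flagged at the beginning of this section: this ratio can a priori be enormous when $A$ occupies only a tiny fraction of $B_{r/2}(x)$, so a dichotomy on the size of $\Theta_A(x,r/2)$ is needed.

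If $\Theta_A(x,r/2)\geq \tfrac12$, then $\abs{A_{r/2}(x)} \geq \tfrac12\abs{B_{r/2}(x)} = 2^{-(d+1)}\abs{B_r(x)} \geq 2^{-(d+1)}\abs{A_r(x)}$, so the volume ratio is at most the dimensional constant $2^{d+1}$; plugging this into the chain above gives $\abs{z_r(x)-z_{r/2}(x)}\leq Cr^\beta$ directly.

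In the complementary case $\Theta_{A^c}(x,r/2)\geq\tfrac12$ I would drop the averaging argument and instead pin both means to $z^\star$ via Lemma~\ref{lem:dev_max}. Applying it at scale $r/2$ yields $0\leq z^\star-z(y)\leq C(r/2)^\beta/(1/2)^{1-\alpha}\leq Cr^\beta$ for every $y\in A_{r/2}(x)$ (a nonempty set, since $x\in A$), hence $0\leq z^\star-z_{r/2}(x)\leq Cr^\beta$ after averaging. For the scale $r$ one first observes that $\abs{A^c\cap B_r(x)} \geq \abs{A^c\cap B_{r/2}(x)} \geq \tfrac12\abs{B_{r/2}(x)} = 2^{-(d+1)}\abs{B_r(x)}$, so $\Theta_{A^c}(x,r)\geq 2^{-(d+1)}$ stays bounded below by a dimensional constant; Lemma~\ref{lem:dev_max} at scale $r$ then gives $0\leq z^\star - z_r(x)\leq Cr^\beta$ in the same way. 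Combining the two bounds, $\abs{z_r(x)-z_{r/2}(x)}\leq \abs{z_r(x)-z^\star}+\abs{z^\star-z_{r/2}(x)}\leq Cr^\beta$, which closes this case and finishes the proof. I expect the only delicate point to be the bookkeeping in the dichotomy (and the elementary observation that a small $\Theta_A$ at scale $r/2$ keeps $\Theta_{A^c}$ bounded below at scale $r$); once that is in place, the rest is a routine assembly of Lemmas~\ref{lem:mean_dev} and~\ref{lem:dev_max}.
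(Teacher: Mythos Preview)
Your proposal is correct and follows essentially the same route as the paper: the same Campanato comparison via Lemma~\ref{lem:mean_dev}, the same dichotomy on $\Theta_A(x,r/2)$, and the same use of Lemma~\ref{lem:dev_max} together with the observation $\Theta_{A^c}(x,r)\geq 2^{-d}\Theta_{A^c}(x,r/2)$ in the bad case. The only cosmetic difference is that the paper applies Lemma~\ref{lem:dev_max} once at scale $r$ (which already covers $A_{r/2}(x)\subseteq A_r(x)$) rather than separately at both scales.
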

\begin{proof}
First we show that
\[\abs{z_r(x)-z_{r/2}(x)} \leq C \frac{r^\beta}{\Theta_A(x,r)^{1-\alpha}}.\]
Indeed, by Lemma \ref{lem:mean_dev},
\begin{align*}
\abs{z_r(x)-z_{r/2}(x)} &\leq \frac{\int_{A_{r/2}(x)} \abs{z(y)-z_r(x)}\diff y}{\abs{A_{r/2}(x)}} \leq \frac{\abs{A_r(x)}}{\abs{A_{r/2}(x)}} \fint_{A_r(x)} \abs{z(y)-z_{r}(x)}\diff y\\
&\leq 2^d \frac{\Theta_A(x,r)}{\Theta_A(x,r/2)} C r^\beta \leq C \frac{r^\beta}{\Theta_A(x,r/2)}.
\end{align*}
As before, if $\Theta_A(x,r/2) \geq 1/2$ we get the desired estimate. Otherwise $\Theta_{A^c}(x,r/2) \geq 1/2$ and $\Theta_{A^c}(x,r) \geq 2^{-d} \Theta_{A^c}(x,r/2) \geq 2^{-1-d}$. Now, by Lemma \ref{lem:dev_max},
\[0 \le z^\star - z(y) \leq C \frac{r^\beta}{\Theta_{A^c}(x,r)} \leq C r^\beta, \qquad \forall y \in A_r(x).\]
Consequently $0 \le z^\star - z_{r/2}(x) \leq C r^\beta$ and $0 \le z^\star - z_r(x) \leq C r^\beta$ which implies that
\[\abs{z_r(x) - z_{r/2}(x)} \leq C r^\beta.\qedhere\]
\end{proof}

\begin{lem}[Lower deviation to the mean]
There is a constant $C = C(d,\alpha) > 0$ such that for all $x\in A$ and all $r>0$ one has:
\begin{equation}\label{low_dev_mean}
\forall y \in A_r(x), \qquad z_r(x) - z(y) \leq C r^\beta.
\end{equation}
\end{lem}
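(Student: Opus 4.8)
The plan is to argue by competitors, in the spirit of Lemmas~\ref{lem:dev_max} and~\ref{lem:mean_dev}. Since \eqref{low_dev_mean} is a \emph{lower} bound on $z(y)$, lower semicontinuity of $z$ is of no use here (it only controls $z(y)$ from above, by limits of averages), so the estimate must be extracted from the optimality of $A$. As in those lemmas I would split into the cases $\Theta_{A^c}(x,r)\ge\tfrac12$ and $\Theta_A(x,r)\ge\tfrac12$, and one may assume $r\le\diam A$, the case $r\ge\diam A$ being trivial (then $A_r(x)=A$, $z_r(x)=e_\alpha$, $z(y)\ge0$, and $r^\beta\ge(\diam A)^\beta\ge c>0$, so the inequality holds once $C$ is large enough).

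If $\Theta_{A^c}(x,r)\ge\tfrac12$, Lemma~\ref{lem:dev_max} gives $z^\star-z(y)\le Cr^\beta$ for every $y\in A_r(x)$; and since $A_r(x)\subseteq A=\{z\le z^\star\}$ we have $z_r(x)=\fint_{A_r(x)}z\le z^\star$, whence $z_r(x)-z(y)\le z^\star-z(y)\le Cr^\beta$. This argument uses only $z_r(x)\le z^\star$ together with a bound of the form $z^\star-z(y)\le Cr^\beta$; the whole point of the other case is therefore to recover such a bound, or a substitute for it, whenever it is needed.

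If $\Theta_A(x,r)\ge\tfrac12$, the Campanato estimate of Lemma~\ref{lem:mean_dev} is well-behaved at scale $r$, and telescoping Proposition~\ref{prop: small_scale} gives $\abs{z_r(x)-z_{r/2^k}(x)}\le Cr^\beta$ uniformly in $k$, hence $z_{r/2^k}(x)\to\hat z(x)$ with $\abs{z_r(x)-\hat z(x)}\le Cr^\beta$; it then remains to prove $z(y)\ge\hat z(x)-Cr^\beta$ for $y\in A_r(x)$. Here I would dichotomise once more, pointwise in $y$. If there is a scale $\rho\le 3r$ with $\Theta_{A^c}(y,\rho)\ge\tfrac12$, then Lemma~\ref{lem:dev_max} applied around $y$ at that scale gives $z^\star-z(y)\le C\rho^\beta\le Cr^\beta$, and we conclude via $z_r(x)\le z^\star$ as in the first case. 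Otherwise $\Theta_A(y,\rho)>\tfrac12$ for all $\rho\le 3r$, so $y$ lies in a region where $A$ is thick at all scales $\lesssim r$; there the \emph{pre-optimality} Campanato--Morrey regularity of \cite{San,BraSol11} — which, as observed after Lemma~\ref{lem:mean_dev}, uses only that $\nu=\indic_A$ has density bounded below — applies and shows that $z$ is $\beta$-Hölder near $y$, so that $z(y)$ coincides with its Campanato limit and $\abs{z(y)-z_r(y)}\le Cr^\beta$; combining this with a center-change estimate $\abs{z_r(x)-z_r(y)}\le Cr^\beta$ — obtained by telescoping both averages down to the common scale $\abs{x-y}$ and changing centers with the scaling law, Proposition~\ref{prp:bt_law} — yields $z_r(x)-z(y)\le Cr^\beta$.

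I expect the last sub-case to be the real obstacle: there no competitor is available near $y$ (there is no empty room), so the lower bound on $z(y)$ has to be produced by regularity, which forces one to use the \emph{local} form of the pre-optimality Hölder theory and, crucially, the fact that on thick regions the lower semicontinuous landscape function agrees with its continuous Campanato representative. The nested dichotomy above — first on $\Theta_{A^c}(x,r)$, then, in the degenerate case, on the existence of a thin scale near $y$ — is arranged precisely so that at every point and scale exactly one of Lemma~\ref{lem:dev_max} and the pre-optimality Campanato regularity is operative, which is what produces the bound $Cr^\beta$ uniformly in $\Theta_A(x,r)$.
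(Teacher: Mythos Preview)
Your dichotomy on $\Theta_{A^c}(x,r)$ matches the paper, and the thin case $\Theta_{A^c}(x,r)\ge\tfrac12$ is handled exactly as in the paper via Lemma~\ref{lem:dev_max}. But in the thick case you take a long detour where the paper uses a single direct competitor. Set $m=\abs{A_r(x)}$ and consider $\tilde\nu=\nu-\indic_{A_r(x)}+m\delta_y$ (reroute all the mass destined for $A_r(x)$ through $y$, then spread it back). By Proposition~\ref{prop:first_var}, $\mathbf{X}_\alpha(\tilde\nu)\le\mathbf{X}_\alpha(\nu)+\alpha\bigl(mz(y)-\int_{A_r(x)}z\bigr)$, while by the triangle inequality for $d_\alpha$ and Proposition~\ref{prp:bt_law}, $\mathbf{X}_\alpha(\nu)\le\mathbf{X}_\alpha(\tilde\nu)+Cm^\alpha r$. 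Combining and dividing by $m$ gives
\[
z_r(x)-z(y)\le C m^{\alpha-1}r = C\,\frac{r^\beta}{\Theta_A(x,r)^{1-\alpha}},
\]
which settles the case $\Theta_A(x,r)\ge\tfrac12$ at once. This is a pre-optimality argument (it uses only that $\eta$ is optimal for $\nu$, not that $A$ solves \eqref{pb:R}).

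Your nested dichotomy could probably be pushed through, but the sub-step you flag as the real obstacle --- identifying $z(y)$ with its Campanato limit $L(y)$ when $y$ is thick at all scales $\le 3r$ --- is precisely where this competitor is hidden. Lower semicontinuity gives only $z(y)\le L(y)$, the wrong direction for a \emph{lower} bound on $z(y)$; the opposite inequality $z_\rho(y)-z(y)\le C\rho^\beta$ is exactly the rerouting-through-$y$ argument above. So your plan is not wrong but circular in spirit: it outsources the essential inequality to the H\"older theorem of \cite{San}, whose proof already contains the one-line competitor that settles the lemma directly. On top of that, the center-change estimate $\abs{z_r(x)-z_r(y)}\le Cr^\beta$ you invoke is essentially Lemma~\ref{lem:large_scale} plus telescoping, and your application of \cite{San} under a merely \emph{local} thickness hypothesis at $y$ (rather than the global one assumed there) would require going into that proof anyway.
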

\begin{proof}
First we show that
\[z_r(x) - z(y) \leq C \frac{r^\beta}{\Theta_A(x,r)^{1-\alpha}}.\]
Remove the mass $m=\abs{A_r(x)}$ going to $A_r(x)$ from the irrigation plan, make it travel along the plan to any fixed $y \in A_r(x)$ and then send it to $A_r(x)$: this should cost more. This implies
\[\alpha m z(y) - \alpha \int_{A_r(x)} z + C m^\alpha r \geq 0,\]
which may be rewritten as
\[z_r(x) - z(y) \leq C m^{\alpha-1} r \leq C \frac{r^\beta}{\Theta_A(x,r)^{1-\alpha}}.\]
Now if $\Theta_A(x,r) \geq 1/2$ one gets the desired result. Otherwise $\Theta_{A^c}(x,r) \geq 1/2$ and Lemma \ref{lem:dev_max} yields:
\[0 \le z^\star - z(y) \leq C r^\beta, \qquad \forall y \in A_r(x).\]
Thus $0\le z^\star - z_r(x)\leq C r^\beta$ and for any fixed $y \in A_r(x)$, 
\[\abs{z_r(x) - z(y)} \leq \abs{z_r(x) - z^\star}+\abs{z^\star - z(y)} \leq C r^\beta,\]
from which we also get the wanted inequality.
\end{proof}

\begin{lem}[Deviation to the mean]\label{lem:dev_mean}
For all $x\in A$ and all $r > 0$, one has
\[\abs{z(x)-z_r(x)} \leq C r^\beta.\]
\end{lem}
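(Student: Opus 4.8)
The plan is to run the standard Campanato-type telescoping argument: use Proposition \ref{prop: small_scale} to sum the dyadic increments of the averages $z_{r/2^k}(x)$, and then pin the limit of these averages to $z(x)$ itself, which is the only point requiring a little care.

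First I would dispose of one half of the inequality for free. Since $x\in A$, we have $x\in A_r(x) = A\cap B_r(x)$, so the Lower deviation to the mean inequality \eqref{low_dev_mean} applied at $y=x$ gives immediately
\[ z_r(x) - z(x) \leq C r^\beta. \]
For the reverse inequality I would telescope along the dyadic radii $r_k \coloneqq r/2^k$. Proposition \ref{prop: small_scale} yields $\abs{z_{r_{k+1}}(x) - z_{r_k}(x)} \leq C r_k^\beta = C\,2^{-k\beta} r^\beta$, and since $\beta>0$ the geometric series $\sum_k 2^{-k\beta}$ converges, so for every $k$
\[ z_r(x) \;\geq\; z_{r_k}(x) - \sum_{j=0}^{k-1} C\, 2^{-j\beta} r^\beta \;\geq\; z_{r_k}(x) - C' r^\beta, \qquad C' \coloneqq \frac{C}{1-2^{-\beta}}. \]

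It then remains to check that $\liminf_{k\to\infty} z_{r_k}(x) \geq z(x)$, and this is where the lower semicontinuity of $z$ enters: given $\eps>0$ there is $\rho>0$ with $z(y)\geq z(x)-\eps$ on $B_\rho(x)$, hence $z_{r_k}(x) = \fint_{A_{r_k}(x)} z \geq z(x)-\eps$ as soon as $r_k<\rho$. (In fact, combining this with \eqref{low_dev_mean} at scale $r_k$ and $y=x$ one even gets $\lim_k z_{r_k}(x) = z(x)$, but only the $\liminf$ is needed.) Passing to the $\liminf$ in $k$ in the displayed inequality gives $z_r(x) \geq z(x) - C' r^\beta$, i.e. $z(x)-z_r(x)\leq C' r^\beta$, and together with the first bound this proves $\abs{z(x)-z_r(x)}\leq C r^\beta$.

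I do not expect a real obstacle here, since the analytic work has already been done in Lemma \ref{lem:dev_max}, Lemma \ref{lem:mean_dev} and Proposition \ref{prop: small_scale}. The only mildly delicate point is that one cannot simply invoke "$z_r(x)\to z(x)$ by continuity of $z$", because the continuity of $z$ is exactly what we are in the process of establishing; instead the limit of the averages must be identified with $z(x)$ using the one-sided lower-semicontinuity bound on one side and the one-sided estimate \eqref{low_dev_mean} on the other.
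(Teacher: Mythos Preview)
Your proof is correct and follows essentially the same approach as the paper: telescope along dyadic radii using Proposition \ref{prop: small_scale}, and identify the limit of the averages with $z(x)$ via the lower semicontinuity of $z$ for one inequality and \eqref{low_dev_mean} for the other. The only cosmetic difference is that the paper works with $f(r)=\abs{z(x)-z_r(x)}$ throughout and proves $f(\eps)\to 0$ at the end, whereas you split the two signs from the start; the ingredients and logic are the same.
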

\begin{proof}
By Proposition \ref{prop: small_scale}, one has
\[\abs{z(x) - z_r(x)} \leq \abs{z(x) - z_{r/2}(x)} + \abs{z_{r/2}(x) - z_r(x)} \leq \abs{z(x)-z_{r/2}(x)} + C r^\beta,\]
which means by setting $f(r) = \abs{z(x) - z_r(x)}$ for $r>0$ that:
\[f(r) \leq f(r/2) + C r^\beta.\]
Consequently for all $k \in \N$
\[f(r) \leq f(r \cdot 2^{-(k+1)}) + Cr^\beta \sum_{i=0}^k 2^{-i\beta}\]
thus
\[f(r) \leq \limsup_{\eps\to 0} f(\eps) + Cr^\beta \sum_{i=0}^\infty 2^{-i\beta} \leq \limsup_{\eps\to 0} f(\eps) + Cr^\beta.\]
Now let us prove that $f(\eps) \to 0$ when $\eps\to 0$, i.e. $z_\eps(x) \xrightarrow{\eps\to 0} z(x)$. We already know that $z$ is lower semi-continuous hence $z(x) \leq \liminf_{\eps\to 0} z_{\eps}(x)$. Moreover using \eqref{low_dev_mean}, we have
\[\limsup_{\eps\to 0} z_\eps(x) \leq \limsup_{\eps\to 0} (z(x) + C\eps^\beta) = z(x),\]
which implies that $z_\eps(x) \to z(x)$ when $\eps\to 0$. Therefore the inequality $f(r) \leq C r^\beta$ holds, that is to say:
\[ \abs{z(x) - z_r(x)} \leq C r^\beta.\qedhere\]
\end{proof}

\begin{lem}[Large scale difference]\label{lem:large_scale}
For any $x,y \in A$, one has:
\[\abs{z_{\abs{y-x}}(x)-z_{\abs{y-x}}(y)} \leq C \abs{y-x}^\beta.\]
\end{lem}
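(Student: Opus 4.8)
The plan is to set $r = \abs{y-x}$ and estimate $\abs{z_r(x) - z_r(y)}$ by comparing both quantities to the mean of $z$ over a common set, much as one does in the standard Campanato-to-Hölder argument, but carefully tracking the volume-fraction denominators $\Theta_A(\cdot,r)^{1-\alpha}$ and invoking Lemma \ref{lem:dev_max} whenever they threaten to blow up. Since $\abs{x-y}=r$, both $x$ and $y$ lie in $B_{2r}(x)$ (say), and $A_r(x), A_r(y) \subseteq A_{2r}(x)$. First I would compare $z_r(x)$ to $z_{2r}(x)$: iterating Proposition \ref{prop: small_scale} a bounded number of times (here just twice, $r \to 2r$) gives $\abs{z_r(x) - z_{2r}(x)} \leq C r^\beta$, and similarly $\abs{z_r(y) - z_{2r}(y)} \leq C r^\beta$. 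So it suffices to bound $\abs{z_{2r}(x) - z_{2r}(y)}$, i.e. to compare the averages of $z$ over the two balls $A_{2r}(x)$ and $A_{2r}(y)$, both of which sit inside $A_{3r}(x)$.

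Next, by Lemma \ref{lem:mean_dev} applied at $x$ with radius $3r$, the average $z_{3r}(x)$ is within $C (3r)^\beta / \Theta_A(x,3r)^{1-\alpha}$ of $z$ on most of $A_{3r}(x)$ in the $L^1$ sense; more precisely,
\[
\abs{z_{2r}(x) - z_{3r}(x)} \leq \frac{\abs{A_{3r}(x)}}{\abs{A_{2r}(x)}}\fint_{A_{3r}(x)} \abs{z - z_{3r}(x)} \leq C\,\frac{r^\beta}{\Theta_A(x,3r)^{1-\alpha}}\cdot\frac{1}{\Theta_A(x,2r)},
\]
and likewise $\abs{z_{2r}(y) - z_{3r}(x)} \leq C r^\beta \Theta_A(x,3r)^{-(1-\alpha)}\Theta_A(x,2r)^{-1}$, since $A_{2r}(y) \subseteq A_{3r}(x)$ as well. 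Subtracting, $\abs{z_{2r}(x) - z_{2r}(y)} \leq C r^\beta/\Theta_A(x,2r)$ (absorbing the $\Theta_A(x,3r)^{-(1-\alpha)}$ into the same kind of term, using $\Theta_A(x,3r)\geq (2/3)^d\Theta_A(x,2r)$). This is exactly the same shape of estimate as in the previous lemmas, with a bad denominator $\Theta_A(x,2r)$.

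Finally, to remove $\Theta_A(x,2r)$, I would dichotomize as before: if $\Theta_A(x,2r) \geq 1/2$ the estimate is already $\leq C r^\beta$ and we are done. Otherwise $\Theta_{A^c}(x,2r) \geq 1/2$, and Lemma \ref{lem:dev_max} (at $x$, radius $2r$) gives $0 \leq z^\star - z \leq C r^\beta$ on all of $A_{2r}(x)$, hence $0\leq z^\star - z_{2r}(x) \leq Cr^\beta$; the same argument at the point $y$ — noting $\Theta_{A^c}(y,r)$ may a priori be small, so one should instead observe $A_{2r}(y)\subseteq A_{3r}(x)$ and that $z^\star - z \leq C r^\beta$ on $A_{3r}(x)$ by Lemma \ref{lem:dev_max} at $x$ with radius $3r$ (using $\Theta_{A^c}(x,3r) \geq (2/3)^d\Theta_{A^c}(x,2r) \geq c$) — yields $0 \leq z^\star - z_{2r}(y) \leq C r^\beta$. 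Combining, $\abs{z_{2r}(x) - z_{2r}(y)} \leq \abs{z^\star - z_{2r}(x)} + \abs{z^\star - z_{2r}(y)} \leq C r^\beta$, and adding back the two $\abs{z_r - z_{2r}}$ corrections finishes the proof. I expect the only delicate point to be the bookkeeping ensuring that every ball used at $y$ is comparable to a ball centered at $x$ of radius a fixed multiple of $r$, so that a single application of Lemma \ref{lem:dev_max} at $x$ controls the behavior near $y$ as well; everything else is the by-now routine "large $\Theta_A$ vs. large $\Theta_{A^c}$" dichotomy used throughout this section.
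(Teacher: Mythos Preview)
Your overall strategy is sound and is a legitimate variant of the Campanato-to-H\"older argument, but there is a bookkeeping slip that leaves one case of the dichotomy unhandled. When you bound $\abs{z_{2r}(y)-z_{3r}(x)}$, the ratio that appears is $\abs{A_{3r}(x)}/\abs{A_{2r}(y)}$, so the denominator is $\Theta_A(y,2r)$, not $\Theta_A(x,2r)$. Consequently your combined estimate reads $\abs{z_{2r}(x)-z_{2r}(y)} \leq C r^\beta\bigl(\Theta_A(x,2r)^{-1}+\Theta_A(y,2r)^{-1}\bigr)$, and dichotomizing only on $\Theta_A(x,2r)$ misses the case $\Theta_A(x,2r)\geq 1/2$ but $\Theta_A(y,2r)<1/2$. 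The fix is immediate and symmetric: if \emph{either} $\Theta_A(x,2r)$ or $\Theta_A(y,2r)$ is below $1/2$, apply Lemma \ref{lem:dev_max} at that center with radius $3r$ (using $\Theta_{A^c}(\cdot,3r)\geq(2/3)^d\Theta_{A^c}(\cdot,2r)\geq c$) to get $0\leq z^\star - z \leq C r^\beta$ on a ball containing both $A_{2r}(x)$ and $A_{2r}(y)$, hence both means are within $Cr^\beta$ of $z^\star$. With this correction your argument goes through.

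For comparison, the paper's proof is more direct: it stays at radius $r$ and uses the lens $\Delta_r = B_r(x)\cap B_r(y)$, which has volume $c\abs{B_r}$. The dichotomy there is on whether both $\Theta_{A^c}(x,r)$ and $\Theta_{A^c}(y,r)$ exceed $c/2$ (then Lemma \ref{lem:dev_max} puts both means near $z^\star$), or one of them, say $\Theta_{A^c}(x,r)$, is at most $c/2$ (then $\abs{A\cap\Delta_r}\geq (c/2)\abs{B_r}$, and one integrates the clean mean-deviation bound of Lemma \ref{lem:mean_dev} over $A_r(x)\cap A_r(y)$). This avoids the detour through Proposition \ref{prop: small_scale} and the enlarged radii $2r,3r$; your route is closer to the textbook Campanato argument of comparing via a common enclosing ball, which is equally valid but a step longer.
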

\begin{proof}
Set $r = \abs{y-x}$, and $\Delta_r = B_r(x) \cap B_r(y)$.  Notice that, $\Delta_r$ being a fixed fraction of $B_r(x)$ (independant of $r$), $\abs{\Delta_r} = c \abs{B_r}$ for some $c = c(d) \in (0,1)$. 

If both $\Theta_{A^c}(x,r) \ge \frac{c}{2}$ and $\Theta_{A^c(}y,r) \ge \frac{c}{2}$, then by Lemma \ref{lem:dev_max} one has:
\begin{align*}
0\le z^\star - z_r(x) \leq C \frac{r^\beta}{\Theta_{A^c}(x,r)^{1-\alpha}} \leq C r^\beta,\text{ and }
0  \le z^\star - z_r(y) \leq C \frac{r^\beta}{\Theta_{A^c}(y,r)^{1-\alpha}} \leq C r^\beta,
\end{align*}
which implies the desired inequality
\begin{equation}\label{eq: z_rxy}
\abs{z_r(x)-z_r(y)} \leq C r^\beta.
\end{equation}

On the other hand, if either $\Theta_{A^c}(x,r)$ or $\Theta_{A^c}(y,r)$ is less than $c/2$, say $\Theta_{A^c}(x,r) \le \frac{c}{2}$, we claim the desired inequality (\ref{eq: z_rxy}) still holds. Indeed, for all $u\in A_r(x) \cap A_r(y)$ one has
\[\abs{z_r(x)-z_r(y)} \leq \abs{z_r(x) - z(u)} + \abs{z_r(y) - z(u)}\]
thus integrating over $A_r(x) \cap A_r(y)$ in $u$ one gets:
\begin{align*}
\abs{z_r(x)-z_r(y)}  &\leq \frac{1}{\abs{A_r(x)\cap A_r(y)}}\left[\int_{A_r(x)} \abs{z(u)-z_r(x)} \diff u +\int_{\abs{A_r(y)}} \abs{z(u)-z_r(y)} \diff u\right]\\
&\leq C r^\beta \frac{\abs{A_r(x)}+\abs{A_r(y)}}{\abs{A_r(x)\cap A_r(y)}},
\end{align*}
the last inequality resulting from Lemma \ref{lem:mean_dev}. Note that
\[\abs{A_r(x) \cap A_r(y)} = \abs{\Delta_r \cap A} \geq \abs{\Delta_r} - \abs{B_r(x) \setminus A} = c \abs{B_r(x)} - \abs{B_r(x) \setminus A}\]
which implies that
\[\frac{\abs{A_r(x)}+\abs{A_r(y)}}{\abs{A_r(x) \cap A_r(y)}} \leq \frac{2 \abs{B_r(x)}}{c \abs{B_r(x)} - \abs{B_r(x) \setminus A}}
=\frac{2}{c-\Theta_{A^c}(x,r)}\le \frac{4}{c}.\]
Thus, in this case, we still have
\[\abs{z_r(x)-z_r(y)} \leq C r^\beta\frac{\abs{A_r(x)}+\abs{A_r(y)}}{\abs{A_r(x) \cap A_r(y)}}\leq C r^\beta.\]
\end{proof}

\begin{thm}[Hölder continuity]\label{thm: Holder}
The function $z$ is $\beta$-Hölder continuous on $A$. More precisely:
\[\forall x,y \in A, \quad \abs{z(y)-z(x)} \leq C \abs{y-x}^\beta,\]
for some constant $C = C(\alpha,d)$.
\end{thm}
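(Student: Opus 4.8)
The plan is simply to assemble the chain of three inequalities announced at the beginning of Section~\ref{sec:hölder}. Fix $x,y\in A$ and set $r=\abs{y-x}$. Inserting the two local means $z_r(x)$ and $z_r(y)$ via the triangle inequality gives
\[\abs{z(y)-z(x)} \leq \abs{z(y)-z_r(y)} + \abs{z_r(y)-z_r(x)} + \abs{z_r(x)-z(x)}.\]
The first and third terms are each at most $C r^\beta$ by Lemma~\ref{lem:dev_mean} (deviation to the mean), applied at the point $y$ and at the point $x$ with radius $r$; the middle term is at most $C r^\beta$ by Lemma~\ref{lem:large_scale} (large-scale difference), since there $r=\abs{y-x}$ is precisely the radius at which the two balls are compared. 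Adding the three bounds and recalling $r=\abs{y-x}$ yields $\abs{z(y)-z(x)}\leq C\abs{y-x}^\beta$ with $C=C(\alpha,d)$, which is exactly the claim.

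At this stage there is essentially no obstacle left: the only point to verify is that the constants produced by Lemmas~\ref{lem:dev_mean} and~\ref{lem:large_scale} are genuinely uniform, i.e. independent of $x$, $y$ and $r$ — which is the case, as they depend only on $\alpha$ and $d$. The real difficulty of the section, already dealt with in Lemmas~\ref{lem:dev_max}--\ref{lem:large_scale}, was the recurring factor $\Theta_A(x,r)^{1-\alpha}$ (or $\Theta_{A^c}(x,r)^{1-\alpha}$) spoiling the naive Campanato-type estimates, and the need to absorb it when the density of $A$ near $x$ degenerates; this is handled by using the optimality of $A$ through the maximum deviation lemma, which in the regime $\Theta_{A^c}(x,r)\geq 1/2$ forces every value of $z$ on $A_r(x)$ to lie within $Cr^\beta$ of the single constant $z^\star$. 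Once those lemmas are in hand, the Hölder estimate follows by the one-line computation above, and the $\beta$-Hölder continuity of $z$ on $A$ is established.
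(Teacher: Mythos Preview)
Your proof is correct and is essentially identical to the paper's own argument: the paper also writes the single triangle-inequality line
\[\abs{z(y)-z(x)} \leq \abs{z(y) - z_{\abs{y-x}}(y)} + \abs{z_{\abs{y-x}}(y)-z_{\abs{y-x}}(x)} + \abs{z(x)-z_{\abs{y-x}}(x)} \leq 3 C \abs{y-x}^\beta,\]
invoking Lemmas~\ref{lem:dev_mean} and~\ref{lem:large_scale} for the three terms. Your added remarks about uniformity of the constants and the role of the $\Theta$-factors are accurate commentary but not needed for the proof itself.
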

\begin{proof}
By Lemma \ref{lem:dev_mean} and Lemma \ref{lem:large_scale},
\[\abs{z(y)-z(x)} \leq \abs{z(y) - z_{\abs{y-x}}(y)} + \abs{z_{\abs{y-x}}(y)-z_{\abs{y-x}}(x)} + \abs{z(y)-z_{\abs{y-x}}(y)} \leq 3 C \abs{y-x}^\beta.\qedhere\]
\end{proof}

As a consequence of this result we may quantify the minimal size of a ball one can put inside $A$ around $x$ in terms of $z^\star - z(x)$ and prove that $A$ has non-empty interior.

\begin{prop}[Interior points]\label{prop:interior_points}
For some constant $C = C(\alpha,d)$ the following holds:
\begin{equation}\label{eq:ball_inclusion}
\forall x\in A, \qquad B_{r(x)}(x)\subseteq A,
\end{equation}
where $r(x) \coloneqq C(z^\star-z(x))^{1/\beta}\ge 0$. 
In particular \[\{x\in A: z(x) < z^\star\} \subseteq \overset{\circ}{A} \text{ and }\partial A \subseteq \{x\in A: z(x) = z^\star\}.\]
\end{prop}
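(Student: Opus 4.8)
The plan is to use the $\beta$-Hölder continuity of $z$ established in Theorem \ref{thm: Holder} together with the maximum deviation estimate of Lemma \ref{lem:dev_max} in its contrapositive form. The key mechanism is: if $z(x)$ is strictly below $z^\star$, then by Hölder continuity $z$ stays below $z^\star$ on a whole ball around $x$, so that ball lies in $A$; conversely, if $z(x)=z^\star$ then there cannot be such a ball, so $x$ is on the boundary.

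First I would fix $x \in A$ and suppose $z(x) < z^\star$. For any $y$ with $\abs{y-x} = \rho$, Theorem \ref{thm: Holder} gives $z(y) \leq z(x) + C\abs{y-x}^\beta = z(x) + C\rho^\beta$, provided $y \in A$ (the Hölder estimate is stated on $A$). So I need to be a little careful: the statement $B_{r(x)}(x) \subseteq A$ should be proved by a connectedness/continuation argument, or by observing that $z$ is globally defined and lower semicontinuous on $\R^d$ with $z(y) = +\infty$ for $y \notin A$ (since outside $A = \{z \leq z^\star\}$ we have $z > z^\star$), so that if $B_\rho(x)$ contained a point outside $A$ it would contain, by considering a curve in $B_\rho(x)$ from $x$ to that point, a point $y$ where $z(y) = z^\star$ exactly — but then $\abs{y-x} \leq \rho$ and $z^\star - z(x) = z(y) - z(x) \leq C\rho^\beta$. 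Choosing $\rho^\beta < (z^\star - z(x))/C$, i.e. $\rho < C^{-1/\beta}(z^\star-z(x))^{1/\beta}$, yields a contradiction; hence $B_\rho(x) \subseteq A$ for all such $\rho$, giving \eqref{eq:ball_inclusion} with $r(x) = C(z^\star - z(x))^{1/\beta}$ after renaming the constant. Alternatively and more cleanly, one can combine Lemma \ref{lem:dev_max} with Lemma \ref{lem:dev_mean}: applying the maximum deviation estimate and the fact that $z(x) \leq z_r(x) + Cr^\beta$, a ball $B_r(x)$ meeting $A^c$ would force $\Theta_{A^c}(x,r) > 0$ and then, running the argument that produced Lemma \ref{lem:dev_max}, one gets $z^\star - z(x) \leq Cr^\beta$; contrapositively, $r < C(z^\star - z(x))^{1/\beta}$ forces $B_r(x) \subseteq A$.

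The two displayed inclusions then follow immediately: if $x \in A$ with $z(x) < z^\star$ then $r(x) > 0$, so $B_{r(x)}(x) \subseteq A$ shows $x \in \overset{\circ}{A}$, giving $\{x \in A : z(x) < z^\star\} \subseteq \overset{\circ}{A}$. Taking complements within $A$, since $A = \{z \leq z^\star\}$ we have $A \setminus \overset{\circ}{A} = \partial A \subseteq \{x \in A : z(x) = z^\star\}$.

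I expect the only real subtlety — not a deep obstacle, but the point requiring care — to be the passage from the Hölder estimate \emph{on $A$} to a genuine ball inclusion in $\R^d$: one must rule out the ball around $x$ poking out of $A$, and the clean way is to exploit $A = \{z \leq z^\star\}$ so that any point of $\partial A \cap B_\rho(x)$ is a point where $z$ equals exactly $z^\star$ and apply the Hölder bound there (using that $z$ is finite and lsc, so such a boundary point of $A$ inside the ball, if it existed, lies in $A$ and the estimate applies). Everything else is a direct substitution into the estimates already proved.
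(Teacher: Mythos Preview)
Your overall strategy is right and matches the paper's: find a point $y\in A$ near $x_0$ with $z(y)=z^\star$, then apply the H\"older estimate between $x_0$ and $y$. The gap is precisely the step you flag as the ``only real subtlety'': you do not actually justify that such a $y$ exists. Neither of your proposed justifications works. Lower semicontinuity of $z$ together with $A=\{z\le z^\star\}$ only gives $z(y)\le z^\star$ for $y\in\partial A$, not equality; lsc provides a \emph{lower} bound on $z(y)$ from approaching sequences, but applying it to a sequence $y_n\to y$ with $y_n\in A^c$ yields $z(y)\le\liminf z(y_n)$, which is the wrong direction. Your alternative via Lemma~\ref{lem:dev_max} also fails as stated: that lemma gives $z^\star-z(x)\le C r^\beta/\Theta_{A^c}(x,r)^{1-\alpha}$, so the mere positivity of $\Theta_{A^c}(x,r)$ is useless --- you need a uniform \emph{lower} bound on $\Theta_{A^c}$ to extract $z^\star-z(x)\le Cr^\beta$.

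The paper supplies exactly this missing lower bound by a geometric trick. Given any $x\in A^c$, take $y\in A$ a closest point to $x$ (possible since $A$ is compact). Then the open ball $B_{\abs{x-y}}(x)$ lies in $A^c$, and as $r\to 0$ its intersection with $B_r(y)$ is asymptotically a half-ball, so $\liminf_{r\to 0}\Theta_{A^c}(y,r)\ge 1/2$. Now Lemma~\ref{lem:dev_max} and $r\to 0$ force $z(y)=z^\star$. The H\"older bound then gives $z^\star-z(x_0)\le C\abs{y-x_0}^\beta\le C(2\abs{x-x_0})^\beta$ (using $\abs{y-x}\le\abs{x_0-x}$ since $x_0\in A$), and contraposing yields the ball inclusion. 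So the missing idea is not an obstacle, but it is the actual content of the proof rather than a routine check.
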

\begin{proof}
It suffices to prove \eqref{eq:ball_inclusion} for $x_0\in A$ satisfying $z(x_0) < z^\star$. Consider a point $x \in A^c$. Take a point $y \in A$ which is closest to $x$ : it is possible since $A$ is compact. By Lemma \ref{lem:dev_max}, we know that for small $r$
\[\Theta_{A^c}(y,r)^{1-\alpha} (z^\star - z(y))\leq C r^\beta.\]
But by construction $y$ is such that $\liminf_{r\to 0} \Theta_{A^c}(y,r) \geq 1/2$, and since the right-hand side tends to $0$, necessarily $z(y) = z^\star$. By the Hölder continuity of $z$ stated in Theorem \ref{thm: Holder},
\begin{align*}
z^\star - z(x_0) = \abs{z(y) - z(x_0)} \leq C \abs{y-x_0}^\beta \leq C \abs{x-x_0}^\beta,
\end{align*}
where the last inequality follows from the fact that $\abs{y-x_0} \leq \abs{y-x} + \abs{x-x_0} \leq 2 \abs{x-x_0}$ because $y$ minimizes the distance from $x$. Hence, for all $x\in A^c$, $\abs{x-x_0} \ge C(z^\star-z(x_0))^{1/\beta}=r(x_0)$, which implies the desired result.
\end{proof}

\section{On the dimension of the boundary}\label{sec:4}

We are interested in the dimension of the boundary $\partial A$, our guess being that it should be non-integer, and lie between $d-1$ and $d$. Here we look at the Minkowski dimension (also called box-counting dimension). Given a set $X$, we denote by $N_\eps(X)$ the maximum amount of disjoint balls of radius $\eps$ centered at points of $X$.

\begin{defn}[Minkowski dimension]
We define the upper Minkowski dimension of $X$ by
\[\overline{\dim}_M(X) = \limsup_{\eps \to 0} \frac{\log(N_\eps(X))}{-\log \eps},\]
and the lower Minkowski dimension by
\[\underline{\dim}_M(X) = \liminf_{\eps \to 0} \frac{\log(N_\eps(X))}{-\log \eps}.\]
When these coincide we just call it the Minkowski dimension and denote it by $\dim_M(X)$.
\end{defn}

We shall get an upper bound on the upper Minkowski dimension. We say that $X$ is of dimension smaller than $\delta$ if $\overline{\dim}_M X \leq \delta$.

\begin{lem}\label{lem:volume_bound}
There is a constant $C=C(\alpha,d)$ such that for all $k \leq z^\star$,
\[\abs{\{x\in A: k< z(x) \leq z^\star\}} \leq C (z^\star -k).\]
\end{lem}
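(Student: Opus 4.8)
The plan is to test the optimality of $\nu$ against the sublevel competitor $\tilde{\nu}=\indic_{\{z\le k\}}$, recover the inequality \eqref{eq: step1eqn} from the proof of Theorem \ref{thm:exist}, and then extract a \emph{linear} bound on the volume by retaining the second-order term in the strict-convexity estimate (the first-order term alone only gives $k\le z^\star$).

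First I would fix $k\le z^\star$. If $k=z^\star$ the set $\{x\in A:z^\star<z(x)\le z^\star\}$ is empty, hence Lebesgue-null, and there is nothing to prove; so assume $k<z^\star$. Recall from Theorem \ref{thm:exist} that $\nu=\indic_A$ with $A=\{z\le z^\star\}$, so $\{z\le k\}\subseteq A$ and, by the very definition of $z^\star$, $\abs*{\{z\le k\}}<1$. Consequently the quantity to be estimated,
\[
m\coloneqq \abs*{\{x\in A:k<z(x)\le z^\star\}}=1-\abs*{\{z\le k\}},
\]
lies in $(0,1]$. Taking $\tilde{\nu}=\indic_{\{z\le k\}}$ (a positive measure of mass $1-m$), combining the scaling lemma (Lemma \ref{lem:scaling}) with the first variation inequality (Proposition \ref{prop:first_var}), observing $\tilde{\nu}-\nu=-\indic_{\{k<z\le z^\star\}}$, and using $z>k$ on $\{k<z\le z^\star\}$, I obtain exactly the inequality from Step~1 of the proof of Theorem \ref{thm:exist}:
\[
e_\alpha(1-m)^{\alpha+\frac1d}\le \mathbf{X}_\alpha(\tilde{\nu})\le e_\alpha-\alpha\int_{\{k<z\le z^\star\}} z\le e_\alpha-\alpha k m.
\]

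The crux is then a purely elementary point. The crude bound $(1-m)^{\alpha+\frac1d}\ge 1-(\alpha+\tfrac1d)m$ returns only $k\le z^\star$; one must keep the quadratic term. Writing $p\coloneqq\alpha+\frac1d$, so that $1<p<1+\frac1d\le 2$ and $p-1=\beta/d$, I would prove that
\[
(1-m)^{p}\ge 1-pm+\tfrac{p-1}{2}m^2\qquad\text{for all }m\in[0,1],
\]
for instance by checking that $g(m)\coloneqq(1-m)^p-1+pm-\tfrac{p-1}{2}m^2$ satisfies $g(0)=g'(0)=0$ and $g''(m)=(p-1)\bigl(p(1-m)^{p-2}-1\bigr)\ge 0$ on $[0,1)$, the last sign being forced by $p\le 2$, which makes $(1-m)^{p-2}\ge 1$ (alternatively, $m\mapsto\bigl((1-m)^p-1+pm\bigr)/m^2$ is continuous and positive on $(0,1]$ with positive limit $\tfrac{p(p-1)}{2}$ at $0$, hence bounded below by a positive constant depending only on $p$). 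Substituting into the previous display, cancelling $e_\alpha$, dividing by $m>0$, and using $e_\alpha\bigl(\alpha+\tfrac1d\bigr)=\alpha z^\star$ from \eqref{eq: z*_value}, I get
\[
\tfrac{e_\alpha(p-1)}{2}\,m\le e_\alpha p-\alpha k=\alpha(z^\star-k),
\]
that is $m\le \dfrac{2\alpha d}{e_\alpha\beta}(z^\star-k)$, which is the claim with $C=\dfrac{2\alpha d}{e_\alpha\beta}=C(\alpha,d)$.

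I do not expect a genuine obstacle: the argument is a sharpened rerun of the one producing \eqref{eq: step1eqn}, and the only place needing a little care is the quantitative convexity inequality above and, in particular, the use of $p<2$, which holds throughout the admissible range $1-\frac1d<\alpha<1$ — precisely the strict-convexity mechanism already exploited in Steps 1, 2 and 4 of the proof of Theorem \ref{thm:exist}.
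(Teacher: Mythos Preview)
Your proof is correct and follows essentially the same approach as the paper: both test against the competitor $\tilde{\nu}=\indic_{\{z\le k\}}$, recover the inequality $e_\alpha(1-m)^{\alpha+1/d}\le e_\alpha-\alpha km$, and then keep the second-order term in the convexity estimate for $(1-m)^p$ to extract the linear bound on $m$. The only cosmetic difference is that you carefully justify the inequality $(1-m)^p\ge 1-pm+\tfrac{p-1}{2}m^2$ via $p<2$, whereas the paper simply states the slightly sharper bound $(1-m)^p\ge 1-pm+\tfrac{p(p-1)}{2}m^2$ as a ``development at order $2$''; this only affects the constant and not the argument.
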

\begin{proof}
Consider the competitor $\tilde{\nu} = \indic_{\{z \leq k\}}$ with total mass $\abs{\tilde{\nu}} = 1 - m$, where $m=|\{x\in A: k<z(x) \le z^*\}|$. As in (\ref{eq: step1eqn}), one has
\[e_\alpha (1-m)^{\alpha + 1/d} \leq e_\alpha - \alpha km \]
hence knowing that $\alpha z^\star = (\alpha + 1/d) e_\alpha$ and developing the term on the left-hand side at order $2$, we obtain:
\[-\alpha m z^\star + \frac{e_{\alpha}}{2}(\alpha + \frac{1}{d})(\alpha+\frac{1}{d}-1)m^2 \leq -\alpha km\]
Thus
\[m \leq C(z^\star - k)\]
with $1/C = e_{\alpha}(\alpha + 1/d)(\alpha+1/d-1)/(2\alpha)$.
\end{proof}

\begin{thm}
The set $\partial A$ is of dimension less than $d-\beta$.
\end{thm}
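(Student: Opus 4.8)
The plan is to cover $\partial A$ by balls centered on the boundary, use the Hölder regularity of $z$ to show that near such a ball the value of $z$ is within $C\eps^\beta$ of $z^\star$, and then bound the number of disjoint such balls by a volume argument via Lemma~\ref{lem:volume_bound}. Concretely, fix $\eps > 0$ and let $B_\eps(x_1),\dots,B_\eps(x_N)$ be a maximal family of disjoint balls with $x_i \in \partial A$ and $N = N_\eps(\partial A)$. By Proposition~\ref{prop:interior_points} every $x_i$ satisfies $z(x_i) = z^\star$. By Theorem~\ref{thm: Holder}, for any $y \in B_\eps(x_i) \cap A$ we have $z(y) \geq z(x_i) - C\eps^\beta = z^\star - C\eps^\beta$, so $B_\eps(x_i) \cap A \subseteq \{x \in A : z^\star - C\eps^\beta < z(x) \leq z^\star\}$ (the strict/non-strict endpoints do not matter).

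Next I would extract a definite amount of volume from each ball. Since $x_i \in \partial A \subseteq A$ and $A$ is the closure-type sublevel set $\{z \leq z^\star\}$, and also $x_i$ is on the boundary, half of the ball (or at least a fixed fraction $c = c(d)$) must lie in $A$: indeed if $x_i \in \partial A$ then arbitrarily close to $x_i$ there are points of $A$, but more robustly one can argue that $\abs{B_\eps(x_i) \cap A} \geq c\,\abs{B_\eps(x_i)}$ fails in general, so instead I would use the cleaner route: $x_i \in A$, so $B_{r(x_i)}(x_i) \subseteq A$ by Proposition~\ref{prop:interior_points}, but $r(x_i) = 0$ since $z(x_i) = z^\star$ — hence this gives nothing. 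The correct fixed-fraction statement is that $\abs{B_\eps(x_i) \setminus A} \geq c\,\abs{B_\eps(x_i)}$, which holds because $x_i \notin \overset{\circ}{A}$. Combining with the paragraph above, the portion $B_\eps(x_i) \cap A$ need not be large, so instead the volume I should count is $\abs{B_\eps(x_i) \setminus A}$. That does not obviously relate to Lemma~\ref{lem:volume_bound}. The cleanest fix is to enlarge: consider the balls $B_{2\eps}(x_i)$, which still contain $B_\eps(x_i)$; by Hölder regularity, every point $y \in B_{2\eps}(x_i)\cap A$ has $z(y) \geq z^\star - C\eps^\beta$; and since the $B_\eps(x_i)$ are disjoint, the dilated balls have bounded overlap (a fixed number $K = K(d)$ depending only on dimension, by a standard Vitali-type packing argument). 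Therefore
\[
\sum_{i=1}^N \abs{B_{2\eps}(x_i) \cap A} \leq K\,\abs{\{x\in A : z^\star - C\eps^\beta < z(x) \leq z^\star\}} \leq K C (z^\star - (z^\star - C\eps^\beta)) = C' \eps^\beta,
\]
using Lemma~\ref{lem:volume_bound} in the last step.

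It remains to bound each term from below. Since $x_i \in \partial A$, there is a point $x_i' \in \overset{\circ}{A}$ with $\abs{x_i' - x_i}$ as small as we like, in particular $\leq \eps$, and then a small ball around $x_i'$ lies in $A \cap B_{2\eps}(x_i)$; but this small ball has radius $\to 0$, giving no uniform lower bound. The genuinely uniform statement comes from a different point: take $x_i'' \in A$ with $z(x_i'') \leq z^\star - C_0(2\eps)^\beta$ for a suitable small constant $C_0$; such a point exists at distance $\leq \eps$ from $x_i$ provided $A$ is not ``too thin'' near its boundary, and then Proposition~\ref{prop:interior_points} gives $B_{C_1\eps}(x_i'') \subseteq A$ with $C_1\eps \leq \eps$, so $B_{C_1\eps}(x_i'') \subseteq A \cap B_{2\eps}(x_i)$ once $C_1 \leq 1/2$, yielding $\abs{B_{2\eps}(x_i) \cap A} \geq c\, \eps^d$. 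Granting this, we get $N \eps^d \leq C' \eps^\beta / c$, i.e. $N_\eps(\partial A) \leq C'' \eps^{\beta - d} = C'' \eps^{-(d-\beta)}$, and hence
\[
\overline{\dim}_M(\partial A) = \limsup_{\eps\to 0}\frac{\log N_\eps(\partial A)}{-\log\eps} \leq \limsup_{\eps\to 0}\frac{\log C'' + (d-\beta)(-\log\eps)}{-\log\eps} = d - \beta.
\]

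The main obstacle, and the step requiring care, is exactly the uniform lower bound $\abs{B_{2\eps}(x_i)\cap A}\gtrsim \eps^d$: one must produce, near each boundary point, a point of $A$ whose $z$-value is below $z^\star$ by at least a constant times $\eps^\beta$, so that the ball-inclusion of Proposition~\ref{prop:interior_points} kicks in at scale $\gtrsim \eps$. A natural way to obtain such a point is to go inward a distance of order $\eps$ along an $\eta$-good curve reaching the boundary point $x_i$, since $z$ is strictly increasing along such curves and in fact drops at a rate controlled from below; alternatively, one combines the packing/volume bound of Lemma~\ref{lem:volume_bound} with a connectedness argument. I would present the argument with the dilation factor $2$ (or any fixed constant) to absorb overlaps, and note that only Theorem~\ref{thm: Holder}, Proposition~\ref{prop:interior_points} and Lemma~\ref{lem:volume_bound} are needed — all already established.
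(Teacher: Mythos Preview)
Your proposal identifies the right overall strategy for one half of the argument (the volume bound via Lemma~\ref{lem:volume_bound} and H\"older regularity), but the uniform lower bound $\abs{B_{2\eps}(x_i)\cap A}\gtrsim \eps^d$ that you flag as ``the main obstacle'' is a genuine gap, and neither of your proposed fixes closes it. First, the claim that $x_i\notin\overset{\circ}{A}$ implies $\abs{B_\eps(x_i)\setminus A}\geq c\abs{B_\eps(x_i)}$ is simply false: a boundary point can have density $1$ in $A$. Second, the $\eta$-good-curve idea does not give the right scale: going back arc length $\ell\leq\eps$ along such a curve, $z$ drops by $\int \theta_\eta^{\alpha-1}\geq \ell$, so you only obtain $z^\star-z(x_i'')\geq \eps$, hence by Proposition~\ref{prop:interior_points} a ball of radius $\sim\eps^{1/\beta}\ll\eps$ inside $A$. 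This yields $N\lesssim \eps^{\beta-d/\beta}$, which is strictly weaker than $\eps^{-(d-\beta)}$. To get the needed drop $\gtrsim\eps^\beta$ over distance $\eps$ you would need $\theta_\eta\lesssim\eps^d$ along that stretch of curve, which is not available.

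The paper circumvents this issue by \emph{not} trying to prove the lower volume bound at all. Instead it splits the balls into $I^-=\{i:\abs{B_i\cap A}\geq\abs{B_i}/2\}$ and $I^+=\{i:\abs{B_i\setminus A}\geq\abs{B_i}/2\}$. On $I^-$ your argument (H\"older regularity plus Lemma~\ref{lem:volume_bound}) works verbatim and gives $N^-\leq C\eps^{-(d-\beta)}$. On $I^+$ the paper uses a completely different mechanism: it builds the competitor $\tilde\nu=\indic_{A\cup\bigcup_{i\in I^+}B_i^+}$, irrigates each $B_i^+$ through its center $x_i$ (where $z(x_i)=z^\star$), and compares with the scaling lower bound $e_\alpha(1+m)^{\alpha+1/d}$ expanded to \emph{second} order in $m=\sum_{i\in I^+}\abs{B_i^+}$. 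The first-order terms cancel because $\alpha z^\star=e_\alpha(\alpha+1/d)$, and the second-order inequality $m^2\leq C\sum_{i\in I^+}\abs{B_i^+}^\alpha\eps\leq CN^+\eps^{1+\alpha d}$ combined with $m\geq CN^+\eps^d$ gives $N^+\leq C\eps^{-(d-\beta)}$. This competitor argument is the missing idea in your proposal.
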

\begin{proof}
For $\eps > 0$ fixed, take disjoint balls $(B_i)_{i \in I}$ of radius $\eps$, where $N \coloneqq \abs{I} = N_\eps(\partial A)$. We set $B_i^+ = B_i \setminus A$, $B_i^- = B_i \cap A$. We split the set of balls into two parts: those which have a larger intersection with $A$ rather than $A^c$, and vice-versa. Namely, we set
\begin{align*}
I^+ &= \{i \in I: \abs{B_i^+} \geq \abs{B_i}/2\},&
N^+ &= \abs{I^+},\\
I^- &= \{i \in I: \abs{B_i^-} \geq \abs{B_i}/2\},&
N^- &= \abs{I^-},
\end{align*}
so that $I = I^+ \cup I^-$ and $N \leq N^+ + N^-$. We are going to bound $N^+$ and $N^-$ by some power of $\eps$.

\paragraph{\textbf{Step 1:} Bound on $N^-$}~

Since $z$ is $\beta$-Hölder continuous on $A$, one has for each $B_i = B_\eps(x_i)$:
\[\forall x \in B_i \cap A,\qquad \abs{z(x) - z^\star} < C \eps^\beta,\]
since the center $x_i$ lies in $\partial A \subseteq \{z = z^\star\}$ according to Proposition \ref{prop:interior_points}. Consequently
\[(\partial A)_\eps \cap A \subseteq \{ z^\star - C \eps^\beta < z \leq z^\star\},\]
thus because of Lemma \ref{lem:volume_bound}:
\[\abs{(\partial A)_\eps \cap A} \leq \abs{\{ z^\star - C \eps^\beta < z \leq z^\star\}} \leq C \eps^\beta.\]
Using the previous inequality and the fact that $\abs{B_i^-} \geq \abs{B_i}/2 \geq C \eps^d$ for $i\in I^-$, one has:
\[C N^- \eps^d \leq \sum_{i \in I^-} \abs{B_i^-} \leq \abs{(\partial A)_\eps \cap A} \leq C \eps^\beta,\]
which implies:
\begin{equation}\label{eq:Nminus}
N^- \leq C \eps^{-(d-\beta)}.
\end{equation}

\paragraph{\textbf{Step 2:} Bound on $N^+$}~

We consider the competitor $\tilde{\nu} = \indic_{\tilde{A}}$ where $\tilde{A} = A \cup \bigcup_{i\in I^+} B_i^+$. It has a mass $\abs{\tilde{\nu}} = 1 + m$ where $m = \sum_{i\in I^+} \abs{B_i^+}$. To irrigate $\tilde{\nu}$, we send an extra mass $\abs{B_i^+}$ to each center $x_i$ along the irrigation plan, which costs $\alpha\abs{B_i^+} z^\star$, then we send this mass towards $B_i^+$, which costs at most $C\abs{B_i^+}^\alpha \eps$. But one should get a cost no less than $e_\alpha(1+m)^{\alpha + 1/d}$ by the scaling lemma. Moreover, with a development of order $2$ one has:
\begin{align*}
(1+m)^{\alpha + 1/d} &\geq 1 + (\alpha + 1/d)m + 1/2 \cdot (\alpha + 1/d)(\alpha + 1/d-1) (1+m)^{\alpha +1/d -2} m^2\\
&\geq 1 + (\alpha + 1/d)m + C  m^2
\end{align*}
because for $\eps$ small, $1+m$ is less than $2$ for example. Consequently one may say:
\[e_\alpha \left(1 + (\alpha + 1/d)m + C m^2 \right) \leq e_\alpha + \alpha m z^\star + \sum_{i\in I^+} C\eps \abs{B_i^+}^\alpha.\]
Recall that $\alpha z^\star = e_\alpha (\alpha +1/d)$, thus after simplifying one gets for some $C > 0$:
\begin{equation}\label{eq:step_Nplus}
m^2 \leq C \sum_{i\in I^+} \abs{B_i^+}^\alpha \eps \leq C N^+ \eps^{1+\alpha d}.
\end{equation}
Notice that for $i \in I^+$, $\abs{B_i^+} \geq \abs{B_i}/2 \geq C \eps^d$, so that
\[m = \sum_{i\in I^+} \abs{B_i^+} \geq C N^+ \eps^d.\]
Injecting this into \eqref{eq:step_Nplus}, one gets:
\[(N^+\eps^d)^2 \leq C N^+ \eps^{1+\alpha d},\]
thus
\begin{equation}\label{eq:Nplus}
N^+ \leq C \eps^{1+\alpha d-2d} = C \eps^{-(d-\beta)}.
\end{equation}
Putting \eqref{eq:Nminus} and \eqref{eq:Nplus} together yields:
\[N_\eps(\partial A) = N \leq N^+ + N^- \leq \frac{C}{\eps^{d-\beta}},\]
and
\[\overline{\dim}_M(\partial A) = \limsup_{\eps \to 0} \frac{\log(N_\eps(\partial A))}{-\log(\eps)} \leq d-\beta,\]
which means that $\partial A$ is of dimension smaller than $d-\beta$.
\end{proof}
This result pushes us to propose the following conjecture:

\begin{conj}
The boundary $\partial A$ is of dimension $d-\beta$, in the sense that:
\[\dim_H(\partial A) = \dim_M(\partial A) = d-\beta.\]
\end{conj}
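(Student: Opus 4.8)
Since $\dim_H(X)\le\underline{\dim}_M(X)\le\overline{\dim}_M(X)$ for every set $X$, the preceding theorem already gives $\dim_H(\partial A)\le\underline{\dim}_M(\partial A)\le\overline{\dim}_M(\partial A)\le d-\beta$, so the whole conjecture reduces to the lower bound, and it even suffices to prove $\dim_H(\partial A)\ge d-\beta$, which combined with the theorem forces $\dim_H(\partial A)=\underline{\dim}_M(\partial A)=\overline{\dim}_M(\partial A)=d-\beta$, hence $\dim_M(\partial A)=d-\beta$. Comparing a maximal family of disjoint $\eps$-balls centred on $\partial A$ (whose $3\eps$-enlargement covers $(\partial A)_\eps$ by maximality) with the fact that the balls themselves lie in $(\partial A)_\eps$, one gets $N_\eps(\partial A)\asymp\eps^{-d}\abs{(\partial A)_\eps}$, so the Minkowski lower bound $\underline{\dim}_M(\partial A)\ge d-\beta$ is equivalent to the volumetric estimate $\abs{(\partial A)_\eps}\ge c\,\eps^\beta$ for small $\eps$; for the Hausdorff statement one would instead construct a probability measure $\mu$ on $\partial A$ with $\mu(B_r(x))\le Cr^{d-\beta}$ and invoke the mass distribution principle.

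The plan is to deduce $\abs{(\partial A)_\eps}\ge c\,\eps^\beta$ from two reverse estimates mirroring the one-sided bounds already obtained. The first is a reverse volume bound for the boundary layer of the landscape function,
\[
\abs{\{x\in A:\ z^\star-t<z(x)\le z^\star\}}\ge c\,t,\qquad 0<t\le t_0,
\]
the counterpart of Lemma \ref{lem:volume_bound}. The natural attempt is to run the competitor argument of that lemma \enquote{from the other side}: remove the layer (a mass $m$), re-inject a mass $m$ in a half-ball $B_\rho(x_0)\setminus A$ about a boundary point $x_0\in\partial A$, estimate the extra branched-transport cost by $Cm^\alpha\rho$ via Proposition \ref{prp:bt_law}, choose $\rho$ so that $m=\abs{B_\rho(x_0)\setminus A}$, and combine the first variation inequality of Proposition \ref{prop:first_var} with the lower bound $\mathbf{X}_\alpha\ge e_\alpha$ of Lemma \ref{lem:scaling}; a sufficiently thin layer should then contradict the optimality of $\nu$. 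The second, deeper, estimate is a non-degeneracy of $A$ at the boundary at every scale: that the inner collar $\{x\in A:\ \mathrm{dist}(x,A^c)<\eps\}$ carries $z$-values filling an interval of length $\gtrsim\eps^\beta$ below $z^\star$, i.e. that the $\beta$-Hölder bound of Theorem \ref{thm: Holder} is essentially attained near $\partial A$ and $\mathrm{dist}(x,A^c)\le C(z^\star-z(x))^{1/\beta}$ on most of the layer — Proposition \ref{prop:interior_points} only gives the reverse inequality $\mathrm{dist}(x,A^c)\ge C(z^\star-z(x))^{1/\beta}$. Granting both, one obtains $\{z^\star-c\eps^\beta<z\le z^\star\}\subseteq(\partial A)_{C\eps}$ up to a set of small relative measure, hence $\abs{(\partial A)_{C\eps}}\ge\abs{\{z^\star-c\eps^\beta<z\le z^\star\}}\ge c'\eps^\beta$. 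For the Hausdorff bound and the coincidence $\dim_H=\dim_M$ one would then upgrade this to an Ahlfors-type two-sided regularity of $\partial A$ for boxes of the scale dictated by $\beta$, or, more constructively, build the Frostman measure $\mu$ by distributing mass along the hierarchy of subbranches of the optimal irrigation network, whose proliferation at rate $\ell/\eps$ at every scale is the Brancolini--Solimini phenomenon recalled in the introduction.

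The main obstacle is that \emph{every} estimate available to us (Lemmas \ref{lem:dev_max}, \ref{lem:mean_dev}, \ref{lem:volume_bound} and Proposition \ref{prop:interior_points}) is one-sided: it bounds the oscillation of $z$ and the thickness of the boundary layer from above, and nothing prevents $z$ from being locally much smoother than $\beta$-Hölder, nor $\partial A$ from being locally flat. Ruling this out — in particular establishing that the optimal network must genuinely shed mass at rate $r^\beta$ near $\partial A$ at all scales — requires quantitative control on the self-similar fine geometry of optimal irrigation patterns that current branched-transport techniques do not provide. This is precisely why we leave the statement as a conjecture, supported by the upper bound proved above and by the numerical evidence of the next section.
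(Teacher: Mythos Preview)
The statement is a \emph{conjecture}, and the paper offers no proof: immediately after stating it, the authors write that proving it requires the lower bound $\dim_H(\partial A)\ge d-\beta$, \enquote{for which we do not have a working strategy yet}. Your proposal is therefore not to be compared against a proof but against this admission, and in that respect it is fully consistent with the paper: you correctly reduce the conjecture to the Hausdorff lower bound, sketch a plausible two-step strategy (reverse volume bound for the landscape layer, plus sharpness of the H\"older exponent near $\partial A$), and then honestly identify the obstruction --- all available estimates are one-sided --- before leaving the statement open. Your discussion is in fact more detailed than the paper's single sentence, and the heuristics you give (the competitor argument run in reverse, the Frostman measure built from the Brancolini--Solimini branching hierarchy) are reasonable directions, but as you yourself conclude, none of this constitutes a proof.
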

Proving this requires to establish the inequality $\dim_H(\partial A) \geq d-\beta$, for which we do not have a working strategy yet.

\section{Numerical simulations}

Our goal now is to compute solutions to our shape optimization problem numerically. To perform numerical simulations, we use the Eulerian framework of branched transport, first defined by the third author in \cite{Xia03}. This framework is based on vector measures with a measure divergence, i.e. measures $v \in \mathcal{M}^d(\R^d)$ such that $\nabla \cdot v \in \mathcal{M}(\R^d)$, the set of such measures being denoted by $\mathcal{M}_{div}(\R^d)$. The cost is the so-called $\alpha$-mass:
\[M_\alpha(v) = \begin{dcases*}
\int  \abs*{\frac{dv}{\diff\!\hdm^1}(x)}^\alpha \diff\hdm^1(x)& if $v$ is $1$-rectifiable,\\
+\infty& otherwise.
\end{dcases*}\]
An elliptic approximation of this functional was introduced by Oudet and the second author in \cite{OudSan} (see also \cite{SanCRAS}), in the spirit of Modica and Mortola \cite{ModMor}. The approximate functional is defined for $\eps > 0$ by:
\[M^\alpha_\eps(v) = \eps^{-\sigma_1} \int \abs{v(x)}^\sigma \diff x + \eps^{\sigma_2} \int \frac{\abs{v(x)}^2}{2} \diff x\]
for suitably chosen $\sigma,\sigma_1,\sigma_2$. It is proven in \cite{OudSan} that $M_\eps^\alpha$ $\Gamma$-converges to $M^\alpha$ as $\eps$ goes to $0$, for a suitable topology on $\mathcal{M}_{div}(\R^d)$. Moreover, the $\Gamma$-convergence result also holds imposing an equality constraint on the divergence $\nabla \cdot v = f_\eps$, for a suitable sequence $f_\eps\deb f$, as proven in \cite{Mon17}. The results of \cite{OudSan} are proven in dimension $d=2$, but in \cite{Mon15} there is a proof of how to extend to higher dimension, in the case $\alpha>1-1/d$ (in dimension $d=2$ there is also a version of the $\Gamma$-convergence result for $\alpha \leq 1/2$). Also note that, recently, other phase-field approximations for branched transport or other network problems have been studied, see for instance \cite{BonOrlOud,ChaFerMer,BonLemSan}.

Here we adapt the approach of \cite{OudSan} to our shape optimization problem by adding this time an inequality constraint on the divergence.

Recall that the Lagrangian and Eulerian frameworks are equivalent \cite{Peg}, so that the irrigation distance may be computed in the following way:
\[d_\alpha(\mu,\nu) = \inf_v \quad \{ M^\alpha(v) \quad : \quad \nabla \cdot v = \mu - \nu\}.\]
Consequently the shape optimization problem \eqref{pb:R} rewrites, in relaxed form, as:
\begin{equation}\tag{ES}\label{pb:ES}
\min_v \quad \{ M^\alpha(v) \; :\; \mu -1 \leq \nabla \cdot v \leq \mu \} \quad \text{where $\mu=\delta_0$}.
\end{equation}
Setting $a = \mu-1, b = \mu$ and some mollified versions $a_\eps = \mu_\eps-1, b_\eps = \mu_\eps$,for example a convolution of $\mu$ with the standard mollifier of suitable size $r_\eps$ (e.g. $\eps^{\sigma_2} r_\eps^{-d} = o(1)$ as in \cite{Mon17}), we define the following approximate problem, for $\eps > 0$:
\begin{equation}\label{pb:AS}\tag{AS}
\min_v \quad \{ M^\alpha_\eps(v) \; :\; a_\eps \leq \nabla \cdot v \leq b_\eps \}.
\end{equation}

Let us remark that the above-mentioned $\Gamma$-convergence results do not allow us to say that this problem approximates \eqref{pb:ES}, as the inequality constraint on the divergence is not directly in these works. We leave this question for further investigation, as our aim is for now to make a first attempt to compute numerically an optimal shape for the original problem \eqref{pb:S}.

\subsection{Optimization methods}

We tackle problem \eqref{pb:AS} by descent methods. Two difficulties arise: first of all, the functional $M_\eps^\alpha$ is not convex hence there is no garantee that the methods converge, and if they do, they may converge to a local minimizer which is not necessarily a global minimizer ; secondly, this is a constrained problem, hence we will need to compute projections or resort to proximal methods to handle the constraint. The simplest approach is to use a first-order method, for instance to perform a projected gradient descent on the functional $M^\alpha_\eps$ for $\eps$ fixed (but small):
\begin{pjgm}
\[\left|\begin{aligned}
v_0 &\in C\\
v_{n+1} &= p_C(v_n - \tau_n \nabla M^\alpha_\eps(v_n)),
\end{aligned}\right.\]
where
\[C = \{ v : a_\eps \leq \nabla \cdot v  \leq b_\eps\}\]
is the convex set of admissible vector fields for \eqref{pb:AS}.
\end{pjgm}

Computing the projection $p_C$ is not an easy task, even more so as we want fast computations since this projection should be done at each step of the algorithm. Actually, this projection step will be quite costly (at least in our approach), hence we need to pass to a higher order method to get to an approximate minimizer in a reasonable number of iterations.

Recall that the projected gradient method is a particular case of the proximal gradient method, which we describe briefly. Consider a problem of the form
\[\min_v f(v) + g(v)\]
where $f$ is smooth and $g$ \enquote{proximable}, in the sense that one may easily compute its proximal operator
\[\prox^\tau_g(v) = \argmin_{v'} g(v') + \frac{1}{2\tau} \abs{v'-v}^2.\]
The proximal gradient method consists in doing at each step an explicit descent for $f$ and an implicit descent for $g$:
\begin{pxgm}
\[\left|\begin{aligned}
v_0 &\text{ given}\\
v_{n+1} &= \prox_g^{\tau_n}(v_n - \tau_n \nabla f(v_n)).
\end{aligned}\right.\]
\end{pxgm}
The projected gradient method corresponds to the case
\[g(v) = \begin{dcases*}
0&
if $v \in C$,\\
+\infty&
otherwise.
\end{dcases*}\]
If there was no function $g$, we recover the classical gradient descent method. Notice that there is an implicit choice in this method, since we compute gradients which depend on the scalar product. There is no reason that the canonical scalar product is well adapted to the function we want to minimize. Following the work of Lee, Sun and Saunders \cite{LeeSunSau} on Newton-type proximal methods, one may \enquote{twist} the scalar product, leading to the more general method: 
\begin{tpgm}
\begin{equation}\label{met:gen_prox}
\left|\begin{aligned}
v_0 &\text{ given}\\
v_{n+1} &= \prox_g^{\tau_n,H_n}(v_n - \tau_n \nabla_{H_n} f(v_n)),
\end{aligned}\right.
\end{equation}
where $\nabla_H f(x)$ is the gradient of $f$ with respect to the scalar product $\langle x,y\rangle_H = \langle H x, y \rangle$ for $H$ an invertible self-adjoint operator, and
\[\prox^{\tau,H}_g(v) = \argmin_{v'} g(v') + \frac{1}{2\tau} \norm{v'-v}_H^2.\]
\end{tpgm}
The best quadratic model of $f$  around a point $x_0$ is
\begin{align*}
Qf_{x_0}(x) &= f(x_0) + \langle \nabla_H f(x_0) , x \rangle_H + 1/2 \langle x,x \rangle_H,
\end{align*}
with $H = H_f(x_0)$ being the Hessian of $f$ at $x$, thus it is natural to consider \eqref{met:gen_prox} with $H_n = H_f(x_n)$. Notice indeed that if $g$ is zero, the proximal operator is the identity and that $\nabla_H f(v) = H^{-1} \nabla f$, so that one recovers Newton's method:
\[v_{n+1} = v_n - \tau_n H_n^{-1} \nabla f(v_n),\]
which is known to converge quadratically for smooth enough $f$. This is why this method is called \emph{proximal Newton method}. However, for large-scale problems, computing and storing the Hessian is very costly, thus an alternative is to set $H_n$ to be an approximation of the Hessian of $f$ at $v_n$, thus leading to \emph{proximal quasi-Newton methods}. These methods were introduced in \cite{LeeSunSau}, which we refer to for further detail and theoretical results of convergence.

A very popular choice for $H_n$ is given by the L-BFGS method (see \cite{LiuNoc}), which is a quasi-Newton method building in some sense the \enquote{best} approximation of the Hessian at $v_n$ using only the information of the points $v_k$ and the gradients $\nabla f(v_k)$ for a fixed number of previous steps $k = n, n-1, \ldots, n-L+1$. The interest is that no matrix is stored, and there is a very efficient way to compute the matrix-vector product $H_n^{-1} \cdot v$ using simple algebra. Therefore, we decided to implement a proximal L-BFGS method, which in our case reads:
\begin{plbfgs}
\begin{equation}
\left|\begin{aligned}
v_0 &\text{ given}\\
v_{n+1} &= p_C^{\tilde{H}_n}(v_n - \tau_n \tilde{H}_n^{-1}\nabla f(v_n)),
\end{aligned}\right.
\end{equation}
where $\tilde{H}_n$ is the approximate Hessian computed with the L-BFGS method with $L$ steps and $p_C^{\tilde{H}_n}$ is the projection on $C$ with respect to the norm $\norm{\cdot}_{\tilde{H}_n}$.
\end{plbfgs}
The algorithm to compute the matrix-vector product $\tilde{H}_n^{-1} \cdot x$ is given in Section \ref{sec:algo}.

\subsection{Computing the projection}\label{sec:proj}

The difficulty lies in the computation of the projection, that is on the proximal operator. A box constraint on the variable is very easy to deal with, but here we are faced with box constraints on $\nabla \cdot v$, that is on a linear operator applied to $v$. Moreover, we want to compute a projection with respect to a twisted scalar product $\langle \cdot, \cdot \rangle_H$, which adds some extra difficulty. For simplicity of notations, we rename $a_\eps, b_\eps$ as $a,b$. By definition, finding the projection $p_C^H(v_0)$ of $v_0$ amounts to solving the optimization problem:
\begin{equation}\label{pb:P}\tag{P}
\min \quad \left\{\frac{\norm{v-v_0}_H^2}{2} \: : \: a \leq \nabla \cdot v \leq b,\, v // \partial \Omega\right\}.
\end{equation}
Note that, when one considers the divergence operator as an operator acting on vector fields defined on the whole $\R^d$ (extended to $0$ outside $\Omega$), the Neumann boundary condition above exactly corresponds to the fact that the divergence has no mass on $\partial\Omega$, which can be considered as included in the inequality constraints.

As a convex optimization, such a problem admits a dual problem, which we are going to use. We set
\[\psi(w) = \begin{dcases*}
0& if $a \leq w \leq b$,\\
+\infty& if not,
\end{dcases*}.\]
whose Legendre transform is
\[g(u) = \psi^\star(u) = \int bu_+ - \int a u_-,\]
so that $\psi = \psi^{\star\star} = g^\star$. Let us derive formally the dual problem by an $\inf - \sup$ exchange:
\begin{align*}
\inf_{v // \partial \Omega} \quad \left\{\frac{\norm{v-v_0}_H^2}{2} \: : \: a \leq \nabla \cdot v \leq b\right\}
&= \inf_v \frac 12 \norm{v-v_0}^2_H + \psi(\nabla \cdot v)\\
&= \inf_v \frac 12 \norm{v-v_0}^2_H + \sup_u -\langle \nabla u, v\rangle - g(u)\\
&= \inf_v \sup_u \frac 12 \norm{v-v_0}^2_H -\langle \nabla_H u, v\rangle_H - g(u)\\
&\geq \sup_u - g(u) + \inf_v \frac 12 \norm{v-v_0}^2_H -\langle \nabla_H u, v\rangle_H \\
&= - \inf_u g(u) + \sup_v \langle \nabla_H u, v\rangle_H - \frac 12 \norm{v-v_0}^2_H  \\
&= - \inf_u g(u) + \frac{\norm{\nabla_H u}_H^2}{2} + \langle \nabla_H u , v_0 \rangle_H\\
&= - \inf_u g(u) + \frac 12 \int H^{-1} \nabla u \cdot \nabla u - \int u  (\nabla \cdot v_0).
\end{align*}
Hence the dual problem reads:
\begin{equation}\label{pb:D}\tag{D}
\min_u \underbrace{\frac 12 \int H^{-1} \nabla u \cdot \nabla u - \int u (\nabla \cdot v_0)}_{f(u)} + \underbrace{\int bu_+ - \int a u_-}_{g(u)} .
\end{equation}
The $\inf - \sup$ interversion can be justified with equality via Fenchel's duality \cite[Chapter 1]{Bre} in a well-chosen Banach space. Hence there is no duality gap:
\[ \min \eqref{pb:P} + \min \eqref{pb:D} = 0.\]
As a consequence solving the dual problem provides a solution to the primal one. Indeed if $u$ is optimal for \eqref{pb:D} then $v = v_0 + \nabla_H u$ is optimal for \eqref{pb:P}. Now let us justify why it was interesting to pass by the resolution of a dual problem. Such a problem is of the form
\begin{equation}\label{pb:general}
\min_u f(u) + g(u),
\end{equation}
where $f$ is smooth, with gradient $\nabla f(u) = -\nabla \cdot (H^{-1} \nabla u) - \nabla \cdot v_0$, and $g$ is proximable:
\[\prox^\tau_g (u)(x) = \begin{dcases*}
u(x) - \tau a& if $u(x) < \tau a$,\\
0& if $\tau a \leq u(x) \leq \tau b$,\\
u(x) - \tau b& if $u(x) > \tau b$.
\end{dcases*}\]
We know how to compute the proximal operator and the gradient of $f$, since L-BFGS provides a simple method to compute the product $H^{-1} x$. Problems of the form \eqref{pb:general} with $f$ smooth (and computable gradient) and $g$ proximable can be tackled with first-order methods such as the proximal gradient method described in the previous section (also called ISTA) or a fast proximal gradient method called FISTA, introduced in \cite{BecTeb}. We opted for the latter, which is a slight modification of the proximal gradient method using an intermediary point:
\begin{equation}\tag{FISTA}\label{met:FISTA}
\left|\begin{aligned}
u_0 &\in H^1(\R^d),\\
\tilde{u}_n &= u_n + \lambda_n (u_n-u_{n-1}),\\
u_{n+1} &= \prox^\tau_g(\tilde{u}_n - \tau \nabla f (\tilde{u}_n)),
\end{aligned}\right.
\end{equation}
where $\lambda_n$ is given by some recursive formula (we refer to \cite{BecTeb} for the details). It enjoys a theoretical and pratical rate of convergence which is higher than ISTA and which is that of the classical gradient method:
\[f(u_n) - f_{opt} \leq \frac{2 L_f \abs{u_0 - u_{opt}}^2}{(n+1)^2}.\]

\subsection{Algorithms and numerical experiments}\label{sec:algo}

Following the work of \cite{OudSan}, we discretize our problem on a staggered grid : we divide the cube $Q = [-1,1]^2$ into $M^2$ subcubes of side $2/M$, the functions $U$ are defined at the center of the small cubes, while the $x$ component $V^x$ of a vector fields $V$ is defined on the vertical edges of the grid and the $y$ component $V^y$ on the horizontal edges of the grid. This is quite convenient to compute the discrete divergence of a vector field and the discrete gradient of a function.

\begin{itemize}
\item Unknowns: $(V^x_{i,j})_{\substack{1\leq i \leq M\\1\leq j \leq M+1}}$, $(V^y_{i,j})_{\substack{1\leq i \leq M+1\\1\leq j \leq M}}$, with
\[V^x_{1,j} = V^x_{M+1,j} = V^y_{i,1} = V^y_{1,M+1} = 0,\]
which means that $V$ is parallel to the boundary.
\item Objective function:
\[F(V) = \eps^{-\sigma_1} h^2 \sum_{i,j} N(\hat{V}_{i,j})^\sigma + \eps^{\sigma_2} h^2/2 \left( \sum_{i,j} \abs{\nabla_{i,j} V^x}^2 + \sum_{i,j} \abs{\nabla_{i,j} V^y}^2\right).\]
There are several definitions to give to make sense of $F$. First of all $N$ is a smooth approximation of the norm, of the form
\[N(x) = (\abs{x}^2 + \eps_s^2)^{1/2} \quad \text{ for $\eps_s$ small.}\]
The discrete vector field $\hat{V}_{i,j} = (\hat{V}^x_{i,j},\hat{V}^y_{i,j})$ is an interpolation of $(V^x,V^y)$ defined at the centers of the cubes:
\[\hat{V}^x_{i,j} = \frac{V^x_{i,j}+V^x_{i+1,j}}{2}, \quad \hat{V}^y_{i,j} = \frac{V^y_{i,j}+V^y_{i,j+1}}{2}, \quad 1 \leq i,j \leq M.\]
Finally the discrete gradient is defined as usual by
\begin{align*}
\nabla_{i,j} V^x &= ((V^x_{i,j+1}-V^x_{i,j})/h,(V^x_{i+1,j}-V^x_{i,j})/h),&
1 \leq i\leq M-1 &,1 \leq j\leq M,\\
\nabla_{i,j} V^y &= ((V^y_{i,j+1}-V^y_{i,j})/h,(V^y_{i+1,j}-V^y_{i,j})/h),&
1 \leq j\leq M-1 &,1 \leq i\leq M.\\
\end{align*}
\end{itemize}

We may now give the main algorithm and its sub-methods.

\begin{algorithm}[H]
\caption{Proximal L-BFGS for $F$}
\begin{algorithmic}
\State Data: tolerance $tol$, initial vector field $V_0$, step $\tau_0$, source $\delta$
\State $V \gets V_0$, $U \gets U_0$
\State \textbf{compute} $error$
  \While {$error > tol$}
    \State $\tau \gets \tau_0$
    \Repeat
      \State $G \gets \textsc{MultiplyBFGS}(\nabla F (V))$
      \State $V,U \gets \textsc{Project}(V - \tau G,U,\delta,\tau)$
      \State $\tau \gets \tau/2$
    \Until{$F(V)$ has decreased}
    \State \textbf{update} L-BFGS data
    \State \textbf{compute} $error$
  \EndWhile
\end{algorithmic}
\end{algorithm}
The update step for L-BFGS data consists in storing in $Y,Z,r$ the points and gradients of the $L$ previous steps, so that at step $n$:
\[Y_{L-k} = \nabla F(V_{n-k}) - \nabla F(V_{n-k-1}), \quad Z_{L-k} = V_{n-k} - V_{n-k-1}\]
for all $k = 0, \ldots, L-1$, and $r_k = 1/(Y_k \cdot Z_k)$ for all $k = 0, \ldots, L-1$.
Notice here that we do a simple backtracking line search by reducing the stepsize $\tau$ until the energy has decreased, for example until it has sufficiently decreased and satisfies the Armijo rule. Also, notice that the potential $U$ computed at step $n$ is used at the next step as initial data ; this trick extensively speeds up the computation of the projection. Finally, we took as error measurement some relative difference between two consecutive steps.

Now, as stated in Section \ref{sec:proj}, the projection on $C$ with respect to $\norm{\cdot}_H$ is computed via the FISTA method, as follows:

\begin{algorithm}[H]
\caption{Project $V_0$ on $C$ with respect to $\norm{\cdot}_H$}
\begin{algorithmic}
\State Data: tolerance $tol_p$, step $\tau_p$
\Function{Project}{$V_0,U_0,\delta,\tau$}
  \State $D_0 \gets \nabla \cdot V_0$
  \State $U \gets U_0$
  \While{$error > tol_p$}
    \State $t_p \gets t;\; t\gets (1+\sqrt{1+ 4 t_p^2})/2;\; s \gets (t_p-1)/t$
    \State $G \gets \textsc{MultiplyBFGS}(\nabla U)$
    \State $U_i \gets U + s(U-U_{old})$
    \State $U_{old} \gets U$
    \State $U \gets \textsc{Prox}(U_i - \tau_p(\nabla \cdot G-D_0),\delta,\tau)$
    \State \textbf{compute} $error$
  \EndWhile
  \State $V \gets V_0 + \textsc{MultiplyBFGS}(\nabla U)$
  \State \textbf{return} $V,U$
\EndFunction
\end{algorithmic}
\end{algorithm}

The \textsc{Prox} function is just the proximal operator associated with the discrete counterpart of $g : u \mapsto \int b u_+ - \int a u_-$ where $a = \delta - 1, b = \delta$. Thus $P = \textsc{Prox}(U,\delta,\tau)$ is defined by:
\[P_i = \begin{dcases*}
U_i - \tau (\delta_i -1)&
if $U_i < \tau(\delta_i -1)$,\\
0&
if $\tau(\delta_i -1) \leq U_i \leq \tau \delta_i$,\\
U_i - \tau \delta_i&
if $U_i > \tau\delta_i$.
\end{dcases*}\]

For the sake of completeness, we give a simple method to compute the L-BFGS multiplication $H^{-1} X$ (see \cite{LiuNoc,Noc} for details).
\begin{algorithm}[H]
\caption{L-BFGS multiplication $H^{-1} X$}
\begin{algorithmic}
\Function{MultiplyBFGS}{$X$}
  \State $G \gets X$
  \For{$ i = L, \ldots, 1$}
    \State $s_i \gets r_i Z_i \cdot G$
    \State $G \gets G - s_i Y_i$
  \EndFor
  \State $G \gets (Z_L \cdot Y) / (Y_L \cdot Y_L) \: G$
  \For{$ i = 1, \ldots, L$}
    \State $t \gets r_i Y_i \cdot G$
    \State $G \gets G + (s_i - t) Z_i $
  \EndFor
  \State \textbf{return} $G$
\EndFunction
\end{algorithmic}
\end{algorithm}

We present some numerical results obtained with $\eps_s = 10^{-4}$, on a $M \times M$ grid with $M = 201$ and $\eps = 3h$ where $h = 2/M$, the code being written in Julia. We have started with random initial values for $V$ and a smooth approximation $\delta$ of the Dirac $\delta_0$. After some days of computation on a standard laptop, one gets the following shapes and underlying networks.

With no surprise, the shape for $\alpha=0.85$ is rounder than those obtained for $\alpha=0.55$ and $\alpha=0.65$. These two are quite similar, but a simple zoom shows that the one with the smallest value of $\alpha$ is a slightly more irregular than the other. The corresponding irrigation networks are also coherent with the expected results: the branches have larger multiplicity (close to the origin) for smaller $\alpha$.

\begin{figure}[H]
    \centering
    \begin{subfigure}[t]{0.5\textwidth}
        \centering
        \includegraphics[width=\textwidth]{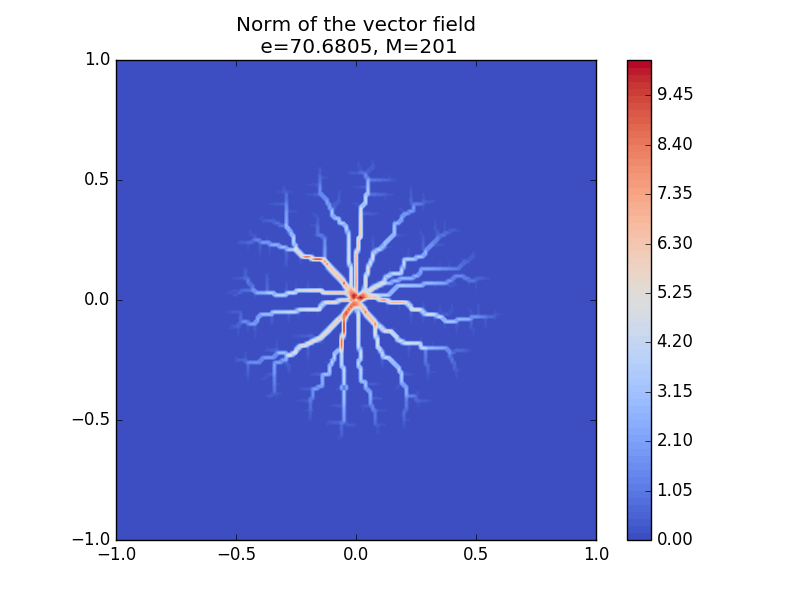}
        \caption{Norm of the vector field, $\alpha = 0.55$}
    \end{subfigure}%
    ~ 
    \begin{subfigure}[t]{0.5\textwidth}
        \centering
        \includegraphics[width=\textwidth]{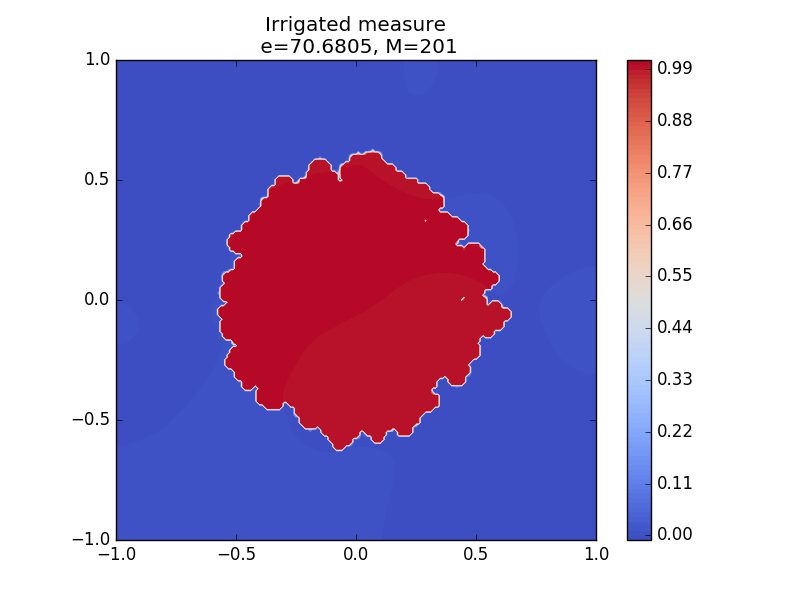}
        \caption{Irrigated measure, $\alpha = 0.55$}
    \end{subfigure}
    \\
        \begin{subfigure}[t]{0.5\textwidth}
        \centering
        \includegraphics[width=\textwidth]{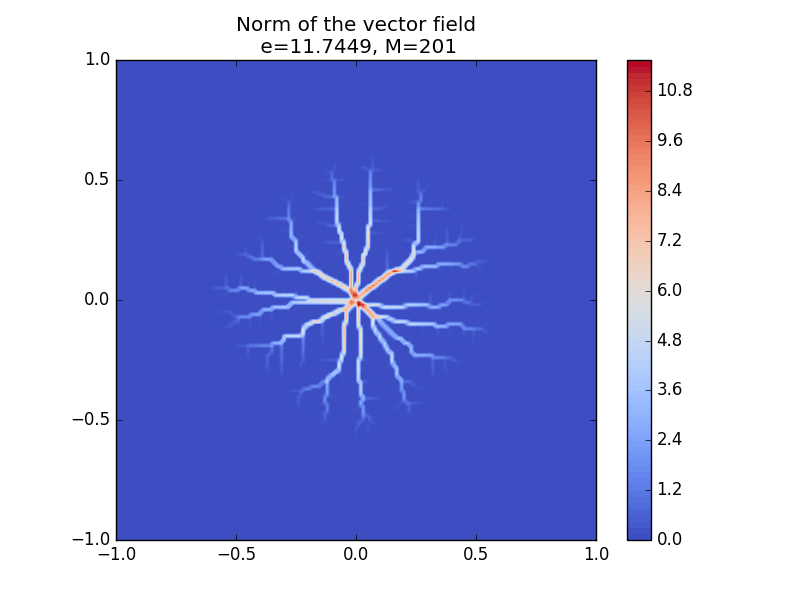}
        \caption{Norm of the vector field, $\alpha = 0.65$}
    \end{subfigure}%
    ~ 
    \begin{subfigure}[t]{0.5\textwidth}
        \centering
        \includegraphics[width=\textwidth]{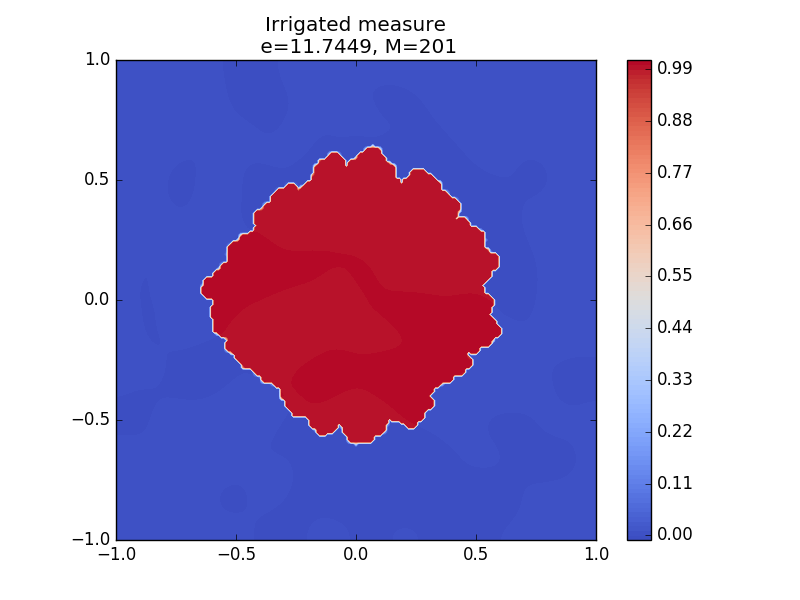}
        \caption{Irrigated measure, $\alpha = 0.65$}
    \end{subfigure}
    \\
        \begin{subfigure}[t]{0.5\textwidth}
        \centering
        \includegraphics[width=\textwidth]{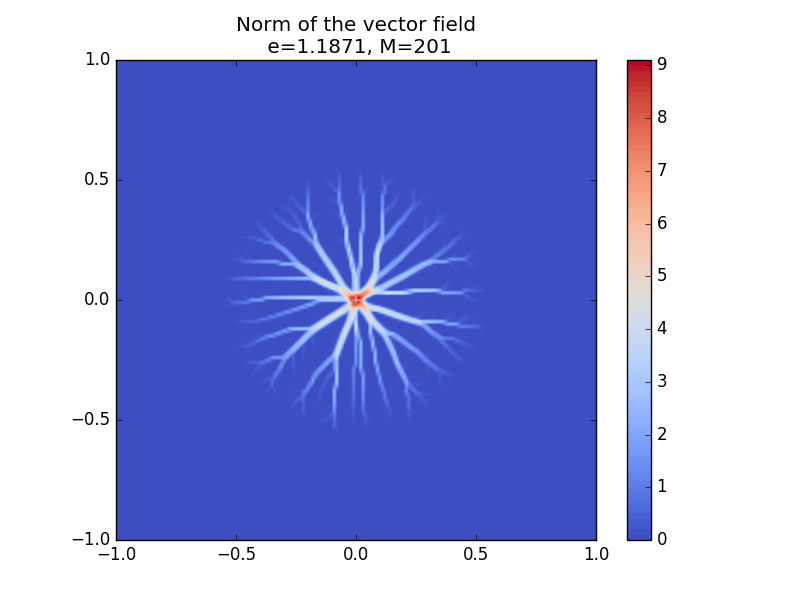}
        \caption{Norm of the vector field, $\alpha = 0.85$}
    \end{subfigure}%
    ~ 
    \begin{subfigure}[t]{0.5\textwidth}
        \centering
        \includegraphics[width=\textwidth]{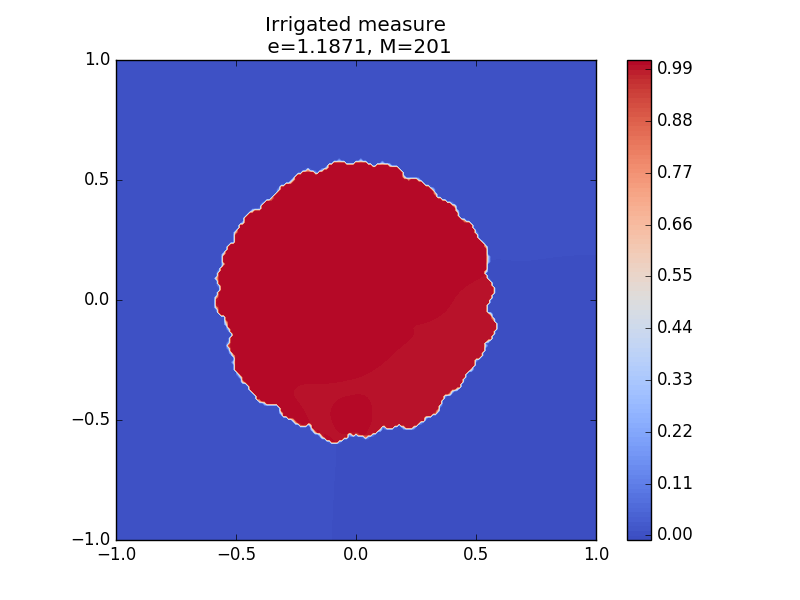}
        \caption{Irrigated measure, $\alpha = 0.85$}
    \end{subfigure}
    \caption{Algorithm output for different $\alpha$'s after $\sim 15 000$--$25 000$ iterations ($e$ stands for the computed optimal value, which is an approximation of $e_\alpha$, and $M$ for the number of discretization points on each side of the domain).}
\end{figure}

\noindent {\bf Acknowledgments and Conflicts of Interests.} The support of the ANR project ANR-12-BS01-0014-01 GEOMETRYA and of the PGMO project MACRO, of EDF and Fondation Math\'ematique Jacques Hadamard, are gratefully acknowledged. The work started during a visit of the second author to UC Davis, and also profited of a visit of the third author to Univ. Paris-Sud at Orsay, and both Mathematics Department are acknowledged for the warm hospitality.

The authors declare that no conflict of interests exists concerning this work.

\printbibliography 

\end{document}